\newtheorem{theorem}{Theorem}[section]
\newtheorem{question}[theorem]{Question}
\newtheorem{lemma}[theorem]{Lemma}
\newtheorem{corollary}[theorem]{Corollary}
\newtheorem{definition}[theorem]{Definition}
\newtheorem{constr}[theorem]{Construction}
\theoremstyle{definition}
\newtheorem{remark}[theorem]{Remark}
\newcommand{\GL}{\mathrm{GL}}
\newcommand{\diag}{\mathrm{diag}}
\newcommand{\C}{{\mathcal C}}
\newcommand{\Inc}{\mathrm{Inc}}
\newcommand{\soc}{\mathrm{Soc}}
\mathchardef\mhyphen="2D
\newcommand{\nona}{\mathrm{non\mhyphen}\alpha}
\newcommand{\nonb}{\mathrm{non\mhyphen}\beta}
\newcommand{\nong}{\mathrm{non\mhyphen}\gamma}
\newcommand{\non}{\mathrm{non\mhyphen}}
\newcommand{\Rep}{\mathrm{Rep}}
\newcommand{\RS}{\mathrm{RepSet}}
\newcommand{\nl}{\par \bigskip \noindent}
\DeclareMathOperator{\Aut}{Aut}
\DeclareMathOperator{\Sym}{Sym}
\newcommand{\PG}{\mathop{\mathrm{PG}}}
\newcommand{\PGammaL}{\mathop{\mathrm{P}\Gamma\mathrm{L}}}
\newcommand{\AGL}{\mathop{\mathrm{AGL}}}
\newcommand{\GF}{\mathop{\mathrm{GF}}}
\DeclareMathOperator{\Wr}{wr}
\newcommand{\ourgeoms}[2]{${#1}^{{#2}}$-uniform geometry}
\def\dotcup{\DOTSB\mathop{\overset{\textstyle.}\cup}}
\def\bigdotcup{\DOTSB\mathop{\overset{\textstyle.}\bigcup}}
\title[Basic coset geometries]{Basic coset geometries}
\author[Michael Giudici, Geoffrey Pearce and Cheryl E.~Praeger]{Michael Giudici, Geoffrey Pearce and Cheryl E.~Praeger\\ \\Centre for the Mathematics of Symmetry and Computation\\ 
School of Mathematics and Statistics\\
University of Western Australia\\
Crawley WA 6009 Australia \\ \\
Michael.Giudici@uwa.edu.au, geoffrey.pearce@graduate.uwa.edu.au, Cheryl.Praeger@uwa.edu.au}
\date{}
\begin{document}

\begin{abstract}
In earlier work we gave a characterisation of pregeometries which are `basic' (that is, admit no `non-degenerate' quotients) relative to two different kinds of quotient operations, namely imprimitive quotients and normal quotients. Each basic geometry was shown to involve a faithful  group action, which is  primitive or quasiprimitive respectively, on the set of elements of each type.  
For each O'Nan-Scott type of primitive group, we construct a new infinite family of geometries, which are thick and of unbounded rank, and which admit a flag-transitive automorphism group acting faithfully  on the set of elements of each type as a primitive group of the given O'Nan-Scott type.
\end{abstract}

\maketitle

\section{Introduction} \label{intro}
The technique of taking quotients has proved useful for studying various classes of combinatorial objects.  For instance, in graph theory it has been successful in characterising distance-transitive graphs \cite{SmithDH}, $s$-arc transitive graphs \cite{PraegerQP} and locally $s$-arc transitive graphs \cite{GLP3}.  Both \cite{CGP} and \cite{firstpaper} represent a recent effort to develop a framework for studying geometries and pregeometries using this technique.  In \cite{firstpaper} we gave a characterisation of pregeometries which are `basic' (that is, admit no `non-degenerate' quotients) relative to one of two different kinds of quotient operations, namely imprimitive quotients and normal quotients.  The purpose of this paper is to demonstrate by construction that basic pregeometries can have arbitrarily large rank. Furthermore, the examples we construct satisfy a number of restrictive geometric   conditions which are important in the field of incidence geometry; namely, they are geometries, flag-transitive, thick and  connected (in the sense of having connected rank $2$ truncations). See Subsection \ref{incgeoms} for definitions of the geometrical terminology.

Each basic pregeometry arises from a basic pregeometry involving a group of automorphisms which is faithful and primitive or quasiprimitive, according to the kind of quotient,  on the set of elements of each type. Such groups are categorised as having one of several `O'Nan-Scott types' (see Subsection~\ref{QPgroups}) and our constructions cover each O'Nan-Scott type. To set our main results Theorems~\ref{maintheorem} and \ref{maintheorem2} in context we give a brief pr\'ecis of our approach to studying incidence geometries and the questions that have arisen.

\subsection{Primitive and quasiprimitive groups and geometry quotients}

Geometries which are flag-transitive belong to the class of coset pregeometries (see Section \ref{classC}), and their structure can be defined in terms of intersections of cosets within a group. Constructing geometries via cosets goes back to the work of Tits \cite{Tits} and there is an extensive body of literature on constructing coset geometries from various almost simple groups (for example \cite{bueetal,deho94,psl2,Leemans,leemanssuz,RS}). As demonstrated in \cite[Example 6.8]{CGP}, the quotient of a flag-transitive geometry may be neither flag-transitive nor a geometry (as opposed to a {\em pre}geometry), and this ultimately precludes a self-contained quotient theory of flag-transitive geometries.  However, by relaxing these two conditions we end up with the class of {\em coset pregeometries with connected rank $2$ truncations}, which {\em is} closed under taking both normal and imprimitive quotients, and which is therefore more amenable to study by this process. We are then faced with the problem of describing the basic pregeometries, that is, those with no meaningful quotients.

The two main results of \cite{firstpaper} deal with characterising these basic pregeometries, in the context of imprimitive quotients in \cite[Theorem 1.1]{firstpaper} and normal quotients in \cite[Theorem 1.2]{firstpaper}. An \emph{imprimitive quotient} is a quotient with respect to a  partition of the set of elements of the geometry that is invariant under a group $G$ of automorphisms. If the partition is also the set of orbits of some normal subgroup $N$ of $G$ then it is called a \emph{normal quotient}.
The basic  coset pregeometries for a group $G$ relative to these two kinds of quotients are called \emph{$G$-primitive basic} and \emph{$G$-normal-basic}, and involve faithful primitive and quasiprimitive group actions respectively: a permutation group is {\em primitive} if it leaves no proper, non-trivial partition of the point set invariant, and is {\em quasiprimitive} if every non-trivial normal subgroup is transitive; the latter is a generalisation of the former.  In essence the two theorems in \cite{firstpaper} state the following for $G$-vertex-transitive pregeometries (here $X_i$ denotes the set of all elements of type $i$ -- see Section \ref{prelim}):

\begin{itemize}
\item[(1)] {\em The study of $G$-primitive-basic pregeometries is reduced to studying those pregeometries in which $G$ is faithful and primitive on each $X_i$}, \cite[Theorem 1.1]{firstpaper}.

\item[(2)] {\em The study of $G$-normal-basic pregeometries is reduced to studying those pregeometries in which $G$ is faithful on every $X_i$ and quasiprimitive on all but at most one of the $X_i$,} \cite[Theorem 1.2]{firstpaper}.
\end{itemize}

Following on from this reduction, we consider in this paper the question of whether the rank of a flag-transitive pregeometry satisfying these conditions is bounded.  The well-known example of a projective space shows that this is not the case in general, and furthermore is an example of a geometry which is thick and flag-transitive, and which can have arbitrarily large rank.  Hence even with these restrictions the rank is unbounded.  

Thus we refine the question according to the different O'Nan-Scott types of primitive or quasiprimitive permutation groups.  The different types of primitive groups are described in \cite{praegerbcc}, in which each type is represented by a $2$ letter abbreviation (see Section \ref{QPgroups} for an explanation of these).  Moreover, the third author showed in \cite{PraegerQP} that quasiprimitive groups admit a similar characterisation with a direct correspondence between the different types of quasiprimitive and primitive groups (in the sense that a primitive group of a given type is a quasiprimitive group of the same type).  Thus, when constructing examples of geometries satisfying the conditions in (1) preserved by a given O'Nan-Scott type of primitive group, we are also constructing examples for that type of quasiprimitive group satisfying the conditions in (2).  This leads us to ask the question as follows.
\begin{center} {\em
Given a particular O'Nan-Scott type of primitive group, is there an upper bound for the rank of any thick flag-transitive geometry preserved by a group of this type?}
\end{center}
The main result of this paper is that the answer is `no', and the answer is still `no' in the particular case of geometries where the actions of $G$ on the $X_i$ are permutationally isomorphic.  Let $\Gamma$ be a geometry and $G$ a permutation group on a set $\Omega$. We say that $\Gamma$ is a \emph{\ourgeoms{G}{\Omega}} if $\Gamma$ is a thick geometry with connected rank 2 truncations, $G$ is flag-transitive on $\Gamma$, and for any type in $\Gamma$ the action of $G$ on the subset of elements of that type is permutationally isomorphic to the $G$-action on $\Omega$ (and in particular is faithful).

\begin{theorem} \label{maintheorem}
Let $k$ be a positive integer.  Then for each of the eight O'Nan-Scott types of primitive permutation groups, there exists a primitive group $G$ of that type on a set $\Omega$ and a \ourgeoms{G}{\Omega} of rank $k$.
\end{theorem}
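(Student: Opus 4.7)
The plan is to realise each required geometry as a coset pregeometry
$\Gamma(G; H_1, \ldots, H_k)$ with $G$ primitive on $\Omega$ of the prescribed
O'Nan-Scott type and each $H_i$ a $G$-conjugate of the point stabiliser
$H := G_\omega$. By the coset-pregeometry framework recalled in Section \ref{classC},
the hypotheses of $G^\Omega$-uniformity translate into explicit algebraic
conditions on the chosen conjugates: factorisations $G = \langle H_i, H_j \rangle$
together with index conditions on $H_i H_j$ for connectedness of rank 2
truncations; intersection equalities of the shape
$H_i \cap \bigl(\bigcap_{j \in J} H_j\bigr) = \bigcap_{j \in J \cup \{i\}} H_j$
for every nonempty $J \subseteq \{1,\dots,k\}$ and every $i \notin J$
(flag-transitivity and the geometry condition); and appropriate index inequalities
for thickness. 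The task therefore reduces to exhibiting, for each O'Nan-Scott type
and each $k$, a primitive group $G$ of that type with $k$ mutually conjugate
subgroups satisfying all these conditions simultaneously.

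My strategy proceeds in two stages. I would first handle the almost-simple (AS)
case by exploiting the rich subgroup lattice of a sufficiently large almost
simple group. A natural source is $G = \Sym(n)$ with $n$ much larger than $k$,
acting primitively on some suitable $\Omega$ (for instance the $k$-subsets of an
$n$-set, or a set indexed by a carefully chosen combinatorial design). I would
pick a single subgroup $H = G_\omega$ together with a cyclic element
$\sigma \in G$ of order at least $k$, and set $H_i := H^{\sigma^{i-1}}$ for
$1 \le i \le k$. The presence of $\sigma$ as an outer symmetry gives
permutational equivalence of the $k$ coset actions for free, while the generous
size of $n$ relative to $k$ leaves ample room to arrange the required
factorisation and intersection conditions by elementary combinatorics.

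I would then obtain the remaining seven O'Nan-Scott types (HA, HS, HC, SD, CD,
TW, PA) from this AS example by a uniform lifting construction. Starting from
$\Gamma_0 = \Gamma(A; A_1, \ldots, A_k)$ with $A$ almost simple, I build $G$ as
an extension involving $A^m$, or of $\soc(A)^m$ together with a permutation group
acting on the $m$ factors, chosen as a product, wreath, or twisted wreath
product to realise the desired O'Nan-Scott type in accordance with the
classification in \cite{praegerbcc}. The new subgroups are diagonal or
top-extended copies of the $A_i$ in $G$; they remain mutually conjugate in $G$
and inherit the factorisation and intersection conditions diagonally from
$\Gamma_0$. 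For type HA I would instead use the affine group of a vector space
over a finite field, with the lifted subgroups realised as stabilisers of
affine flats compatible with the seed geometry.

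The principal obstacle is the AS stage itself: arranging the simultaneous
factorisation and intersection conditions across the exponentially many subsets
$J \subseteq \{1, \ldots, k\}$, while insisting that all $H_i$ remain $G$-conjugate,
is the combinatorial heart of the argument, and I would expect the bulk of the
technical work to lie there. Once that is settled, the seven lifts are
essentially O'Nan-Scott bookkeeping: the conjugacy and flag-transitivity
conditions transfer diagonally, thickness and connectedness are preserved by
the extension, and the O'Nan-Scott type of the resulting $G$ is read off from
the socle-based classification of primitive groups.
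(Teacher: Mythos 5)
Your general framing---realise each geometry as a coset pregeometry with $k$ mutually $G$-conjugate parabolics, and translate $G^\Omega$-uniformity into factorisation and intersection conditions on those parabolics---is sound and is essentially what the paper does. Your AS stage is also in the right spirit: the paper's Construction~\ref{ASconstruction} uses $G = S_m$ in its natural action, with parabolics the point stabilisers $\Sym(\Omega\setminus\{i\})$, which are indeed conjugate via transpositions. But the second stage, the claim that the other seven O'Nan-Scott types can be obtained from this one AS seed by a ``uniform lifting construction,'' contains a genuine gap.

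The essential problem is that primitive groups of types HS and SD are not product-action groups, and there is no wreath, product, or twisted-wreath construction that inflates an almost simple geometry $\Gamma(A; A_1,\ldots,A_k)$ into an HS or SD geometry. In an HS group $G = T\times T$ acting on $\Omega = T$, the point stabilisers are twisted-diagonal subgroups of the form $\{(t, x_i t x_i) : t\in T\}$ (see equation~(\ref{eq:stab})); in an SD group $T\Wr S_n$, they are conjugates of $\langle\diag(T^n), S_n\rangle$. Neither family can be recognised as ``diagonal or top-extended copies'' of the parabolics of an almost simple geometry, and the intersection conditions you need do not transfer from an AS example---they have to be verified from scratch by combinatorial analysis inside $T$. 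The paper does precisely this with the dedicated Constructions~\ref{HSconstruction} and~\ref{SDincreasing1s}, whose correctness rests on Lemmas~\ref{lem:containstrans} and~\ref{uniquerep}--\ref{SDdifficultorbitscor}: these amount to careful bookkeeping of products of transpositions in $A_m$ (for HS) and of coset representatives of minimal support in $T^n/\diag(T^n)$ (for SD), with no analogue in your AS seed.

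Even where genuine lifting does work, your blueprint needs adjustment. Wreathing an AS group gives type PA, not HC, CD, TW, or HA (Table~\ref{PAexpansions}); to reach those you must start the lift from a seed of type HS, SD, TW, or HA respectively. So you cannot get all seven remaining types from a single AS seed. The paper instead uses Construction~\ref{increasing1s}, which builds a geometry directly on $\Delta^n$ from the primitive component $H$ (of whatever type) without needing an $H$-geometry on $\Delta$ to begin with; this is what handles TW and HA, and its correctness again requires several pages of support-counting lemmas (\ref{difficultorbits2}--\ref{increasing1slem}), not an automatic ``diagonal inheritance.'' Finally, for HA your alternative suggestion of stabilisers of affine flats over a growing field runs into the constraint proved at the end of Section~\ref{mainthmproof}: for $G\leq\AGL(d,p)$ the rank of any $G^\Omega$-uniform geometry is at most $d+1$, so one must grow the dimension, not the field.

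In short: the proposal is on the right track for AS and captures the product-action lifting idea, but it fatally underestimates HS and SD (which need independent constructions, not lifts) and incorrectly assumes the lifting conditions transfer automatically and from a single AS seed.
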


Theorem \ref{maintheorem} is proved in Section \ref{mainthmproof}. We also show that for a given O'Nan-Scott type there is often a large amount of flexibility in how we can achieve a \ourgeoms{G}{\Omega} of rank $k$. In particular, $G$ has a minimal normal subgroup of the form $T^n$ for some simple group $T$ and integer $n$, and we can often achieve arbitrarily large rank by varying either $T$ or $n$.

Although in this paper we focus on constructing geometries in which the primitive action is of the same O'Nan-Scott type on each set $X_i$ it is possible for a group to have faithful primitive actions of more than one  O'Nan-Scott type. Indeed there exist a group $G$ which is flag-transitive on a thick geometry of rank 3 satisfying the conditions in (1) such that, for $i\neq j$, $G^{X_i}$ and $G^{X_j}$ are primitive of different O'Nan-Scott types. Such geometries are discussed briefly in Section \ref{differenttypes}.

As part of the paper we give several constructions of flag-transitive geometries that, to our knowledge, are new. For the geometries in Construction \ref{increasing1s}, we determine the basic diagram in Theorem \ref{thm:proddiag}; for the geometries in Constructions \ref{HSconstruction} and \ref{SDincreasing1s} we observe that the automorphism groups are different from the automorphism groups of any other family of geometries of unbounded rank known to us (see Remarks \ref{rem:HSdiag} and \ref{rem:SDdiag}). A key ingredient of many of these constructions is a generic construction (see Section \ref{genericconstr}) for coset pregeometries with connected rank 2 truncations which is analogous to the construction of orbital graphs in graph theory (just as coset pregeometries are analogous to coset graphs).  

\begin{theorem} \label{maintheorem2}
Let $\Gamma$ be a coset pregeometry of rank $k$ with connected rank $2$ truncations, and let $i$ be a type in $\Gamma$.  Then $\Gamma$ arises by applying Construction {\em \ref{balloon}} recursively $k-1$ times starting with a rank $1$ pregeometry whose elements form the set of type $i$ elements of $\Gamma$.
\end{theorem}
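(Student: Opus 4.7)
The plan is to proceed by induction on the rank $k$, at each step identifying a single type that can be ``added back'' by the balloon construction. The base case $k=1$ is immediate, since the rank $1$ pregeometry on the elements of type $i$ is $\Gamma$ itself and no applications of Construction \ref{balloon} are required.

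For the inductive step, suppose $k\geq 2$ and the result holds for coset pregeometries of rank $k-1$ with connected rank $2$ truncations. Fix any type $j\neq i$ in $\Gamma$ and let $\Gamma'$ be the truncation of $\Gamma$ obtained by deleting the set $X_j$ of type $j$ elements. Then $\Gamma'$ has rank $k-1$ and still contains type $i$. The first key sub-step is to verify that $\Gamma'$ is itself a coset pregeometry with connected rank $2$ truncations: the underlying group-coset description of $\Gamma$, restricted to the remaining types, directly witnesses the coset pregeometry structure on $\Gamma'$; and every rank $2$ truncation of $\Gamma'$ is also a rank $2$ truncation of $\Gamma$ (namely one that avoids type $j$), hence connected by hypothesis on $\Gamma$. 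The inductive hypothesis then applies to $\Gamma'$ and yields the claimed description by $k-2$ applications of Construction \ref{balloon}.

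The second key sub-step is to verify that a single further application of Construction \ref{balloon} to $\Gamma'$ recovers $\Gamma$. Here we feed to the construction the subgroup of the underlying group corresponding to the stabiliser of a chosen type $j$ element of $\Gamma$. One then checks that the type-$j$ elements produced by the construction are in bijection with the type $j$ elements of $\Gamma$ via the correspondence of left cosets with the orbit of the chosen element, and that incidence with elements of the other types agrees with incidence in $\Gamma$, because in both settings it is defined by the same non-empty coset-intersection condition described in Section \ref{classC}. Concatenating with the $k-2$ applications from the inductive step gives the required $k-1$ applications starting from a rank $1$ pregeometry on the type $i$ elements.

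The principal obstacle is the second sub-step: matching incidence, not merely element sets, between the output of Construction \ref{balloon} and $\Gamma$ on the new type. A secondary subtlety is that one must choose an ordering of the types to be added, starting from type $i$, in such a way that every intermediate truncation remains a coset pregeometry with connected rank $2$ truncations; this is not an issue because, as in the sub-step above, the relevant rank $2$ truncations at any intermediate stage form a subset of those of $\Gamma$, and the group-coset data also restricts at each stage. Once these two points are settled, the induction closes and the theorem follows.
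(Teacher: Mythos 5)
Your proof is correct and follows essentially the same route as the paper: the paper isolates the key inductive step as Theorem~\ref{reverseballoonthm}, which says that truncating away one type and then applying Construction~\ref{balloon} with the appropriate input recovers $\Gamma$, and Theorem~\ref{maintheorem2} is stated as a one-line corollary. Your argument carries out the same induction but verifies the matching of incidence directly in the language of coset intersections rather than via the orbit identity $\Gamma_x \cap X_j = y^{G_x}$ used in the proof of Theorem~\ref{reverseballoonthm}; these are equivalent descriptions given the characterisation of $\mathcal{C}$ in Subsection~\ref{classC}.
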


In addition, we identify the conditions under which the generic construction yields a flag-transitive geometry.  This generic construction forms the basis of the constructions in Sections \ref{PAtype} to \ref{SDtype} which are used to build many of the examples of basic geometries needed for the proof of Theorem \ref{maintheorem} in Section \ref{mainthmproof}.

\subsection*{Acknowledgements}
The authors thank an anonymous referee for suggestions that improved the paper and Philippe Cara for helpful discussions.

\section{Preliminaries} \label{prelim}
\subsection{Pregeometries} \label{incgeoms}
A {\em pregeometry} $\Gamma = (X,\ast,t)$ consists of a set $X$ of {\em elements} (often called {\em points}) with an {\em incidence relation} $\ast$ on the points, and a map $t$ from $X$ onto a set $I$ of {\em types}.  The incidence relation is symmetric and reflexive, and if $x \ast y$ we say that $x$ and $y$ are {\em incident}.  Furthermore if $x \ast y$ with $x \neq y$ then $t(x) \neq t(y)$.  For each $i \in I$, we use the notation $X_i$ to mean the set $t^{-1}(i)$ of all elements of type $i$.  It follows that $X$ is the disjoint union $\bigcup_{i \in I} X_i$. The number $|I|$ of types is called the {\em rank} of the pregeometry, and we assume throughout the paper that $|X|$, and hence $|I|$, is finite.  Unless stated otherwise, the set $I$ for a rank $k$ pregeometry is equal to $\{1,\ldots,k\}$. 
A {\em flag} $F$ is a set of pairwise incident elements of $\Gamma$ (which implies that the elements of $F$ are of pairwise distinct types). The \emph{rank} of a flag is the number of elements that it contains.  A {\em chamber} is a flag containing one element of each type.  A pregeometry in which every flag is contained in a chamber is called a {\em geometry}, and a geometry in which each non-maximal flag is contained in at least two (respectively three) chambers is called {\em firm} (respectively {\em thick}).  

For a nonempty subset $J \subseteq I$, the \emph{$J$-truncation} of $\Gamma$ is the pregeometry $(X_J,\ast_J,t_J)$ where $X_J = t^{-1}(J)$, $\ast_J$ is the restriction of $\ast$ to $X_J$, and $t_J$ is the restriction of $t$ to $X_J$. 

The \emph{incidence graph} of a pregeometry  $\Gamma = (X,\ast,t)$ is the graph with vertex set $X$ and two elements $x,y\in X$ are adjacent if $x*y$ and $x\neq y$.
A pregeometry $\Gamma$ is said to be \emph{connected} if for any two elements $x,y\in X$ there is a sequence $x=x_1,x_2\ldots,x_{k-1},x_k=y$ with $x_i\ast x_{i+1}$, that is, the incidence graph is connected. We say that $\Gamma$ has \emph{connected rank $2$ truncations} if for each $i,j\in I$ with $i\neq j$, the $\{i,j\}$-truncation is connected.

Let $\Gamma=(X,\ast,t)$ be a pregeometry and let $\mathcal{B}$ be a partition of $X$ such that for each $B\in\mathcal{B}$, all elements of $B$ have the same type. The \emph{quotient pregeometry} of $\Gamma$ with respect to $\mathcal{B}$ is the pregeometry whose elements are the parts of $\mathcal{B}$ and two parts $B_1,B_2$ are incident if there exists $x_1\in B_1$ and $x_2\in B_2$ such that $x_1*x_2$ in $\Gamma$. The type function is inherited from $\Gamma$.

Let $\Gamma = (X,\ast,t)$ be a pregeometry.  An {\em automorphism} of $\Gamma$ is a permutation $g$ of $X$ such that $t(x) = t(x^g)$ for all $x \in X$, and $x^g \ast y^g$ if and only if $x \ast y$, for all $x, y \in X$.  We write $\Aut\Gamma$ for the automorphism group of $\Gamma$.  

Let $G \leq \Aut\Gamma$. For $i \in I$ we write $G_{(X_i)}$ for the kernel of the action of $G$ on $X_i$, and $G^{X_i}$ for the group induced by $G$ on $X_i$ (isomorphic to $G/G_{(X_i)}$).  For $x_i \in X_i$, we write $x_i^G$ for the orbit of $x_i$ under $G$.

For a subset $J$ of the type set $I$, we say that $G$ is \emph{$J$-flag-transitive} on $\Gamma$ if $G$ acts 
transitively on the set of all flags $F$ with $t(F) = J$. We say that $G$ is \emph{vertex-transitive} on $\Gamma$ if $G$ is $J$-flag-transitive on $\Gamma$ for all $J$ with $|J|=1$, \emph{incidence-transitive} on $\Gamma$ if $G$ is $J$-flag-transitive for all $J$ with $|J|=2$, and \emph{chamber-transitive} on $\Gamma$ if $G$ is $I$-flag-transitive. If $G$ is $J$-flag-transitive for all $J\subseteq I$ we say that $G$ is \emph{flag-transitive} on $\Gamma$.  If $\Aut(\Gamma)$ is flag-transitive on $\Gamma$ we often simply say that  $\Gamma$ is \emph{flag-transitive}. If $\Gamma$ is a geometry then $G$ is chamber-transitive on $\Gamma$ if and only if $G$ is flag-transitive on $\Gamma$.

Let $F$ be a flag of a pregeometry $\Gamma=(X,\ast,t)$. The \emph{residue} of $F$ in $\Gamma$, denoted $\Gamma_F$, is the pregeometry $(X_F,\ast_F,t_F)$ induced by $\Gamma$ on the set $X_F$ of all elements of $X$ incident with each element of $F$ and whose type is not in $t(F)$. If $\Gamma$ is a geometry then so is the residue $\Gamma_F$. The rank two residues are $\Gamma$ are the residues of flags of rank $|I|-2$.

Given a geometry $\Gamma$ with type set $I$, the \emph{basic diagram} of $\Gamma$ is the graph with vertex $I$ such that two types $i,j$ are adjacent if and only if there is a rank two residue $\Gamma_F$ of type $\{i,j\}$ (that is, a residue of a flag $F$ of type $I\backslash\{i,j\}$), for which the graph induced on $X_F$ by the incidence relation is not complete bipartite. Note that if $G$ is flag-transitive  on $\Gamma$ then all residues of a given type are isomorphic. Sometimes we decorate the basic diagram by placing labels on each edge to describe the particular residue.

\subsection{Primitive and quasiprimitive permutation groups} \label{QPgroups}
Finite primitive permutation groups, and also finite quasiprimitive groups can be divided into a number of distinct classes according to their action and the structure of their socle; see for example \cite[Chapter 4]{DM} and \cite{praegerbcc} (the socle of a group is the subgroup generated by all its minimal normal subgroups).  We follow the class division given in \cite{praegerbcc}, which we outline in Table \ref{primgroups}. Table \ref{primgroups} also gives a defining condition enabling the O'Nan-Scott type to be identified (additional conditions are sometimes required for primitivity).

\begin{table}[ht*]
\begin{tabular}{cll}
\hline
Abbreviation & O'Nan-Scott type& Defining property\\
\hline
HS & Holomorph simple & \parbox[t]{2.5in}{Two minimal normal subgroups, each isomorphic to $T$}\\
& & \\
HC & Holomorph compound & \parbox[t]{2.5in}{Two minimal normal subgroups, each isomorphic to $T^n$ for some $n \geq~2$}\\
& & \\
HA & Affine (abelian holomorph) & \parbox[t]{2.5in}{Abelian minimal normal subgroup}\\
& & \\
AS & Almost simple & \parbox[t]{2.5in}{Unique minimal normal subgroup isomorphic to $T$}\\
& & \\
SD & Simple diagonal & \parbox[t]{2.5in}{Unique minimal normal subgroup isomorphic to $T^n$ which acts non-regularly and $(T^n)_\alpha \cong T$ with $n\geq2$}\\
& & \\
CD & Compound diagonal & \parbox[t]{2.5in}{Unique minimal normal subgroup isomorphic to $T^n$, and $(T^n)_\alpha$ isomorphic to $T^\ell$ for some $\ell \geq 2$}\\
& & \\
TW & Twisted wreath & \parbox[t]{2.5in}{Unique minimal normal subgroup isomorphic to $T^n$ which acts regularly, for some $n\geq2$}\\
& & \\
PA & Product action & \parbox[t]{2.5in}{Unique minimal normal subgroup isomorphic to $T^n$ for some $n\geq 2$, $(T^n)_{\alpha}\neq 1$ and $(T^n)_{\alpha}\not\cong T^\ell$ for any $\ell\geq 1$.}\\
\hline
\end{tabular}
\caption{The different kinds of primitive and quasiprimitive permutation groups on $\Omega$. Here $T$ is a nonabelian simple group and $\alpha\in\Omega$.} \label{primgroups}
\end{table}

\subsubsection{Primitive groups acting in product action} \label{QPPA}
Although the name `Product Action' is reserved for the PA type of groups described in Table \ref{primgroups}, primitive groups of O'Nan-Scott types HC, CD, and TW, and certain primitive groups of type HA, can also be viewed as groups acting in product action (differing from the PA groups in the structure of their socles and point stabilisers).

The product action of a wreath product $G = H \Wr S_n = H^n \rtimes S_n$ is the action on $\Omega=\Delta^n$ (where $H$ acts on $\Delta$) given by 
$$(\delta_1,\ldots,\delta_n)^{(h_1,\ldots,h_n) \sigma^{-1}} = (\delta_{1^{\sigma}}^{h_{1^\sigma}},\ldots,\delta_{n^{\sigma}}^{h_{n^\sigma}})$$
for all $(\delta_1,\ldots,\delta_n) \in \Omega$ and all $(h_1,\ldots,h_n) \sigma^{-1} \in G$.  Such a group is primitive when the group $H$, called the component of $G$, is primitive but not regular on $\Omega$, see \cite[Theorem 4.5]{CameronBook}.
The O'Nan-Scott types of $G$ corresponding to various O'Nan-Scott types of $H$ are listed in Table \ref{PAexpansions}.

\begin{table}
\begin{tabular}{|c|c|c|c|c|c|c|c|c|}
\hline
Type of $H$ & AS & SD & HS & HA & TW & CD & HC & PA\\
\hline
Type of $G$ & PA & CD & HC & HA & TW & CD & HC & PA\\
\hline
\end{tabular}
\caption{O'Nan-Scott types of primitive groups $G = H \Wr S_n$ according to the O'Nan-Scott type of the component $H$.} \label{PAexpansions}
\end{table}

This observation is important since it enables us to use a single construction (in Section \ref{PAtype}) to build geometries of arbitrary rank preserved by primitive groups of types PA, CD, HC, HA or TW, and this forms part of the proof of Theorem \ref{maintheorem}.

\section{A generic construction and proof of Theorem \ref{maintheorem2}} \label{genericconstr}

Here we give a generic construction for pregeometries lying in a certain family $\C$  which contains all the $G^\Omega$-uniform geometries (as defined in the introduction). First we define $\C$.

\subsection{The class $\C$ of pregeometries} \label{classC}
Given a group $G$ with a set of subgroups $\{G_i\}_{i \in I}$, the {\em coset pregeometry} is the pregeometry whose elements of type $i \in I$ are the right cosets of $G_i$ in $G$, such that two cosets $G_ix$ and $G_jy$ are incident if and only if $G_ix \cap G_jy$ is nonempty.  The coset pregeometry is denoted by $\Gamma(G,\{G_i\}_{i \in I})$.

Let $\C$ be the set of pregeometries $\Gamma = (X,\ast,t)$ which are isomorphic to coset pregeometries with connected rank $2$ truncations.  Then $\C$ is the set of all pregeometries such that all of the following hold \cite[Theorem 6.7]{CGP}.
\begin{itemize}
\item[(i)]  $\Aut\Gamma$ is vertex-transitive and incidence-transitive on $\Gamma$.
\item[(ii)] $\Gamma$ contains a chamber.
\item[(iii)] The rank $2$ truncations of $\Gamma$ are connected.
\end{itemize}
It is clear from the definition that the class $\C$ contains all $G^{\Omega}$-uniform geometries.
As explained in the introduction, we are interested in the set $\C$ because it is closed under taking normal and imprimitive quotients.  That is, given $\Gamma \in \C$ and a $G$-invariant partition ${\mathcal P}$ of $\Gamma$ such that for each $P\in\mathcal{P}$ all elements of $P$ are of the same type, the quotient $\Gamma_{/ \mathcal P}$ also lies in $\C$ \cite[Theorem 1.5]{CGP}.

\subsection{The construction}
Essentially, we show how to add a new type and corresponding set of elements to an existing pregeometry in $\C$ to create a new pregeometry whose rank is therefore one greater than the old one.  It is a generic construction because (as we show in Theorem \ref{reverseballoonthm}) any pregeometry in $\C$ can be built up starting from one set $X_1$ and then successively adding sets $X_i$ using the construction.
\begin{constr} \label{balloon}
{\em We list the input and output for the construction.

\nl
Input: $\Gamma' = (X',\ast',t')\in\mathcal{C}$ of rank $k'$ with set of types $I'$;\\
\indent $G \leq \Aut\Gamma'$\\
\indent $K' = \{x_i \, | \, i \in I'\}$, a chamber in $\Gamma'$ with $t'(x_i) = i$ for each $i \in I'$;\\
\indent a set $Y$ on which $G$ acts transitively, an element $y \in Y$, and a symbol $j \not\in I'$.

\nl
Output:
$\Inc(\Gamma',G,K',Y,y,j)$ with properties given in Lemma \ref{balloonlem}

\nl
Here $\Inc(\Gamma',G,K',Y,y,j)$ is the incidence structure $(X,\ast,t)$ with point set $X = X' \cup Y$, set of types $I' \cup \{j\}$, type function $t : x \longmapsto t'(x)$ if $x \in X'$, and $t : x \longmapsto j$ if $x \in Y$, and reflexive, symmetric incidence relation $\ast$ defined as follows.  For each $x_i \in K'$ we define incidence between points in $Y$ and in $(X')_i$ such that the points in $Y$ incident to $x_i$ are those in the orbit of $y$ under $G_{x_i}$, and the other incident pairs (between $Y$ and $(X')_i$) are the images under $G$ of those incident pairs containing $x_i$.  That is to say, we define $\ast$ on $X' \times Y$ by
$$x_i^g \ast w^g \; {\rm for \; all} \; \; w \in y^{G_{x_i}}, g \in G, {\rm and} \; x_i \in K'.$$
Otherwise, for $u$, $v$ both in $X'$, we set $u \ast v$ if and only if $u \ast' v$.
}
\end{constr}

\begin{lemma} \label{balloonlem}
Let $\Gamma'$, $G$, $K'$, $Y$, $y$, $k'$ and $j$ be as in Construction {\em \ref{balloon}}, and let $\Gamma = \Inc(\Gamma',G,K',Y,y,j)=(X,*,t)$.  Then the following all hold.
\begin{itemize}
\item[(a)] $\Gamma$ is a pregeometry of rank $k' + 1$, $K' \cup \{y\}$ is a chamber of $\Gamma$, $G \leq \Aut\Gamma$,  and $G$ is vertex-transitive and incidence-transitive on $\Gamma$.  
\item[(b)] The rank $2$ truncations of $\Gamma$ are connected (that is to say, $\Gamma$ is in $\C$) if and only if, for all $x_i \in K'$, $\langle G_{x_i}, G_y \rangle = G$.
\end{itemize}
\end{lemma}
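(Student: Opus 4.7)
The plan is to verify part (a) by unpacking Construction~\ref{balloon} directly, and to reduce part (b) to a standard bipartite-graph connectedness argument.

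For (a), the relation $\ast$ agrees with $\ast'$ on $X' \times X'$, while on $X' \times Y$ it is defined as the smallest $G$-invariant symmetric relation containing the pairs $(x_i, w)$ for $x_i \in K'$, $w \in y^{G_{x_i}}$. Hence $G \leq \Aut \Gamma$ by construction, and since every incident pair involves two elements of distinct types (one in $I'$, the other equal to $j \notin I'$), $\Gamma$ is indeed a pregeometry; its type set is $I' \cup \{j\}$, of size $k' + 1$. Taking $w = y$ gives $x_i \ast y$ for each $x_i \in K'$, so $K' \cup \{y\}$ is a chamber. Vertex-transitivity of $G$ on $\Gamma$ follows from that on $\Gamma' \in \C$ combined with transitivity of $G$ on $Y$. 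For incidence-transitivity on type pairs $\{i,j\}$ with $i \in I'$, I would show that the defining incident pairs form a single $G$-orbit, namely $(x_i, y)^G$ under the diagonal action: any defining pair $(x_i^g, w^g)$ with $w = y^k$ and $k \in G_{x_i}$ equals $(x_i^{kg}, y^{kg})$. Incidence-transitivity on type pairs contained in $I'$ is inherited from $\Gamma'$.

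For (b), each $\{i_1, i_2\}$-truncation with $i_1, i_2 \in I'$ coincides with the corresponding truncation of $\Gamma'$, hence is connected because $\Gamma' \in \C$; so only the $\{i, j\}$-truncations for $i \in I'$ need attention. Such a truncation is a bipartite graph on $X_i \cup Y$ whose edge set is the single $G$-orbit $(x_i, y)^G$ identified above. The neighbours of $x_i$ are then $y^{G_{x_i}}$ and those of $y$ are $x_i^{G_y}$, so by induction on the length of alternating walks from $x_i$ the connected component of $x_i$ inside $X_i$ is exactly $x_i^H$, where $H := \langle G_{x_i}, G_y \rangle$. The truncation is connected if and only if $H$ is transitive on $X_i$; since $G$ is transitive on $X_i$ with stabiliser $G_{x_i}$ this is equivalent to $G = H G_{x_i}$, and since $G_{x_i} \leq H$ this simplifies to $G = H$.

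The main technical step is recognising that the bipartite edge set is a single $G$-orbit; once this is in hand, both incidence-transitivity in (a) and the alternating-walk closure in (b) follow immediately. A secondary care will be ensuring that no unintended incidences arise (between two elements of $Y$, or between two elements of $X'$ of the same type in $I'$), which is ruled out by the definition of $\ast$ together with the type-distinctness condition imposed on incident pairs of distinct elements.
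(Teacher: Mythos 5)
Your proof is correct and takes essentially the same route as the paper: part (a) by direct inspection of the construction, and part (b) by reducing each mixed truncation to the connectedness criterion for a $G$-edge-transitive bipartite graph. The only cosmetic difference is that the paper delegates the bipartite-connectedness equivalence to a cited lemma (\cite[Lemma 3.7(1)]{GLP3}) whereas you reprove it inline via the alternating-walk/double-coset argument; that is the standard proof of exactly that lemma.
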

\begin{proof}
It follows from the definition of $\Gamma$ that $\Gamma$ is a pregeometry of rank $k'+1$. Also the elements of $K' \cup \{y\}$ all have distinct types under $t$, and $y*y'$ for each $y'\in K'$, so $K' \cup \{y\}$ is a chamber of $\Gamma$.  That $G \leq \Aut\Gamma$ follows from the definition of $\ast$, and $G$ is transitive on each $X_i$ and acts incidence-transitively on $\Gamma$.  Let $i$ be a type not equal to $j$.  The incidence graph of the $\{i,j\}$-truncation has edge set equal to $\{x_i,y\}^G$, where $x_i$ is the type $i$ point in $K'$; hence it is connected if and only if $\langle G_{x_i}, G_y \rangle = G$ (see for example \cite[Lemma 3.7(1)]{GLP3}).
\end{proof}

Next we show that pregeometries satisfying properties (i) and (ii) of the definition of $\C$ in Subsection~\ref{classC} can be `decomposed' via Construction~\ref{balloon}.
In the proof we use the notation $\Gamma_x$, for $x \in X$, to denote the set $\{y \in X \,  | \, x \ast y, y\neq x\}$.

\begin{theorem} \label{reverseballoonthm}
Let $\Gamma = (X,\ast,t)$ be a rank $k$ pregeometry with type set $I$, and $G \leq \Aut\Gamma$ such that
\begin{itemize}
\item[(a)] $G$ is vertex-transitive and incidence-transitive on $\Gamma$, and
\item[(b)] $\Gamma$ contains a chamber $K$.
\end{itemize}
Let $j \in I$, let $y$ be the unique point of $K$ in $X_j$, and let $K' = K \backslash \{y\}$.  Let $\Gamma'$ be the $(I \backslash \{j\})$-truncation of $\Gamma$.  Then $\Gamma = \Inc(\Gamma',G,K',X_j,y,j)$ as defined in Construction {\em \ref{balloon}}.
\end{theorem}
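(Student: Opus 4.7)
My plan is to show that $\Inc(\Gamma',G,K',X_j,y,j)$ coincides with $\Gamma$ on its point set, its type function, and its incidence relation. The point set and type function match immediately: the point set of $\Gamma'$ is $X \setminus X_j$, so $\Inc$ is built on $(X \setminus X_j) \cup X_j = X$; its type function is $t$ restricted to $X \setminus X_j$ together with the constant $j$ on $X_j$, which is precisely $t$. Incidence within $X \setminus X_j$ is inherited directly from $\Gamma'$, which is the restriction of $\ast$ to $(X \setminus X_j)^2$. Thus only the incidence between $X \setminus X_j$ and $X_j$ requires checking.

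The key observation is that, for each $x_i \in K'$, the elements of $X_j$ that are $\Gamma$-incident to $x_i$ are precisely the orbit $y^{G_{x_i}}$. Because $K$ is a chamber we have $x_i \ast y$, so $(x_i,y)$ is an incident pair of type $\{i,j\}$; by incidence-transitivity $G$ acts transitively on all incident pairs of this type. Hence for any $z \in X_j$ with $x_i \ast z$ there exists $g \in G$ with $(x_i,y)^g = (x_i,z)$, which forces $g \in G_{x_i}$ and $z \in y^{G_{x_i}}$. The reverse inclusion holds by $G$-invariance of incidence, since $G_{x_i}$ fixes $x_i$ and preserves $\ast$.

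To finish, note that by vertex-transitivity every $u \in X \setminus X_j$ has the form $x_i^g$ for some $i \in I \setminus \{j\}$ and some $g \in G$, and $G$-invariance of $\ast$ gives $u \ast z$ in $\Gamma$ if and only if $x_i \ast z^{g^{-1}}$, if and only if $z^{g^{-1}} \in y^{G_{x_i}}$, if and only if $z \in (y^{G_{x_i}})^g$; this is exactly the rule $x_i^g \ast w^g$ with $w \in y^{G_{x_i}}$ specified by Construction \ref{balloon}. So the two incidence relations coincide and $\Gamma = \Inc(\Gamma',G,K',X_j,y,j)$. I do not foresee a real obstacle: the argument is essentially unpacking the construction against the incidence-transitivity hypothesis, with the only delicate point being to invoke incidence-transitivity on the rank-two flag type $\{i,j\}$ rather than merely on the vertex orbits given by vertex-transitivity.
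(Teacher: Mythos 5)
Your proof is correct and follows essentially the same route as the paper's: both arguments first use incidence-transitivity to identify $\Gamma_{x_i}\cap X_j$ with $y^{G_{x_i}}$ for $x_i\in K'$, then use vertex-transitivity (transitivity on each $X_i$) together with $G$-invariance of incidence to propagate this description to all of $X\setminus X_j$. The only superficial difference is that you spell out the bookkeeping about the point set and type function, which the paper dismisses with ``we have to prove only that the incidence in $\Gamma$ is the same as that specified in Construction~\ref{balloon}.''
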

\begin{proof}
We have to prove only that the incidence in $\Gamma$ is the same as that specified in Construction \ref{balloon}.   Let $x \in K'$.  Since $K$ is a chamber we know that $y \ast x$.  Also since $G$ is incidence-transitive on $\Gamma$, $\Gamma_x \cap X_j = y^{G_x}$.  The group $G$ is transitive on each $X_i$ and so for any $x'\in X$ such that $t(x') = t(x)$, there is $g \in G$ with $x^g = x'$.  Since $G$ preserves incidence it follows that $\Gamma_{x'} = \Gamma_x^g$; that is, $x' \ast (y')^g$ for all $g \in G$ and $y' \in y^{G_{x}}$.  Since each $x' \in X \backslash X_j$ arises for some $x \in K'$, it follows that $\Gamma = \Inc(\Gamma',G,K',X_j,y,j)$ as defined in Construction \ref{balloon}.
\end{proof}

Theorem \ref{maintheorem2} now follows as a corollary to Theorem \ref{reverseballoonthm}.

Next we give necessary and sufficient conditions under which $\Gamma=\Inc(\Gamma',G,K',Y,y,j)$ is a geometry and $G$ is flag-transitive on $\Gamma$. Part (b) is a reinterpretation of the condition for a coset pregeometry to be a geometry (see \cite{deho94} or \cite[p79]{IncGeomHB})) to our recursive construction.

\begin{lemma} \label{balloonchambtrans}
Suppose that $G$ is chamber-transitive on the pregeometry $\Gamma'$ in $\C$ with chamber $K'$, and suppose that $G$ acts transitively on a set $Y$.  Let $y \in Y$, let $j$ be a symbol not in the set of types for $\Gamma'$, and let $\Gamma = \Inc(\Gamma',G,K',Y,y,j)$ as in Construction {\em \ref{balloon}}.  Then
\begin{itemize}
\item[(a)] $G$ is chamber-transitive on $\Gamma$ if and only if  $G_{(K')}$ is transitive on $\bigcap_{x \in K'} y^{G_x} = \bigcap_{x \in K'} \Gamma_x$.
\item[(b)] $\Gamma$ is a geometry and $G$ is flag-transitive on $\Gamma$ if and only if $\Gamma'$ is a geometry and for every flag $Q$ of $\Gamma'$ contained in $K'$, $G_{(Q)}$ is transitive on $\bigcap_{x \in Q} y^{G_x} = \bigcap_{x \in Q} (\Gamma_x \cap Y)$.
\end{itemize}
\end{lemma}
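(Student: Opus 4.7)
The plan is to translate chamber- and flag-transitivity on $\Gamma$ into transitivity statements on the valid type $j$ completions, using the observation from Construction~\ref{balloon} that every chamber of $\Gamma$ has the form $K_1' \cup \{y'\}$ for a chamber $K_1'$ of $\Gamma'$ and a completion $y' \in Y$ incident to every element of $K_1'$; in particular, when $K_1' = K'$ the set of valid completions is exactly $\bigcap_{x \in K'} y^{G_x}$. The claimed equality $\bigcap_{x \in K'} y^{G_x} = \bigcap_{x \in K'} \Gamma_x$ in (a) holds because $K'$ exhausts the types in $I'$, so any element of $X$ incident to every point of $K'$ cannot have a type in $I'$ and must therefore lie in $Y$.

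For part (a), in the forward direction I would take $y_1, y_2 \in \bigcap_{x \in K'} y^{G_x}$ and apply chamber-transitivity of $G$ on $\Gamma$ to the chambers $K' \cup \{y_1\}$ and $K' \cup \{y_2\}$: any $g \in G$ realising this preserves types, so $y_1^g = y_2$ and $(K')^g = K'$, and because the elements of $K'$ have pairwise distinct types that $g$ preserves, $g$ fixes $K'$ pointwise, hence $g \in G_{(K')}$. Conversely, given two chambers $C_a = K_a' \cup \{y_a\}$ of $\Gamma$, chamber-transitivity of $G$ on $\Gamma'$ yields $g_a \in G$ with $(K_a')^{g_a} = K'$; then $y_a^{g_a} \in \bigcap_{x \in K'} y^{G_x}$, and the hypothesis supplies $h \in G_{(K')}$ carrying $y_1^{g_1}$ to $y_2^{g_2}$, so $g_1 h g_2^{-1}$ maps $C_1$ to $C_2$.

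The forward direction of (b) is analogous: if $\Gamma$ is a geometry then $\Gamma'$ is too (extend any $\Gamma'$-flag to a chamber of $\Gamma$ and delete its type $j$ element), and the orbit condition on each flag $Q \subseteq K'$ follows by repeating the (a) argument with the flags $Q \cup \{y_1\}$ and $Q \cup \{y_2\}$ of $\Gamma$. For the reverse direction, note that the standing hypothesis of the lemma already gives $G$ chamber-transitive on $\Gamma'$, which combined with $\Gamma'$ being a geometry upgrades to flag-transitivity of $G$ on $\Gamma'$. Given any flag $F$ of $\Gamma$, I split it as $F = F_0 \cup S$ with $F_0 \subseteq X'$ and $S \subseteq Y$ (so $|S| \leq 1$) and use flag-transitivity on $\Gamma'$ to find $g \in G$ with $F_0^g \subseteq K'$. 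If $S = \{y'\}$ then $y'^g \in \bigcap_{x \in F_0^g} y^{G_x}$ because $g$ preserves incidence, and since $y$ also lies in this intersection, the orbit condition applied to $Q = F_0^g$ supplies $h \in G_{(F_0^g)}$ with $(y'^g)^h = y$. Hence $F^{gh} \subseteq K' \cup \{y\}$, so $F$ is contained in the chamber $(K' \cup \{y\})^{(gh)^{-1}}$ of $\Gamma$, proving $\Gamma$ is a geometry; applying the same reduction to any pair of flags of the same type delivers flag-transitivity of $G$ on $\Gamma$.

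The main obstacle is the reverse direction of (b), in which a single argument must simultaneously furnish the geometry property for $\Gamma$ and the flag-transitivity of $G$ on $\Gamma$ from orbit conditions expressed purely in terms of type $j$ completions. The crucial device is the observation that $y$ itself belongs to every $\bigcap_{x \in Q} y^{G_x}$ (since $1 \in G_x$), so the orbit condition effectively produces a $G_{(Q)}$-element conjugating an arbitrary completion of $Q$ to the reference point $y$ while fixing $Q$ pointwise; coupled with the already-established flag-transitivity on $\Gamma'$, this suffices to transport every flag of $\Gamma$ into the fixed reference chamber $K' \cup \{y\}$.
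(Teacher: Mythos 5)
Your proof is correct and follows essentially the same line of argument as the paper's: both directions of (a) are handled by the observation that a chamber of $\Gamma$ decomposes uniquely as a $\Gamma'$-chamber together with a type~$j$ completion, and (b) reduces the flag-transitivity question on $\Gamma$ to the orbit conditions on subsets $Q \subseteq K'$ via the same case split (flags contained in $X'$ versus flags containing a $Y$-element). The only cosmetic difference is that in (a) the paper fixes the single reference chamber $K' \cup \{y\}$ and transports an arbitrary chamber to it, while you map two arbitrary chambers to each other; these are interchangeable. Your preliminary observations (that every chamber of $\Gamma$ splits as claimed, and that $\bigcap_{x\in K'} y^{G_x} = \bigcap_{x\in K'}\Gamma_x$ because $K'$ exhausts $I'$) are both correct and the paper treats them as implicit. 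Your handling of the forward direction of (b) — deducing that $\Gamma'$ is a geometry by extending and deleting the type~$j$ element, and deriving each orbit condition by applying the pointwise-stabiliser argument to flags $Q \cup \{y_a\}$ — matches the paper's reasoning, as does your upgrade of chamber-transitivity on $\Gamma'$ to flag-transitivity once $\Gamma'$ is known to be a geometry.
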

\begin{proof}
First we prove part (a).  Assume that $G$ is chamber-transitive on $\Gamma$.  The set of chambers of $\Gamma$ containing $K'$ consists of all sets of the form $K' \cup \{y'\}$ for $y' \in \bigcap_{x \in K'} y^{G_x} \subseteq Y$.  Hence $G_{(K')}$ is transitive on $\bigcap_{x \in K'} y^{G_x}=\bigcap_{x \in K'} \Gamma_x$.

On the other hand, assume that $G_{(K')}$ is transitive on $\bigcap_{x \in K'} \Gamma_x$, let $L$ be a chamber of $\Gamma$, and let $K$ be the chamber $K' \cup \{y\}$ given by Construction \ref{balloon}.  Now $L$ contains a chamber $L'$ of $\Gamma'$ and by assumption there exists $g \in G$ with $(L')^g = K'$.  By Lemma \ref{balloonlem}, $G \leq \Aut\Gamma$, so the image of $L$ under $g$ is also a chamber of $\Gamma$, and the unique point $y'$ in $L^g \cap Y$ is contained in $\bigcap_{x \in K'} \Gamma_x$.  Hence there exists $h \in G_{(K')}$ mapping $y'$ to $y$, and so $L^{gh} = K$.  It follows that $G$ is chamber-transitive on $\Gamma$.

We now prove part (b).  Assume that $\Gamma$ is a geometry and that $G$ is flag-transitive on $\Gamma$. Let $F$ be a non-maximal flag of $\Gamma'$.  Then $F$ is contained in a chamber $L$ of $\Gamma$, and there exists a chamber $L'$ of $\Gamma'$ such that $F \subseteq L' \subset L$.  Hence $\Gamma'$ is a geometry.   Any subset $Q$ of $K'$ is a flag of $\Gamma$ and since $G$ is flag-transitive on $\Gamma$, $G_{(Q)}$ is transitive on the set of $(t(Q)\cup\{j\})$-flags of $\Gamma$ containing $Q$.  This implies that $G_{(Q)}$ is transitive on $\bigcap_{x \in Q} (\Gamma_x \cap Y)=\bigcap_{x \in Q} y^{G_x}$.

Conversely, assume that the latter condition in the statement holds.  Since this holds with $Q = K'$, part (a) implies that $G$ is chamber-transitive on $\Gamma$.  To prove that $\Gamma$ is a geometry (and hence also that $G$ is flag-transitive on $\Gamma$) let $F$ be a non-maximal flag of $\Gamma$.  If $F$ is contained entirely in $\Gamma'$, then $F$ is contained in a chamber $L'$ of $\Gamma'$ since $\Gamma'$ is a geometry, and since $G$ is chamber-transitive on $\Gamma'$ there exists $g \in G$ such that $(L')^g = K'$.  Hence $(K' \cup \{y\})^{g^{-1}}$ is a chamber of $\Gamma$ containing $F$.  On the other hand, if $F$ is not contained in $\Gamma'$, then $F = F' \cup \{y'\}$ for some flag $F'$ of $\Gamma'$ and point $y'$ in $Y$.  Now $F'$ is contained in a chamber $L'$ of $\Gamma'$ and hence there exists $h \in G$ with $(L')^h = K'$.  So $F^h = F'' \cup \{(y')^h\}$ where $F'' := (F')^h$ is a flag of $\Gamma'$ contained in $K'$.  By assumption there is an element $s \in G_{(F'')}$ mapping $(y')^h$ to $y$.  Hence $(K' \cup \{y\})^{s^{-1}h^{-1}}$ is a chamber of $\Gamma$ containing $F$, and it follows that $\Gamma$ is a geometry and $G$ is flag-transitive on $\Gamma$.
\end{proof}

We now give a demonstration of how Construction \ref{balloon} can be used to construct a \ourgeoms{G}{\Omega} starting with a group $G$ acting on a set $\Omega$ and a subset $\Sigma$ of $\Omega$. We first construct a geometry $(X_1,\ast,t)$ that has only one type, and that has $X_1=\Omega$ admitting the given $G$-action so that $K_1=\{x_1\}$ is a chamber, where $x_1 \in \Sigma$.  For each of the remaining points $x_i \in \Sigma$, we apply an iteration of Construction \ref{balloon}, adding a new type $i$, a new set $X_i$ (a copy of $\Omega$) and a new point $x_i\in X_i$ to the previous chamber.  We then prove that the result is a $G^{\Omega}$-uniform geometry (this is dependent on having chosen $\Sigma$ appropriately).

\begin{constr} \label{ASconstruction}
{\em
Let $m$ be an integer, and let $G = S_m$ acting on $\Omega = \{1,2,\ldots,m\}$.  For $i = 1$ to $m$ let $x_i = i \in \Omega$, and let $\Sigma = \{x_i \, | \, 1 \leq i \leq m-2\}$.  Let $X_1 = \Omega$, let $\Gamma_1 = (X_1,\ast,t)$, where $\ast$ consists of the pairs $(x,x)$ for $x \in X_1$, and $t(x) = 1$ for all $x \in X_1$.  Let $K_1 = \{x_1\}$.  For $b$ with $1 < b \leq m-2$, suppose we have constructed $\Gamma_{b-1}$ and chamber $K_{b-1} = \{x_1, \ldots, x_{b-1}\}$.  Let $\Gamma_b = \Inc(\Gamma_{b-1},G,K_{b-1},\Omega,x_b,b)$ as in Construction \ref{balloon}, and $K_b = K_{b-1} \cup \{x_b\}$.
}
\end{constr}

\begin{lemma} \label{ASconstructionlem}
For $1 \leq b \leq m-2$, let $\Gamma_b$ be the pregeometry given by Construction {\em \ref{ASconstruction}}. Then
\begin{itemize}
\item[(a)] $\Gamma_b$ is a geometry of rank $b$ and $G$ is flag-transitive on $\Gamma_b$,
\item[(b)] $\Gamma_b$ is thick if $b > 1$, and
\item[(c)] if $b > 1$ then the rank $2$ truncations of $\Gamma_b$ are connected (and hence $\Gamma_b$ is a \ourgeoms{G}{\Omega}).
\end{itemize}
\end{lemma}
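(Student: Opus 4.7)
The plan is to prove all three parts simultaneously by induction on $b$, leveraging Lemmas \ref{balloonlem} and \ref{balloonchambtrans}. The base case $b=1$ is immediate: $\Gamma_1$ has rank $1$, has no rank $2$ truncations, every singleton element is a chamber, and $G = S_m$ is transitive on $X_1 = \Omega$, so $\Gamma_1$ is a geometry in $\C$ with $G$ flag-transitive (parts (b) and (c) only claim anything for $b > 1$). For the inductive step assume the conclusion holds for $\Gamma_{b-1}$, and apply the relevant lemmas to $\Gamma_b = \Inc(\Gamma_{b-1},G,K_{b-1},\Omega,x_b,b)$.

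The shared computational input is that $G$ acts on each $X_i$ via the natural $S_m$-action on $\Omega$, so $G_{x_i} = \Sym(\Omega \setminus \{i\})$, and $x_b^{G_{x_i}} = \Omega \setminus \{i\}$ since $b \neq i$. For part (a), let $Q$ be any flag of $\Gamma_{b-1}$ with $Q \subseteq K_{b-1}$ and write $J = \{i : x_i \in Q\} \subseteq \{1,\ldots,b-1\}$; then the pointwise stabiliser $G_{(Q)}$ acts on $\Omega \setminus J$ as the full symmetric group, while
\[
\bigcap_{x_i \in Q} x_b^{G_{x_i}} \;=\; \bigcap_{i \in J}(\Omega \setminus \{i\}) \;=\; \Omega \setminus J,
\]
on which $G_{(Q)}$ is clearly transitive. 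Lemma \ref{balloonchambtrans}(b) then yields that $\Gamma_b$ is a geometry of rank $b$ on which $G$ is flag-transitive.

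For part (b), unpacking Construction \ref{balloon} with the orbit data above shows that incidence in $\Gamma_b$ between two distinct types is simply inequality in $\Omega$: $u \in X_i$ and $v \in X_j$ with $i \neq j$ are incident iff $u \neq v$ as elements of $\Omega$. Consequently, the residue of the corank-$1$ flag $K_b \setminus \{x_i\}$ consists of those $v \in \Omega$ avoiding $\{1,\ldots,b\}\setminus\{i\}$, giving $m - b + 1 \geq 3$ elements under the hypothesis $b \leq m-2$. Thickness at higher coranks follows by extending any non-maximal flag to a chamber one element at a time, since each stage offers at least three choices.

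For part (c), the rank $2$ truncations of $\Gamma_b$ not involving the new type $b$ are unchanged from $\Gamma_{b-1}$ and are connected by induction; for those involving type $b$, Lemma \ref{balloonlem}(b) reduces connectivity to $\langle G_{x_i}, G_{x_b} \rangle = G$ for each $i < b$, which holds because $\langle \Sym(\Omega\setminus\{i\}), \Sym(\Omega\setminus\{b\}) \rangle = S_m$ whenever $m \geq 3$ (and $m \geq b+2 \geq 4$ here). Since each $X_i$ is a copy of $\Omega$ on which $G$ acts naturally, the $G$-action on each $X_i$ is permutationally isomorphic to the $G$-action on $\Omega$, so $\Gamma_b$ is a \ourgeoms{G}{\Omega}. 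The only substantive point in the whole argument is the orbit-intersection identification $\bigcap_{i \in J}(\Omega \setminus \{i\}) = \Omega \setminus J$ together with the $\Sym(\Omega \setminus J)$-transitivity on it; once this is in hand, everything else drops out of the generic machinery already established.
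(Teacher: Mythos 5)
Your proof is correct and follows essentially the same argument as the paper: part (a) uses the identical orbit-intersection computation $\bigcap_{i\in J}(\Omega\setminus\{i\})=\Omega\setminus J$ together with transitivity of $G_{(Q)}=\Sym(\Omega\setminus J)$ to invoke Lemma \ref{balloonchambtrans}(b); part (b) identifies the residue of a corank-$1$ flag as having $m-b+1\geq 3$ elements; and part (c) reduces to $\langle G_{x_i},G_{x_j}\rangle=G$ via Lemma \ref{balloonlem}(b). The only cosmetic difference is that you make explicit the observation that incidence between distinct types is just inequality in $\Omega$ (which the paper records separately in the remark following the lemma), and your phrase about ``extending one element at a time'' for higher-corank thickness is unnecessary --- once corank-$1$ flags lie in $\geq 3$ chambers, so does every non-maximal flag, simply because it sits inside some corank-$1$ flag.
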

\begin{proof}
Part (a): The proof is by induction on $b$.  As chambers are singletons for $\Gamma_1$, the statement is trivially true for $\Gamma_1$.  Suppose inductively that $1 < b \leq m-2$ and that $\Gamma_{b-1}$ is a geometry with $G$ flag-transitive on $\Gamma_{b-1}$.  We check that the condition in Lemma \ref{balloonchambtrans}(b) holds for $\Gamma_b = \Inc(\Gamma_{b-1},G,K_{b-1},\Omega,x_{b},b)$.  Let $Q$ be a non-empty subset of $K_{b-1}$.  We need to show that $\bigcap_{x_s \in Q} G_{x_s}$ is transitive on $\bigcap_{x_s \in Q} x_b^{G_{x_s}}$.

Let $x_s \in Q$.  Since $G = S_m$, $G_{x_s}$ is transitive on $\Omega \backslash \{x_s\}$.  Hence $\bigcap_{x_s \in Q} x_b^{G_{x_s}} = \bigcap_{x_s \in Q} \Omega \backslash \{x_s\} = \Omega \backslash Q$.  Now $\bigcap_{x_s \in Q} G_{x_s} = G_{(Q)} = \Sym(\Omega \backslash Q)$ which is transitive on $\Omega \backslash Q$.  Hence the condition in Lemma \ref{balloonchambtrans}(b) holds and so $\Gamma_{b}$ is a geometry and  $G$ is flag-transitive on $\Gamma_b$.

\nl
Part (b): Consider the chamber $K_b = \{x_1,\ldots,x_b\}$ with $b \geq 2$, and let $K'$ be any co-rank $1$ flag contained in $K_b$, with $x_i$ the unique point in $K_b \backslash K'$.  Then (as shown in the previous paragraph) $G_{(K')} = \Sym(\Omega \backslash K')$, and since $b \leq m-2$, $|\Omega \backslash K'| \geq 3$.  Since $G_{(K')}$ is transitive on $\Omega\backslash K'$, $K'$ is contained in $|\Omega\backslash K'|\geq 3$ chambers, and $\Gamma_b$ is a thick geometry.

\nl
Part (c): Since $G$ is primitive, $\langle G_{x_i}, G_{x_j} \rangle = G$ for distinct $x_i, x_j \in \Sigma$.  Hence by Lemma \ref{balloonlem}(b), the rank $2$ truncations are connected
\end{proof}

\begin{remark}
 We note that the geometry $\Gamma_b$, in Construction \ref{ASconstruction} is the well known geometry  constructed as follows: $\Gamma_b = (X,\ast, t)$ with type set $I = \{1,\ldots,b\}$, and $X = \bigdotcup_{i \in I} X_i$ where each $X_i$ is a copy of $\Omega$.  For each $i \in I$ there is a bijection $f_i : X_i \longmapsto \Omega$, and  incidence is defined as follows: for $x,y\in X$, $x \ast y$ if and only if either $x = y$, or $t(x) \neq t(y)$ and $f_{t(x)} (x) \neq f_{t(y)} (y)$.
\end{remark}

\section{Two product action constructions} \label{PAtype}

In this section we give two constructions which produce geometries where the action of $G$ on each set of elements of a given type is a product action. The first starts with a flag-transitive geometry of rank $k$ and set of elements $X=X_1\dotcup X_2\dotcup\ldots\dotcup X_k$ and for each positive integer $n$ constructs a new flag-transitive geometry of rank $k$ with set of elements $X_1^n\dotcup X_2^n\dotcup\ldots\dotcup X_k^n$. The second starts with a primitive group $G$ acting with a product action on $\Omega=\Delta^n$ and for all $k\leq \lfloor n/2\rfloor-1$ produces a \ourgeoms{G}{\Omega} of rank $k$.

\subsection{Forming the product of a geometry}

Given a geometry with set of elements $X=X_1\dotcup X_2\dotcup\ldots\dotcup X_k$ and a positive integer $n$ we construct a new geometry with set of elements $X_1^n\dotcup X_2^n\dotcup\ldots\dotcup X_k^n$. 

\begin{constr} 
\label{con:prod1}
Let $\Gamma=(X,*,t)$ be a rank $k$ geometry and for each $i\in I$ let $X_i=t^{-1}(i)$. For a positive integer $n$, let $\Gamma^n=(X',*',t')$ be the rank $k$ pregeometry whose set of elements is $X'=X_1^n\dotcup\ldots \dotcup X_k^n$ equipped with the map $t':X'\rightarrow I$ such that $t'(x)=i$ for each $x\in X_i^n$. Moreover, $(x_{i1},\ldots,x_{in})*'(y_{j1},\ldots,y_{jn})$  with $x_{i\ell}
\in X_i$ and $y_{j\ell}\in X_j$, if and only if $x_{i\ell}*y_{j\ell}$ for each $\ell=1,\ldots, n$.
\end{constr}

\begin{lemma}\label{lem:productup}
Let $\Gamma$ be a geometry, $n$ a positive integer, and let $\Gamma^n$ be the pregeometry yielded by Construction {\rm\ref{con:prod1}}. Then
\begin{enumerate}
 \item $\Gamma^n$ is a geometry.
\item If $\Gamma$ is firm (respectively thick) then $\Gamma^n$ is firm (respectively thick).
\item If each rank $2$ truncation of $\Gamma$ is connected then each rank $2$ truncation of $\Gamma^n$ is connected.
\item If $G$ is flag-transitive on $\Gamma$ then $G\Wr S_n$ is flag-transitive on $\Gamma^n$.
\item If $\Sigma$ is the residue of a flag $F$ of type $J$ in $\Gamma$ then $\Sigma^n$ is a residue of a flag of type $J$ in $\Gamma^n$. Moreover, $\Gamma^n$ has the same basic  diagram as $\Gamma$.
\end{enumerate}
\end{lemma}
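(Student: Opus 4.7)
My plan is to handle the five parts in order, exploiting the basic structural observation that any flag of type $J$ in $\Gamma^n$ decomposes, by projection to each coordinate, into an $n$-tuple of flags of type $J$ in $\Gamma$, and conversely any such $n$-tuple of flags in $\Gamma$ recombines into a flag of $\Gamma^n$. Parts (1), (2) and (4) will reduce quickly to this componentwise decomposition. For (1), given a flag $F$ of $\Gamma^n$, I project to obtain flags $F_1,\ldots,F_n$ of $\Gamma$, extend each to a chamber of $\Gamma$ using the geometry property, and recombine componentwise. For (2), the chambers of $\Gamma^n$ containing a non-maximal flag $F$ are in bijection with the $n$-tuples of chambers containing the $F_\ell$; since each $F_\ell$ is a non-maximal flag of $\Gamma$, firmness (respectively thickness) of $\Gamma$ yields at least $2^n \geq 2$ (respectively $3^n \geq 3$) chambers of $\Gamma^n$ containing $F$. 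For (4), flag-transitivity of $G$ on $\Gamma$ provides, in each coordinate, an element of $G$ mapping one projection to the other, and assembling these coordinate-wise gives an element of $G^n \leq G \Wr S_n$ sending one flag of $\Gamma^n$ to the other.

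The step requiring more care is (3). The incidence graph of the $\{i,j\}$-truncation of $\Gamma^n$ is essentially the restriction of the tensor product of $n$ copies of the bipartite incidence graph $H$ of the $\{i,j\}$-truncation of $\Gamma$ to the ``monochromatic'' tuples $X_i^n \cup X_j^n$, and tensor products of bipartite graphs are notoriously prone to being disconnected even when each factor is connected. My strategy is: given two vertices $\bar x$ and $\bar y$, I use connectedness of $H$ to find in each coordinate $\ell$ a walk from $x_\ell$ to $y_\ell$ in $H$, with the parity of the length forced by the types of $\bar x$ and $\bar y$. Since $\Gamma$ is a geometry, every vertex of $H$ has at least one neighbour, so any walk in $H$ can be extended by two steps via a back-and-forth move; this allows me to synchronise all coordinate walks to a common length, and assembling them componentwise produces a walk in the $\{i,j\}$-truncation of $\Gamma^n$. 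This synchronisation is where I expect the main subtlety to lie.

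For (5), I take the \emph{diagonal} flag $F^* = \{(f_j,\ldots,f_j) : j \in J\}$ of type $J$ in $\Gamma^n$. A tuple $(x_1,\ldots,x_n)$ is incident to every element of $F^*$ if and only if each coordinate $x_\ell$ is incident to every $f_j$, equivalently each $x_\ell$ lies in $\Sigma$; hence the residue of $F^*$ in $\Gamma^n$ is precisely $\Sigma^n$ as produced by Construction~\ref{con:prod1}. For the equality of basic diagrams, I observe more generally that every flag of type $I \setminus \{i,j\}$ in $\Gamma^n$ decomposes into $n$ flags of the same type in $\Gamma$, so its residue in $\Gamma^n$ is the componentwise product of the $n$ rank $2$ residues $\Sigma_1,\ldots,\Sigma_n$ in $\Gamma$. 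Such a componentwise product is complete bipartite if and only if every $\Sigma_\ell$ is complete bipartite. Consequently, $\{i,j\}$ is an edge of the basic diagram of $\Gamma^n$ if and only if some rank $2$ residue of type $\{i,j\}$ in $\Gamma$ fails to be complete bipartite, which is exactly the edge condition for the basic diagram of $\Gamma$.
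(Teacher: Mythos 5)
Your proposal is correct and follows essentially the same route as the paper's proof: componentwise decomposition/recombination of flags for (1), (2) and (4); synchronising coordinate walks by inserting length-2 detours (the paper phrases this as ``each path has odd length, so we can also find one of length $r_\ell+2p$'') for (3); and the diagonal flag plus the observation that a product of bipartite graphs is complete bipartite iff every factor is, for (5).
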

\begin{proof}
Let $J\subseteq I$ and $F=\{(x_{i1},\ldots,x_{in})\mid i\in J\}$ be a flag in $\Gamma^n$ of type $J$. Then for each $\ell$, $\{x_{i\ell}\mid i\in J\}$ is a flag in $\Gamma$ and so extends to a chamber $\{x_{i\ell}\mid i\in I\}$. Thus $\{(x_{i1},\ldots,x_{in})\mid i\in I\}$ is a chamber of $\Gamma^n$ and so  $\Gamma^n$ is a geometry. Moreover, if each flag of $\Gamma$ is contained in at least $r$ chambers then each flag of $\Gamma^n$ is contained in at least $r^n$ chambers. Hence (2) follows.

Let $(x_{i1},\ldots,x_{in}),(y_{j1},\ldots,y_{jn})\in\Gamma^n$ of type $i$ and $j$ respectively with $i\neq j$. If each rank two truncation of $\Gamma$ is connected, then we can find a path in the $\{i,j\}$-truncation of $\Gamma$ between $x_{i\ell}$ and $y_{j\ell}$ for each $\ell$. Moreover, each such path has odd length, so if we have a path of length $r_\ell$ from $x_{i\ell}$ to $y_{j\ell}$ we can also find one of length $r_\ell+2p$ for all positive integers $p$. This allows us to construct a path of length $\max\{r_{\ell}:\ell\in \{1,\ldots,n\}\}$ in $\Gamma^n$ from  $(x_{i1},\ldots,x_{in})$ to $(y_{j1},\ldots,y_{jn})$. Thus all rank two truncations of $\Gamma^n$ are connected.

Suppose now that $G$ is flag-transitive on $\Gamma$ and let $F_1=\{(y_{i1},\ldots,y_{in})\mid i\in J\}$ be another flag of $\Gamma^n$ of type $J$. Then for each $\ell\in \{1,\ldots,n\}$ there exists $g_\ell\in G$ such that $\{x_{i\ell}\mid i\in J\}^{g_\ell}=\{y_{i\ell}\mid i\in J\}$. Hence $(F)^{(g_1,\ldots,g_n)}=F_1$ and so both $G^n$ and $G\Wr S_n$ are flag-transitive on $\Gamma^n$. 

Let $F=\{y_i\mid i\in J\}$ be a flag of type $J$ in $\Gamma$. Then $F'=\{(y_i,\ldots,y_i)\mid i\in J\}$ is a flag of type $J$ in $\Gamma^n$. Moreover, for $r\notin J$ the element $(x_{r1},\ldots,x_{rn})$ is in the residue of $F'$ if and only if each  for each $k\leq n$, $x_{rk}$ is in the residue of the flag $\{y_{ik}\}_{i\in J}$ of $\Gamma$. In particular, if $\Sigma$ is the residue of $F$ then the residue of $F'$ is $\Sigma^n$. Moreover, if $F''=\{(y_{i1},\ldots,y_{in})\mid i\in J\}$ is a flag in $\Gamma^n$ then, for $r\notin J$, the element $(x_{r1},\ldots,x_{rn})$ is in the residue of $F''$ if and only if, for each $k\leq n$, $x_{rk}$ is in the residue of the flag $\{y_{ik}\}_{i\in J}$ of $\Gamma$. It follows that the residue of $F''$ is complete bipartite if and only if the residue of $\{y_{i\ell}\}_{i\in J}$ is complete bipartite for each $\ell$, and so the basic diagram for $\Gamma^n$ is the same as that for $\Gamma$.
\end{proof}

\subsection{A second product construction}
Construction \ref{increasing1s} below shows how to build a \ourgeoms{G}{\Omega} of rank at most $\lfloor n/2\rfloor -1$ where each set of elements of a given type is a copy of $\Omega=\Delta^n$ and $G = H \Wr S_n$ with $H$ primitive on $\Delta$.  As mentioned in Section \ref{QPPA}, there exist primitive groups of types PA, HC, CD, HA and TW which can be viewed in this manner.  Construction \ref{increasing1s} is sufficiently generic that it enables us to build geometries of unbounded rank for certain examples of primitive groups of each of these O'Nan-Scott types.

\begin{constr} \label{increasing1s}
{\em
Let $n$ be a positive integer, let $H$ be a primitive subgroup of $S_m$ acting on a set $\Delta$, and let $G = H \Wr S_n$, acting on $\Omega := \Delta^n$ in its product action (see Section \ref{QPgroups}).  Let $\alpha$ and $\beta$ be distinct elements of $\Delta$, and for $1 \leq c \leq \lfloor n/2\rfloor-1$, let $$x_{c} := (\underbrace{\alpha,\ldots,\alpha}_{2c},\underbrace{\beta,\ldots,\beta}_{n-2c}).$$
Let $\Sigma = \{x_{c} \, | \, 1 \leq c \leq \lfloor n/2\rfloor-1\}$.  Let $X_1 = \Omega$, let $\Gamma_1 = (X_1,\ast,t)$, where $\ast$ consists of the pairs $(x,x)$ for $x \in X_1$, and $t(x) = 1$ for all $x \in X_1$, and let $K_1 = \{x_1\}$.  For $b$ with $1 < b \leq \lfloor n/2\rfloor-1$, suppose we have constructed $\Gamma_{b-1}$ and chamber $K_{b-1} = \{x_1, \ldots, x_{b-1}\}$.  Let $\Gamma_b = \Inc(\Gamma_{b-1},G,K_{b-1},\Omega,x_{b},b)$ as in Construction \ref{balloon}, and $K_b = K_{b-1} \cup \{x_{b}\}$.
}
\end{constr}

Lemmas \ref{difficultorbits2}, \ref{dagconditions} and Corollary \ref{difficultorbitscor} are technical results needed for the proof of Lemma \ref{increasing1slem}, which states that Construction \ref{increasing1s} yields a \ourgeoms{G}{\Omega} of rank $b$.
Given $x \in \Omega = \Delta^n$, $\gamma \in \Delta$, and $1 \leq i < j \leq n$, we write $\nong_{[i,j]}(x)$ to mean the number of entries of $x$ in coordinates $i$ to $j$ not equal to $\gamma$.

\begin{lemma} \label{difficultorbits2}
Let $\Lambda_i$ and $\Lambda_j$ be subsets of $\Omega = \Delta^n$ with $i < j \leq n$, and let $\alpha,\beta \in \Omega$.  Let $a > j$, and assume that for $\ell \in\{ i, j\}$, $\Lambda_\ell$ consists of the $n$-tuples $x$ such that $$\nona_{[1,\ell]}(x) + \nonb_{[\ell+1,n]}(x) = a-\ell.$$  If $x \in \Lambda_i \cap \Lambda_j$ then entries $i+1$ to $j$ of $x$ are all equal to $\alpha$.
\end{lemma}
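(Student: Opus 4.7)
The plan is a direct algebraic manipulation of the two defining equations. Let $x \in \Lambda_i \cap \Lambda_j$. First I would split each count using the natural three-part partition of the coordinates into $[1,i]$, $[i+1,j]$ and $[j+1,n]$, so that
$$\nona_{[1,j]}(x) = \nona_{[1,i]}(x) + \nona_{[i+1,j]}(x) \quad \text{and} \quad \nonb_{[i+1,n]}(x) = \nonb_{[i+1,j]}(x) + \nonb_{[j+1,n]}(x).$$
Substituting these into the defining equations of $\Lambda_i$ and $\Lambda_j$ and then subtracting the latter from the former, the terms over $[1,i]$ and over $[j+1,n]$ cancel, leaving
$$\nonb_{[i+1,j]}(x) - \nona_{[i+1,j]}(x) = (a-i) - (a-j) = j - i.$$

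The key observation is then that the coordinate block $[i+1,j]$ has length exactly $j-i$, which is also the maximum possible value of $\nonb_{[i+1,j]}(x)$. Writing $A$, $B$, $C$ for the number of entries of $x$ in positions $i+1$ through $j$ equal to $\alpha$, equal to $\beta$, and equal to neither respectively (a valid partition because $\alpha \neq \beta$), one has $A + B + C = j - i$, $\nona_{[i+1,j]}(x) = B + C$ and $\nonb_{[i+1,j]}(x) = A + C$. Substituting into the displayed identity gives $A - B = A + B + C$, i.e.\ $2B + C = 0$, forcing $B = C = 0$ and hence $A = j - i$. Thus every entry of $x$ in positions $i+1$ through $j$ equals $\alpha$.

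There is essentially no serious obstacle here: the argument reduces to one subtraction and an elementary non-negativity check. The only minor decision is whether to phrase the final step via the subtraction above or by a direct bound argument on $\nonb_{[i+1,j]}(x) \leq j-i$; the subtraction seems cleanest. Note that the hypothesis $a > j$ is not actually invoked in the argument itself and is presumably retained in the statement to guarantee $\Lambda_i \cap \Lambda_j$ is non-empty in the intended applications.
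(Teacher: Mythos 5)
Your proof is correct and takes essentially the same approach as the paper: both derive the key identity $\nonb_{[i+1,j]}(x) - \nona_{[i+1,j]}(x) = j-i$ by splitting the two defining equations over the blocks $[1,i]$, $[i+1,j]$, $[j+1,n]$ and subtracting, and then conclude from a non-negativity bound. The paper finishes slightly more directly by observing $\nonb_{[i+1,j]}(x) \leq j-i$ forces $\nonb_{[i+1,j]}(x) = j-i$ and $\nona_{[i+1,j]}(x) = 0$ (avoiding any explicit appeal to $\alpha \neq \beta$), but your $A,B,C$ bookkeeping is an equally valid final step, and your observation that $a>j$ is not actually used is accurate.
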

\begin{proof}
We have $x \in \Lambda_i \cap \Lambda_j$ if and only if, taking $\ell$ equal to either $i$ or $j$, $\nona_{[1,\ell]}(x) + \nonb_{[\ell+1,n]}(x) = a-\ell$.  So we have 
\begin{itemize}
\item[(a)] $\nona_{[1,i]}(x) = a - i - \nonb_{[i+1,n]}(x)$, and
\item[(b)] $\nona_{[1,j]}(x) = a - j - \nonb_{[j+1,n]}(x)$.
\end{itemize}
Furthermore, since $i < j$
\begin{itemize}
\item[(c)] $\nonb_{[i+1,n]}(x) =  \nonb_{[i+1,j]}(x) + \nonb_{[j+1,n]}(x)$, and
\item[(d)] $\nona_{[1,i]}(x) = \nona_{[1,j]}(x) - \nona_{[i+1,j]}(x)$.
\end{itemize}

Substituting (a) into (d) gives 
$$a - i - \nonb_{[i+1,n]}(x) = \nona_{[1,j]}(x) - \nona_{[i+1,j]}(x),$$ 
and then using (c), we get 
$$a - i - \nonb_{[i+1,j]}(x) - \nonb_{[j+1,n]}(x) = \nona_{[1,j]}(x) - \nona_{[i+1,j]}(x).$$
Now, using (b) to replace the $\nona_{[1,j]}(x)$ term gives
$$a - i - \nonb_{[i+1,j]}(x) - \nonb_{[j+1,n]}(x) = a - j - \nonb_{[j+1,n]}(x) - \nona_{[i+1,j]}(x).$$
After cancelling terms and rearranging we are left with 
$$\nonb_{[i+1,j]}(x) - \nona_{[i+1,j]}(x) = j - i.$$  Since $\nonb_{[i+1,j]}(x) \leq j - i$ it follows that $\nonb_{[i+1,j]}(x) = j - i$ and $\nona_{[i+1,j]}(x) = 0$.  Hence entries $i+1$ to $j$ of $x$ are all $\alpha$.
\end{proof}

Let $G_0 := H_\beta \Wr S_n$ (the stabiliser in $G$ of the element $(\beta,\ldots,\beta) \in \Omega)$.  Let $h \in H$ such that $\beta^h = \alpha$.  Then for each $x_s \in \Sigma$ we have $x_s = (\beta,\ldots,\beta)^{h_s}$ where 
\begin{eqnarray}
h_s & = & (\underbrace{h,\ldots,h}_{2s},\underbrace{1_H,\ldots,1_H}_{n-2s}) \in H^n < G \label{hs}
\end{eqnarray}
and moreover, $G_{x_s} = h_s^{-1} G_0 h_s$.

\begin{lemma} \label{dagconditions}
Let $x_s$, $x_a \in \Sigma$ with $s < a$.  Then $x_a^{G_{x_s}}$ consists only of vectors $y$ such that $\nona_{[1,2s]}(y) + \nonb_{[2s+1,n]}(y) = 2a-2s$.
\end{lemma}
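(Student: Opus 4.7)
My plan is to exploit the factorisation $G_{x_s} = h_s^{-1} G_0 h_s$, which follows from $x_s = (\beta,\ldots,\beta)^{h_s}$ together with the fact that $G_0 = H_\beta \Wr S_n$ is the stabiliser of the all-$\beta$ tuple. An arbitrary $g \in G_{x_s}$ can then be written as $g = h_s^{-1} g_0 h_s$ with $g_0 = (k_1,\ldots,k_n)\tau^{-1} \in H_\beta \Wr S_n$, and I will compute $y = x_a^g$ in three stages, tracking at each stage how many entries of the intermediate tuple are unequal to $\beta$.

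First, since $h_s$ acts only on coordinates $1$ through $2s$ and $\alpha^{h^{-1}} = \beta$, the tuple $z := x_a^{h_s^{-1}}$ has $\beta$ in positions $1$ through $2s$ and $2a+1$ through $n$, and $\alpha$ in positions $2s+1$ through $2a$; in particular it has exactly $2a - 2s$ non-$\beta$ entries. Applying $g_0$ next, and using that each $k_i \in H_\beta$ fixes $\beta$ and hence (by injectivity) sends $\alpha$ to something other than $\beta$, the product-action formula shows that $w := z^{g_0}$ still satisfies $\nonb_{[1,n]}(w) = 2a - 2s$, although the non-$\beta$ entries may now occupy arbitrary positions determined by $\tau$. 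Finally, $y := w^{h_s}$ agrees with $w$ in positions greater than $2s$ and replaces each entry in positions at most $2s$ by its image under $h$. Because $h$ sends $\beta$ to $\alpha$, an entry $y_i$ with $i \leq 2s$ fails to be $\alpha$ precisely when $w_i$ fails to be $\beta$, so summing the two counts yields
$$\nona_{[1,2s]}(y) + \nonb_{[2s+1,n]}(y) = \nonb_{[1,n]}(w) = 2a - 2s,$$
as required.

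I do not anticipate a serious obstacle: the content of the lemma is that the mixed quantity $\nona_{[1,2s]} + \nonb_{[2s+1,n]}$ is the natural invariant of the $G_{x_s}$-orbit through $x_a$, and once the conjugation identity $G_{x_s} = h_s^{-1} G_0 h_s$ is in hand the verification reduces to pure wreath-product bookkeeping. The one subtle point is to remember to use $\alpha^{h^{-1}} = \beta$ (rather than $\alpha^h = \beta$) when unwinding $h_s^{-1}$, since $h$ is an arbitrary element of $H$ sending $\beta$ to $\alpha$ and need not be an involution.
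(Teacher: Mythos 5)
Your proof is correct and takes essentially the same approach as the paper: both conjugate to write elements of $G_{x_s}$ as $h_s^{-1}g_0 h_s$ with $g_0 \in H_\beta\Wr S_n$, then track how the count of non-$\beta$ entries (paper) or the invariant $\nonb_{[1,n]}$ (your version) transforms under the three stages. The only cosmetic difference is that the paper introduces an auxiliary count $d = \nonb_{[1,2s]}(v)$ of the intermediate tuple, whereas you observe directly that $\nonb_{[1,n]}$ is preserved under $h_s^{-1}$ and $g_0$ and then convert it to the mixed count via the final $h_s$; your subtle point about using $\alpha^{h^{-1}}=\beta$ is handled implicitly in the paper's single-line computation of $y$.
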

\begin{proof}
Let $y \in x_a^{G_{x_s}}$, and let $h_s$ be as in (\ref{hs}) where $\beta^h = \alpha$.  Then $y = x_a^{h_s^{-1} g h_s}$ for some $g = (t_1,\ldots,t_n)\sigma \in H_\beta \Wr S_n = G_0$.  We have
$$y =  (\underbrace{\beta,\ldots,\beta}_{2s},\underbrace{\alpha^{t_{2s+1}},\ldots,\alpha^{t_{2a}}}_{2a-2s},\underbrace{\beta,\ldots,\beta}_{n-2a})^{\sigma h_s}.$$
Since for all $i$, $\alpha^{t_i} \neq \beta$, and since $\sigma$ only permutes entries, the $n$-tuple
$$v := (\underbrace{\beta,\ldots,\beta}_{2s},\underbrace{\alpha^{t_{2s+1}},\ldots,\alpha^{t_{2a}}}_{2a-2s},\underbrace{\beta,\ldots,\beta}_{n-2a})^\sigma$$ has exactly $2a-2s$ entries not equal to $\beta$.  Let $d = \nonb_{[1,2s]}(v)$.  Then $d \leq 2a-2s$, and exactly $2a-2s-d$ entries from $2s+1$ to $n$ are $\nonb$; so $\nonb_{[2s+1,n]}(v) = 2a-2s-d$.  When we apply $h_s$ to $v$ (to obtain $y$), the entries of $v$ from $1$ to $2s$ equal to $\beta$ become $\alpha$, and each of the remaining $d$ entries becomes $\alpha^{t_ih_s}$, for some $i$, and as $\alpha^{t_i}\neq \beta$, we have $\alpha^{t_ih_s}\neq \alpha$. Hence $\nona_{[1,2s]}(y) = d$.  The entries $2s+1$ to $n$ of $v$ are unchanged by $h_s$, so $\nonb_{[2s+1,n]}(y) = \nonb_{[2s+1,n]}(v) = 2a-2s-d$.  It follows that $\nona_{[1,2s]}(y) + \nonb_{[2s+1,n]}(y) = 2a-2s$.
\end{proof}

\begin{corollary} \label{difficultorbitscor}
Let $x_i, x_j,x_a\in\Sigma$ with $2 \leq i < j < a \leq n/2-1$, and let $y \in x_a^{G_{x_i}} \cap x_a^{G_{x_j}}$.  Then entries $2i+1$ to $2j$ of $y$ are all $\alpha$.
\end{corollary}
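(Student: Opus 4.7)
The corollary is a direct combination of the two preceding technical lemmas, and the plan is simply to feed the conclusions of Lemma~\ref{dagconditions} into Lemma~\ref{difficultorbits2} with the appropriate index substitution.

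First I would apply Lemma~\ref{dagconditions} twice to the element $y$. Since $y \in x_a^{G_{x_i}}$ with $i<a$, the lemma yields
\[
\nona_{[1,2i]}(y) + \nonb_{[2i+1,n]}(y) = 2a - 2i.
\]
Similarly, since $y \in x_a^{G_{x_j}}$ with $j<a$, we get
\[
\nona_{[1,2j]}(y) + \nonb_{[2j+1,n]}(y) = 2a - 2j.
\]
These two equations place $y$ in the intersection $\Lambda_{2i} \cap \Lambda_{2j}$ of the sets defined in Lemma~\ref{difficultorbits2}, taking the parameter $a$ there to be $2a$ and the indices $i,j$ there to be $2i,2j$.

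Next I would check the hypothesis $a > j$ of Lemma~\ref{difficultorbits2} (in its own notation), which here reads $2a > 2j$; this holds since $j < a$ by assumption. Lemma~\ref{difficultorbits2} then concludes that entries $2i+1$ through $2j$ of $y$ are all equal to $\alpha$, which is exactly the claim of the corollary.

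Because Lemmas~\ref{dagconditions} and~\ref{difficultorbits2} have already isolated the combinatorial content, no further calculation is required; the only minor point to be careful about is the relabelling of indices (the ``$i,j,a$'' of Lemma~\ref{difficultorbits2} correspond to ``$2i,2j,2a$'' here), and that the bound $a \le n/2-1$ in the hypothesis ensures $2a \le n-2 < n$ so that the index range in Lemma~\ref{difficultorbits2} is valid. There is essentially no obstacle: the corollary is a one-line consequence once both lemmas are in hand.
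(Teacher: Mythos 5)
Your proposal is correct and follows exactly the same route as the paper: apply Lemma~\ref{dagconditions} to both $x_i$ and $x_j$, observe that the resulting equalities are precisely the defining conditions of Lemma~\ref{difficultorbits2} with indices $(2i,2j)$ and parameter $2a$, and then read off the conclusion. The paper states this in two sentences; your write-up simply spells out the index relabelling and hypothesis check ($2a>2j$) more explicitly, which is harmless.
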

\begin{proof}
By Lemma \ref{dagconditions}, the condition of Lemma \ref{difficultorbits2} holds for the pair $(2i,2j)$. Hence by Lemma \ref{difficultorbits2}, the entries $2i+1$ to $2j$ of $y$ are all equal to $\alpha$.
\end{proof}

\begin{lemma} \label{increasing1slem}
For $1 \leq b \leq \lfloor n/2\rfloor-1$, let $\Gamma_b$ be the pregeometry given by Construction {\rm\ref{increasing1s}}.  Then
\begin{itemize}
\item[(a)] $\Gamma_b$ is a geometry of rank $b$ and $G$ is flag-transitive on $\Gamma_b$,
\item[(b)] $\Gamma_b$ is thick if $b > 1$, and
\item[(c)] for $b > 1$ the rank $2$ truncations of $\Gamma_b$ are connected (and hence $\Gamma_b$ is a \ourgeoms{G}{\Omega}).
\end{itemize}
\end{lemma}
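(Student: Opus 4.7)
The plan is induction on $b$ using the recursive structure $\Gamma_b = \Inc(\Gamma_{b-1}, G, K_{b-1}, \Omega, x_b, b)$ arising from Construction \ref{balloon}. The base case $b=1$ is immediate, since $\Gamma_1$ is a rank-$1$ pregeometry on $\Omega$ and $G$ is transitive on $\Omega$. For the inductive step, Lemma \ref{balloonchambtrans}(b) reduces part (a) to showing, for each flag $Q = \{x_{i_1},\ldots,x_{i_r}\} \subseteq K_{b-1}$ with $i_1 < \cdots < i_r < b$, that $G_{(Q)}$ is transitive on $\bigcap_{s \in Q} x_b^{G_{x_s}}$.

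The heart of the argument is to identify this intersection explicitly. Writing $G_{x_s} = (H_\alpha^{2s} \times H_\beta^{n-2s}) \rtimes (S_{2s} \times S_{n-2s})$ as the stabilizer in the product action, a direct calculation shows that $x_b^{G_{x_s}}$ consists of $n$-tuples whose first $2s$ entries equal $\alpha$ and whose remaining $n-2s$ entries contain exactly $2b-2s$ values in $\alpha^{H_\beta}$ and $n-2b$ values equal to $\beta$. Applying Corollary \ref{difficultorbitscor} to consecutive pairs in $Q$ forces entries $2i_1+1,\ldots,2i_r$ of any $y$ in the intersection to equal $\alpha$; combining this with the first $2i_1$ entries being $\alpha$ (from $y \in x_b^{G_{x_{i_1}}}$) yields $\bigcap_{s \in Q} x_b^{G_{x_s}} = x_b^{G_{x_{i_r}}}$. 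The transitivity of $G_{(Q)}$ on this set then follows because $G_{(Q)}$ contains a copy of $H_\beta \Wr S_{n-2i_r}$ acting on the last $n-2i_r$ coordinates, which is transitive on the described configurations (using transitivity of $H_\beta$ on $\alpha^{H_\beta}$).

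For part (b), flag-transitivity reduces thickness to counting type-$i$ extensions of $K_b \setminus \{x_i\}$ for each $i$. A parallel orbit-intersection analysis (handling $s < i$ and $s > i$ separately, with the binding condition arising from the nearest $s$ on each side) shows the count is at least $\binom{4}{2}=6$, where the hypothesis $b \leq \lfloor n/2 \rfloor - 1$ guarantees enough room for the relevant positions; the boundary cases $i=1$ and $i=b$ give orbit sizes $\binom{4}{2}|\beta^{H_\alpha}|^2$ and $\binom{n-2b+2}{2}|\alpha^{H_\beta}|^2$ respectively, both at least $6$. For part (c), Lemma \ref{balloonlem}(b) reduces the connectedness of rank-$2$ truncations to verifying $\langle G_{x_i}, G_{x_b}\rangle = G$ for each $i<b$: the top-group parts $S_{2i}\times S_{n-2i}$ and $S_{2b}\times S_{n-2b}$ have overlapping supports and generate $S_n$, while the bottom-group parts, together with $S_n$-conjugation, generate $H^n$ via the primitivity of $H$ (specifically $\langle H_\alpha, H_\beta\rangle = H$ by maximality of $H_\alpha$), so the join is $H\Wr S_n = G$. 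The principal obstacle is the combinatorial identification of $\bigcap_{s \in Q} x_b^{G_{x_s}}$ with the single orbit $x_b^{G_{x_{i_r}}}$; once this collapse is secured, the remaining claims are routine wreath-product computations.
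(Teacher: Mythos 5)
Your proposal correctly identifies the inductive framework and appeals to the right technical tools (Lemma \ref{balloonchambtrans}(b), Corollary \ref{difficultorbitscor}, Lemma \ref{balloonlem}(b)), but the central combinatorial claim in part (a) is wrong, and this is a genuine gap rather than a presentational issue.

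You write $G_{x_s} = (H_\alpha^{2s} \times H_\beta^{n-2s}) \rtimes (S_{2s} \times S_{n-2s})$ and deduce that $x_b^{G_{x_s}}$ consists of $n$-tuples whose first $2s$ entries all equal $\alpha$. Both statements are false. The stabiliser is $G_{x_s} = h_s^{-1}(H_\beta \Wr S_n)h_s$, which projects onto \emph{all} of $S_n$ (conjugation by the base-group element $h_s$ does not restrict the top-group image). Concretely, if $\sigma$ swaps a position $\leq 2s$ with a position $>2s$, one can choose compensating $H$-components to obtain an element of $G_{x_s}$ covering $\sigma$, and such elements move non-$\alpha$ entries into the first $2s$ positions of $x_b$. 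A quick order count confirms the mismatch: $|G_{x_s}| = |H_\alpha|^n\,n!$ while your proposed group has order $|H_\alpha|^n (2s)!(n-2s)!$. So the orbit $x_b^{G_{x_s}}$ is strictly larger than the set you describe; Lemma \ref{dagconditions} constrains only the quantity $\nona_{[1,2s]}(y) + \nonb_{[2s+1,n]}(y)$, not each of the two terms separately.

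The claimed identity $\bigcap_{s\in Q} x_b^{G_{x_s}} = x_b^{G_{x_{i_r}}}$ then fails: by Corollary \ref{difficultorbitscor} the intersection lies inside the subset of $x_b^{G_{x_{i_r}}}$ having entries $2i_1+1$ to $2i_r$ equal to $\alpha$, and this is a \emph{proper} subset (since the orbit $x_b^{G_{x_{i_r}}}$ contains tuples violating that condition). Once the intersection is not a single orbit of $G_{x_{i_r}}$, your ``routine wreath-product computation'' has nothing to act on: you would need to show that the smaller group $\bigcap_{s\in Q}G_{x_s}$ acts transitively on a constrained slice of the orbit, which is exactly the nontrivial step. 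The paper's proof handles this by working with two arbitrary $u,u'$ in the intersection, writing $u = x_b^{h_j^{-1}gh_j}$ and $u' = x_b^{h_j^{-1}g'h_j}$, and explicitly constructing $z \in H_\beta\Wr S_n$ that fixes coordinates $2i+1,\dots,2j$ pointwise and sends $x_b^{h_j^{-1}g}$ to $x_b^{h_j^{-1}g'}$, then verifying that $h_j^{-1}zh_j$ fixes every $x_s \in Q$. Your parts (b) and (c) are fine in spirit (though the paper's proofs are simpler: for (b) one exhibits $\Sym(\{2i-1,\dots,2i+2\})$ inside $G_{(K')}$ directly, and for (c) primitivity of $G$ gives $\langle G_{x_i},G_{x_j}\rangle = G$ immediately from maximality of point stabilisers), but since they rest implicitly on the same faulty stabiliser description, the whole argument needs to be rebuilt around the correct form of $G_{x_s}$.
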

\begin{proof}
Part(a): The proof is by induction on $b$.  Since for $\Gamma_1$, $\ast$ is the set of pairs $(x,x)$ for $x \in X_1$, the statement is trivially true for $\Gamma_1$.  Suppose that $1 < b \leq \lfloor n/2\rfloor -1$ and that $\Gamma_{b-1}$ is a geometry with $G$ flag-transitive on $\Gamma_{b-1}$.  We check that the condition in Lemma \ref{balloonchambtrans}(b) holds for $\Gamma_b = \Inc(\Gamma_{b-1},G,K_{b-1},\Omega,x_{b},b)$. Recall that $K_{b-1} = \{x_1,\ldots,x_{b-1}\}$. Let $Q$ be a non-empty subset of $K_{b-1}$.  We need to show that $\bigcap_{x_s \in Q} G_{x_s}$ is transitive on $\bigcap_{x_s \in Q} x_b^{G_{x_s}}$.  

Let $i$ be the smallest subscript in $Q$ and $j$ the largest.  If $i = j$ then $G_{x_i} = \bigcap_{x_s \in Q} G_{x_s}$ is transitive on $x_b^{G_{x_i}} = \bigcap_{x_s \in Q} x_b^{G_{x_s}}$.  So we may assume that $i < j$.  It follows from Corollary \ref{difficultorbitscor} that:

\begin{eqnarray}
& & \text{all elements of $\bigcap_{x_s \in Q} x_b^{G_{x_s}}$ have entries $2i+1$ to $2j$ equal to $\alpha$.} \label{i+1toj1}
\end{eqnarray}

To show that $\bigcap_{x_s \in Q} G_{x_s}$ is transitive on $\bigcap_{x_s \in Q} x_b^{G_{x_s}}$, let $u$ and $u'$ be two $n$-tuples in $\bigcap_{x_s \in Q} x_b^{G_{x_s}}$. We find an element of  $\bigcap_{x_s \in Q} G_{x_s}$ mapping $u$ to $u'$.  As before, let $G_0$ be the stabiliser in $G$ of $(\beta,\ldots,\beta)$, and let $h \in H$ such that $\beta^h = \alpha$.  Then $G_0 = H_\beta \Wr S_m$, and for each $s$, $G_{x_s} = {h_s^{-1} G_0 h_s}$, with $h_s$ as in (\ref{hs}).

Thus for each $x_s \in Q$, each $n$-tuple in $x_b^{G_{x_s}}$ can be expressed as $x_b^{h_s^{-1} g h_s}$ for some $g \in G_0$.  Hence there exist $g,g' \in G_0$ such that $u = x_b^{h_j^{-1}gh_j}$ and $u' = x_b^{h_j^{-1}g'h_j}$, with $j$ as above.

From the definitions of $x_b$, $h_j$ and $g$ we observe that 
$$x_b^{h_j^{-1} g} = (\underbrace{\beta,\ldots,\beta}_{2j},\underbrace{\alpha,\ldots,\alpha}_{2b-2j},\underbrace{\beta,\ldots,\beta}_{n-2b})^{g}
$$
so $x_b^{h_j^{-1} g}$ has exactly $2b-2j$ non-$\beta$ entries, and these entries are all in $\alpha^{H_\beta}$; the same holds for  $x_b^{h_j^{-1}g'}$.  Furthermore, since by (\ref{i+1toj1}) the entries $2i+1$ to $2j$ of $u$ and $u'$ are all $\alpha$, and since $\beta^h = \alpha$, the entries $2i+1$ to $2j$ of $x_b^{h_j^{-1}g}$ and $x_b^{h_j^{-1}g'}$ are all equal to $\beta$.  Thus there exists an element $z$ of $G_0 = H_\beta \Wr S_n$ which maps $x_b^{h_j^{-1}g}$ to $x_b^{h_j^{-1}g'}$ and which has the form 
$$z = (z_1,\ldots,z_{2i},\underbrace{1_H,\ldots,1_H}_{2j-2i},z_{2j+1},\ldots,z_n)\sigma$$
where $\ell^\sigma = \ell$ for $2i+1 \leq \ell \leq 2j$.  We then have $x_b^{h_j^{-1}g h_j h_j^{-1} z h_j} = x_b^{h_j^{-1} g' h_j}$; that is, the element $h_j^{-1} z h_j$ of $G_{x_j}$ maps $u$ to $u'$.  Now, let $x_s \in Q$.  Then $i \leq s \leq j$ and we have 
\begin{eqnarray*}
x_s^{h_j^{-1}z h_j} & = & (\underbrace{\alpha,\ldots,\alpha}_{2s},\underbrace{\beta,\ldots,\beta}_{n-2s})^{h_j^{-1} z h_j}\\
& = & (\underbrace{\beta,\ldots,\beta}_{2s},\underbrace{\beta^{h^{-1}},\ldots,\beta^{h^{-1}}}_{2j-2s},\underbrace{\beta,\ldots,\beta}_{n-2s})^{z h_j}.
\end{eqnarray*}
Now $z$ fixes and acts trivially on each of the coordinates $2i+1$ to $2j$, and since $i \leq s$ and $z \in H_\beta \Wr S_n$ we obtain that
\begin{eqnarray*}
x_s^{h_j^{-1}z h_j} & = & (\underbrace{\beta,\ldots,\beta}_{2s},\underbrace{\beta^{h^{-1}},\ldots,\beta^{h^{-1}}}_{2j-2s},\underbrace{\beta,\ldots,\beta}_{n-2s})^{h_j}\\
& = & (\underbrace{\alpha,\ldots,\alpha}_{2s},\underbrace{\beta,\ldots,\beta}_{n-2s})=x_s.
\end{eqnarray*}
Hence $h_j^{-1}z h_j\in G_{x_s}$ for all $x_s \in Q$.  Thus we have shown that $\bigcap_{x_s \in Q} G_{x_s}$ is transitive on $\bigcap_{x_s \in Q} x_b^{G_{x_s}}$, and so by Lemma \ref{balloonchambtrans}, $\Gamma_b$ is a geometry with $G$ flag-transitive on $\Gamma_b$.  By induction, the result holds for each $b \leq \lfloor n/2\rfloor-1$.

\nl
Part (b):  Let $i$ be a type in $\Gamma_b$, so $1 \leq i \leq b$.  Let $K_b = \{x_1,\ldots,x_{b}\}$ be the chamber of $\Gamma_b$ given by Construction \ref{increasing1s}, and write $K' = K_b \backslash \{x_{i}\}$.  Then all elements of $K'$ are such that entries $2i-1$ to $2i+2$ are either all $\alpha$ or all $\beta$.  Hence $G_{(K')}$ contains the subgroup $\Sym(\{2i-1,\ldots,2i+2\}) \leq S_n$ acting on the coordinates.  The orbit of
$$x_{i} = (\alpha,\ldots,\alpha,\underbrace{\alpha,\alpha,\beta,\beta,}_{\mathrm{entries} \; 2i-1 \; \mathrm{to} \; 2i+2}\beta,\ldots,\beta)$$
under the subgroup $\Sym(\{2i-1,\ldots,2i+2\})$ contains six distinct $n$-tuples.  Hence $|x_{i}^{G_{(K')}}| \geq 6$, and so the co-rank $1$ flag $K'$ is contained in at least 6 chambers.  Since $G$ is flag-transitive on $\Gamma_b$, it follows that $\Gamma_b$ is thick.

\nl
Part (c): Let $i,j$ be distinct types. As $G$ is primitive, we have $\langle G_{x_i}, G_{x_j} \rangle = G$, and it follows from Lemma \ref{balloonlem}(b) that the $\{i,j\}$-truncation is connected.
\end{proof}

We now work towards determining the diagram for the geometries arising from Construction \ref{increasing1s}. 

\begin{lemma}
\label{lem:GF}
Let $\Gamma_b$ be the geometry yielded by Construction \ref{increasing1s} with $2\leq b\leq \lfloor n/2\rfloor -1$, and let $K_b$ be the chamber $\{x_1,\ldots,x_b\}$. Let $F\subset K_b$ be a flag of rank at least 2, and let $r_1=\min\{i\mid x_i\in F\}$ and $r_2=\max\{i\mid x_i\in F\}$. Then $(g_1,\ldots,g_n)\sigma\in G_F$ if and only if the following conditions all hold:
\begin{enumerate}
 \item[(i)] $\alpha^{g_k}=\alpha$ and $\beta^{g_k}=\beta$ for all $k$ such that $2r_1+1\leq k\leq 2r_2$;
 \item[(ii)] $\sigma$ fixes $\{2r_1+1,\ldots,2i\}$ for each $x_i\in F$;
 \item[(iii)] for all $k\leq 2r_1$, $\alpha^{g_k}\in\{\alpha,\beta\}$ and if $\alpha^{g_k}=\beta$ then $k^\sigma\geq 2r_2+1$ while if $\alpha^{g_k}=\alpha$ then $k^\sigma\leq 2r_1$;
 \item[(iv)] for all $k\geq 2r_2+1$, $\beta^{g_k}\in\{\alpha,\beta\}$ and if $\beta^{g_k}=\alpha$ then $k^{\sigma}\leq 2r_1$ while if $\beta^{g_k}=\beta$ then $k^\sigma\geq 2r_2+1$.
\end{enumerate}
Moreover, $G_F$ induces $\Sym(\{1,2,\ldots,2r_1,2r_2+1,2r_2+2,\ldots,n\})$ on $\{1,2,\ldots,2r_1,2r_2+1,2r_2+2,\ldots,n\}$.
\end{lemma}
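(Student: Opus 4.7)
The plan is to directly unpack, using the product action formula from Subsection~\ref{QPgroups}, what it means for $g = (g_1,\ldots,g_n)\sigma \in G = H \Wr S_n$ to fix each $x_i \in F$. Applying that formula, $g$ fixes $x_i$ if and only if for every coordinate $k$ one has $(x_i)_k^{g_k} = (x_i)_{k^\sigma}$; since $(x_i)_k$ equals $\alpha$ when $k \leq 2i$ and $\beta$ otherwise, this imposes a concrete condition on each $g_k$ in terms of $\sigma$ and $i$. Since $G_F = \bigcap_{x_i \in F} G_{x_i}$, I would first collect all these conditions over $x_i \in F$ and then show they are equivalent to (i)--(iv).

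The key step is to analyse the middle range $2r_1 < k \leq 2r_2$ using just the constraints from $x_{r_1}$ and $x_{r_2}$. Since $g_k$ is a bijection and both $\alpha^{g_k}$ and $\beta^{g_k}$ must lie in $\{\alpha,\beta\}$, either $g_k$ fixes both or swaps them. The swap case forces $k^\sigma \leq 2r_1$ from the $x_{r_1}$ constraint and simultaneously $k^\sigma > 2r_2$ from the $x_{r_2}$ constraint, which is impossible. Hence (i) holds, and the remaining constraint forces $2r_1 < k^\sigma \leq 2r_2$, so $\sigma$ setwise stabilises $\{2r_1+1,\ldots,2r_2\}$. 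Repeating the same dichotomy with an intermediate $x_i \in F$ in place of one of the extremal elements then refines this to show that $\sigma$ setwise-fixes each $\{2r_1+1,\ldots,2i\}$, yielding (ii).

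For coordinates $k \leq 2r_1$ or $k \geq 2r_2+1$, a direct case analysis on the constraints from $x_{r_1}$ and $x_{r_2}$ yields (iii) and (iv); the intermediate $x_i \in F$ impose no further restriction here because $r_1 \leq i \leq r_2$ makes the relevant thresholds compatible. The converse direction, that (i)--(iv) imply $g$ fixes every $x_i \in F$, is then a routine verification, using (ii) to handle the middle coordinates and (iii)--(iv) to handle the outer ones. For the final assertion, given any permutation $\tau$ of $\{1,\ldots,2r_1,2r_2+1,\ldots,n\}$, I would extend $\tau$ to $\sigma \in S_n$ by the identity on $\{2r_1+1,\ldots,2r_2\}$, set $g_k = 1_H$ in the middle range and wherever $\tau$ keeps $k$ on the same side of the gap, and invoke the transitivity of $H$ on $\Delta$ (immediate from its primitivity) to pick $g_k \in H$ sending $\alpha \mapsto \beta$ or $\beta \mapsto \alpha$ whenever $\tau$ moves $k$ across the gap. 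The main obstacle is purely the bookkeeping in the second paragraph, in particular ensuring that the constraints from \emph{all} $x_i \in F$ combine cleanly into the chain structure of (ii), rather than collapsing to the coarser statement that $\sigma$ only preserves the outer block $\{2r_1+1,\ldots,2r_2\}$.
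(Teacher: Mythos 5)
Your proof is correct and takes essentially the same route as the paper's: unpack the product action on each $x_i\in F$, use the constraints from $x_{r_1}$ and $x_{r_2}$ together with injectivity of $g_k$ to pin down the middle coordinates (giving (i)), bring in the intermediate $x_i$ to get the nested structure in (ii), and read off (iii)--(iv) from the extremal conditions, with a routine converse check. The only divergence is in the final assertion: the paper shows $G_F$ contains $\Sym(\{1,\ldots,2r_1\})\times\Sym(\{2r_2+1,\ldots,n\})$ and an element inducing a $2$-cycle on $X$, then cites \cite[Theorem 3.3A]{DM}, whereas you build a preimage of an arbitrary $\tau\in\Sym(X)$ directly; both are valid, and yours is marginally more self-contained.
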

\begin{proof}
Note that $(g_1,\ldots,g_n)\sigma$ fixes $x_i$ if and only if the following two conditions both hold:
\begin{enumerate}
 \item[(a)] for all $k\leq 2i$, $\alpha^{g_k}\in\{\alpha,\beta\}$  and if $\alpha^{g_k}=\beta$ then $k^\sigma\geq 2i+1$ while if $\alpha^{g_k}=\alpha$ then $k^\sigma\leq 2i$.
 \item[(b)] for all $k\geq 2i+1$, $\beta^{g_k}\in\{\alpha,\beta\}$  and if $\beta^{g_k}=\alpha$ then $k^\sigma\leq 2i$ while if $\beta^{g_k}=\beta$ then $k^\sigma\geq 2i+1$.
\end{enumerate}
Thus any element satisfying the four conditions of the lemma fixes $F$. Conversely, suppose $g=(g_1,\ldots,g_n)\sigma$ fixes $F$ and let $k$ be such that $2r_1+1\leq k\leq 2r_2$.  Since $g$ fixes $x_{r_1}$ we have that $\beta^{g_k}\in\{\alpha,\beta\}$ and if $\beta^{g_k}=\alpha$ then $k^\sigma\leq 2r_1$.   Now $g$ also fixes $x_{r_2}$ and so $\alpha^{g_k}\in\{\alpha,\beta\}$. If $\alpha^{g_k}=\beta$ then $k^\sigma\geq 2r_2+1$ and $\beta^{g_k}=\alpha$. However, since $g$ fixes $x_{r_1}$ the fact that $\beta^{g_k}=\alpha$ is meant to imply that $k^\sigma\leq 2r_1$, a contradiction. Thus $\alpha^{g_k}=\alpha$ and $\beta^{g_k}=\beta$ and part (i) holds. Part (ii) then follows from the fact that $g$ fixes $x_{r_1}$ (condition (b)) and $x_i$ (condition (a)). Part (iii) follows from condition (a) applied to $x_{r_1}$ and the fact that $\sigma$ fixes $\{2r_1+1,\ldots,2r_2\}$ while part (iv) follows from condition (b) applied to $x_{r_2}$ and part (ii).

By part (ii), $G_F$ fixes $X=\{1,2,\ldots,2r_1,2r_2+1,2r_2+2,\ldots,n\}$ setwise. Also by parts (iii) and (iv), $G_F$ contains all elements $(1,1,\ldots,1)\sigma$ with $\sigma\in\Sym(\{1,2,\ldots,2r_1\})\times\Sym(\{2r_2+1,2r_2+2,\ldots,n\})$. Since $G_F$ contains the element $((\alpha,\beta),1,\ldots,1,(\alpha,\beta))(1,n)$, it follows that $G_F$ acts transitively and also primitively on the set $X$. As $G_F$ contains an element inducing a 2-cycle on $X$, \cite[Theorem 3.3A]{DM} implies that $G_F^{X}=\Sym(X)$.
\end{proof}

Before determining the diagram of the geometries yielded by Construction \ref{increasing1s}, we describe some rank 2 geometries. For each such geometry, we provide a diagram 
\begin{center}
\begin{tikzpicture}[scale=0.4]

 \node (A) at (0,0) [circle, draw, fill=black!100, inner sep=0pt, minimum width=5pt,label=below:$\begin{array}{c} s_1 \\ n_1 \end{array}$] {}; 
\node (B) at (8,0)[circle, draw, fill=black!100, inner sep=0pt, minimum width=5pt, label=below:$\begin{array}{c} s_2 \\ n_2 \end{array}$] {};

\path (A) edge node [above] {$d_1$ \,\, $g$ \,\, $d_2$} (B);

\end{tikzpicture}
\end{center}
where for $i=1$ and $2$, $n_i$ denotes the number of elements of type $i$, $s_i+1$ is the number of elements of type $3-i$ incident to an element of type $i$, $d_i$ is the largest distance an element can be from an element of type $i$ in the incidence graph of the geometry, and $g$ denotes the gonality of the geometry, that is, $2g$ is the length of the smallest cycle in the incidence graph.

We use $U_{a,b}(m)$ to denote the rank 2 geometry whose elements of type 1 are the $a$-subsets of $\{1,\ldots,m\}$, whose elements of type 2 are the $b$-subsets of $\{1,\ldots,m\}$ and incidence is given by inclusion. The diagram for $U_{2,4}(6)$ is 
\begin{center}
\begin{tikzpicture}[scale=0.4]

 \node (A) at (0,0) [circle, draw, fill=black!100, inner sep=0pt, minimum width=5pt,label=below:$\begin{array}{c} 5\\  15 \end{array}$] {}; 
\node (B) at (8,0)[circle, draw, fill=black!100, inner sep=0pt, minimum width=5pt, label=below:$\begin{array}{c} 5 \\  15 \end{array}$] {};

\path (A) edge node [above] {$3$ \,\, $2$ \,\, $3$} (B);

\end{tikzpicture}
\end{center}

We use $U_{a,b}(m,\delta)$ to denote the rank 2 geometry whose elements of type 1 are the $a$-subsets of $\{1,\ldots,m\}$ with each element of the $a$-subset coloured from a palette of size $\delta$, and whose elements of type 2 are the $b$-subsets of $\{1,\ldots,m\}$ with each element of the $b$-subset coloured from the same palette of size $\delta$. A coloured $a$-subset is incident with a coloured $b$-subset if one is contained in the other. Note that $U_{a,b}(m,1)=U_{a,b}(m)$. The diagram for $U_{2,4}(m,\delta)$ with $\delta\geq 2$ and $m\geq 5$ is
\begin{center}
\begin{tikzpicture}[scale=0.4]

 \node (A) at (0,0) [circle, draw, fill=black!100, inner sep=0pt, minimum width=5pt,label=below:$\begin{array}{c} \binom{m-2}{2}\delta^{2}-1 \\ \\\binom{m}{2}\delta^2 \end{array}$] {}; 
\node (B) at (8,0)[circle, draw, fill=black!100, inner sep=0pt, minimum width=5pt, label=below:$\begin{array}{c} \binom{4}{2}-1\\ \\ \binom{m}{4}\delta^4 \end{array}$] {};

\path (A) edge node [above] {$4$ \,\, $2$ \,\, $4$} (B);

\end{tikzpicture}
\end{center}
Note that given a 2-set $\{x,y\}$ with $x$ and $y$ both coloured blue, the elements incident with it are the coloured 4-sets containing it while the elements at distance 2 are the coloured 2-sets for which either the underlying 2-set is disjoint from $\{x,y\}$, or any $x$ or $y$ it contains is coloured blue. The elements at distance 3 are the remaining 4-sets, that is those containing at least one $x$ or $y$ coloured not blue, and the elements at distance 4 are the remaining coloured 2-sets, that is those containing at least one $x$ or $y$ coloured not blue.  Given a 4-set $\{x,y,u,v\}$ with all elements coloured blue, the elements incident with it are the 2-sets it contains, the elements at distance 2 are the 4-sets containing at least two of $\{x,y,u,v\}$ coloured blue, the elements are distance 3 are the remaining coloured 2-sets and the elements at distance 4 are the remaining 4-sets.

Finally, we use $\overline{U}_{a,b}(m,\delta)$ to denote the rank 2 geometry whose elements of type 1 are the $a$-subsets of $\{1,\ldots,m\}$ with each element of the $a$-subset coloured from a palette of size $\delta$, and whose elements of type 2 are the $b$-subsets of $\{1,\ldots,m\}$ with each element of the $b$-subset coloured from the same palette of size $\delta$.  A coloured $a$-subset is incident with a coloured $b$-subset if their underlying subsets are disjoint. The geometry $\overline{U}_{2,2}(m,\delta)$ with $m\geq 6$ and $\delta\geq 2$ has diagram
\begin{center}
\begin{tikzpicture}[scale=0.4]

 \node (A) at (0,0) [circle, draw, fill=black!100, inner sep=0pt, minimum width=5pt,label=below:$\begin{array}{c} \binom{m-2}{2}\delta^2-1 \\ \\ \binom{m}{2}\delta^2 \end{array}$] {}; 
\node (B) at (8,0)[circle, draw, fill=black!100, inner sep=0pt, minimum width=5pt, label=below:$\begin{array}{c} \binom{m-2}{2}\delta^2-1 \\ \\  \binom{m}{2}\delta^2 \end{array}$] {};

\path (A) edge node [above] {$3$ \,\, $2$ \,\, $3$} (B);

\end{tikzpicture}
\end{center}
Note that given a 2-set $\alpha=\{x,y\}$ coloured blue, the elements incident to it are the 2-sets of the other type that contain neither $x$ nor $y$, the elements at distance 2 all the 2-sets of the same type as $\alpha$ other than $\alpha$, and the elements at distance 3 are all the remaining 2-sets of the other type.

\begin{theorem}
\label{thm:proddiag}
Let $\Gamma_b$ be the geometry yielded by Construction \ref{increasing1s} with $2\leq b\leq \lfloor n/2\rfloor -1$. Then the diagram of $\Gamma_b$ is 
\begin{center}
\begin{tikzpicture}[scale=0.4]

 \node (A) at (0,0) [circle, draw, fill=black!100, inner sep=0pt, minimum width=5pt]{}; 
\node (B) at (4,0)[circle, draw, fill=black!100, inner sep=0pt, minimum width=5pt] {}; 
\node (C) at (8,0)[circle, draw, fill=black!100, inner sep=0pt, minimum width=5pt] {} ;
\node (D) at (12,0)[circle, draw, fill=black!100, inner sep=0pt, minimum width=5pt]{} ; 
\node (E) at (16,0)[circle, draw, fill=black!100, inner sep=0pt, minimum width=5pt]{} ; 
\node (F) at (20,0) [circle, draw, fill=black!100, inner sep=0pt, minimum width=5pt] {}; 
\node (G) at (24,0) [circle, draw, fill=black!100, inner sep=0pt, minimum width=5pt] {};
\node (H) at (28,0) [circle, draw, fill=black!100, inner sep=0pt, minimum width=5pt] {};

\path (A) edge node [below] {$\Sigma_1$} (B);

\path (B) edge node [below] {$\Sigma_2$} (C);
\path (C) edge node [below] {$\Sigma_2$} (D);

\draw [dashed] (D) to (E);

\path (E) edge node [below] {$\Sigma_2$} (F);

\path (F) edge node [below] {$\Sigma_2$} (G);

\path (G) edge node [below] {$\Sigma_3$} (H);

\path (A) edge [bend left]  node [auto] {$\Sigma_4$} (H);

\end{tikzpicture}
\end{center}
where $\Sigma_1=U_{4,2}(n-2b+6,|\Delta|-1)$,  $\Sigma_2=U_{2,4}(6)$, $\Sigma_3=U_{2,4}(n-2b+6,|\Delta|-1)$ and
$\Sigma_4= \overline{U}_{2,2}(n-2b+6,|\Delta|-1)$.
\end{theorem}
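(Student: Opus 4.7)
My plan is to compute, for each unordered pair of distinct types $\{i,j\}$ in $\{1,\ldots,b\}$, the rank~$2$ residue of the complementary co-rank~$2$ flag $F_{ij}=K_b\setminus\{x_i,x_j\}$, and verify two things: first, that only the four classes of pairs $\{1,2\}$, $\{i,i+1\}$ for $2\le i\le b-2$, $\{b-1,b\}$ and $\{1,b\}$ give rise to non-complete-bipartite residues, so these are exactly the edges of the basic diagram; second, that in these four cases the residue is isomorphic to $\Sigma_1$, $\Sigma_2$, $\Sigma_3$, $\Sigma_4$ as stated. By the flag-transitivity of $G$ on $\Gamma_b$ (Lemma~\ref{increasing1slem}(a)), all residues of flags with a fixed type set are isomorphic, so it suffices to work with this single flag for each pair.

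The first step is to extend Lemma~\ref{dagconditions} to the case $s>a$: an identical coordinate-counting argument shows that any $y\in x_a^{G_{x_s}}$ with $s>a$ satisfies $\nona_{[1,2s]}(y)+\nonb_{[2s+1,n]}(y)=2s-2a$, and combined with Lemma~\ref{dagconditions} one has $\nona_{[1,2s]}(y)+\nonb_{[2s+1,n]}(y)=|2a-2s|$ whenever $s\ne a$. Mimicking the proof of Lemma~\ref{difficultorbits2}, if $y\in x_a^{G_{x_s}}\cap x_a^{G_{x_{s+1}}}$ then coordinates $2s+1,2s+2$ of $y$ are pinned to~$\alpha$ when $s+1\le a$ and to~$\beta$ when $s\ge a$. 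Intersecting these conditions across all $s\in\{1,\ldots,b\}\setminus\{i,j\}$ identifies a small ``free window'' of coordinates on which residue elements of each type may vary, of total size $n-2b+6$ for $\Sigma_1$, $\Sigma_3$, $\Sigma_4$ and of size~$6$ for $\Sigma_2$. For example, when $(i,j)=(1,2)$ every element $y$ of type~$1$ in the residue has coordinates $7,\ldots,2b$ equal to $\beta$ and satisfies $\nona_{[1,6]}(y)+\nonb_{[2b+1,n]}(y)=4$, with the analogous equation for type~$2$ giving~$2$ in place of~$4$.

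To identify each $\Sigma_k$, I would translate these conditions into a combinatorial description. In each of the four cases the residue elements of types~$i$ and~$j$ are in bijection with $r$-subsets and $s$-subsets of the free window, where $(r,s)$ is $(4,2)$, $(2,4)$, $(2,4)$ or $(2,2)$, and each chosen position carries a colour from an appropriate palette of non-$\alpha$ or non-$\beta$ values of size $|\Delta|-1$. The description of $G_F$ in Lemma~\ref{lem:GF} supplies enough transitivity to ensure the bijections respect the relevant symmetries, and one checks that membership of $(y_i,y_j)$ in the $G$-orbit of $(x_i,x_j)$ is equivalent to all off-diagonal coordinate-pair labels lying in the diagonal $H$-orbital containing $(\beta,\alpha)$. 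This translates incidence in the residue into inclusion (for $\Sigma_1,\Sigma_2,\Sigma_3$, since the $\alpha$-positions of $x_i$ lie inside those of $x_j$) or into disjointness (for $\Sigma_4$, since the pinned-coordinate structures of the type~$1$ and type~$b$ residue elements leave their deviant positions in disjoint halves of the free window).

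For any remaining pair $\{i,j\}$ (so $2\le i$, $j\le b-1$ and $j\ge i+2$), the pinning conditions force the free windows for type~$i$ and type~$j$ residue elements to occupy disjoint coordinate ranges, and a direct verification using the orbital description of incidence shows that every pair in the Cartesian product of the two residue sets lies in the $G$-orbit of $(x_i,x_j)$; hence the residue is complete bipartite and $\{i,j\}$ is not an edge of the basic diagram. The main obstacle I anticipate is verifying the \emph{sufficiency} of the orbit conditions, not merely their necessity as in Lemma~\ref{dagconditions}: namely, that every $n$-tuple obeying the derived pinning and $\nona+\nonb$ equations actually lies in the requisite intersection of $G$-orbits. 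This pushes the analysis into the $H_\alpha$- and $H_\beta$-orbital structure on $\Delta$ and requires particular care, since for primitive but non-$2$-transitive $H$ the palette of admissible coordinate values is a proper $H_\alpha$-orbit of $\beta$ rather than all of $\Delta\setminus\{\alpha\}$.
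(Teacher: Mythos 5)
Your overall strategy matches the paper's: reduce, via flag-transitivity, to computing the rank-$2$ residue of a sub-flag of the standard chamber $K_b$, and identify the non-complete-bipartite pairs as $\{1,2\}$, consecutive interior pairs, $\{b-1,b\}$ and $\{1,b\}$. The point where you genuinely diverge is in \emph{how} you get at the residue sets: the paper reads the constraints directly off the explicit description of $G_F$ in Lemma~\ref{lem:GF} and then computes $x_u^{G_F}$, whereas you first derive a two-sided ``pinning'' constraint by extending Lemma~\ref{dagconditions} to the case $s>a$ (the sign flip $2a-2s \mapsto 2s-2a$ works, and the mirror-image of Lemma~\ref{difficultorbits2} then pins coordinates $2s+1,\ldots,2(s+1)$ to $\beta$ when $s\ge a$), and only afterwards appeal to Lemma~\ref{lem:GF} to argue that the constrained set is a single $G_F$-orbit. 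That is a sound complementary route — the pinning makes the ``free window'' appear for conceptual reasons before any orbit-stabilizer bookkeeping — but you are right to observe that it supplies only necessary conditions and that the heavy lifting to establish sufficiency still goes through $G_F$, so the two proofs converge at that point.

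Your closing caveat is more than a worry; it flags an issue that also affects the paper's own argument. If one traces $x_u^{G_F}$ through Lemma~\ref{lem:GF} in, say, the case $u=1$, $v=2$, the deviant coordinate values on positions $\{1,\ldots,6\}$ must lie in the suborbit $\beta^{H_\alpha}$ (and on positions $\{2b+1,\ldots,n\}$ in a translate $\beta^{H_\alpha h}$ with $\alpha^h=\beta$), not in all of $\Delta\setminus\{\alpha\}$ and $\Delta\setminus\{\beta\}$. These coincide precisely when $H$ is $2$-transitive on $\Delta$, but Construction~\ref{increasing1s} only asks that $H$ be primitive. So the palette size $|\Delta|-1$ appearing in $\Sigma_1$, $\Sigma_3$, $\Sigma_4$ should in general be the subdegree $|\beta^{H_\alpha}|$, and the theorem as labelled is correct only under an unstated $2$-transitivity hypothesis. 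To finish your proof you should make this orbital structure explicit (identify the palette as $\beta^{H_\alpha}$ on one side and its $h$-translate on the other) and either state the theorem with the suborbit length as the colour-palette size, or add the hypothesis that $H$ is $2$-transitive. Beyond that correction, the incidence step you sketch — that $(y_i,y_j)$ are incident iff the pair lies in $(x_i,x_j)^G$, which reduces coordinate-wise to $H$-orbital membership — is right and is equivalent to the paper's computation of $x_u^{G_{F\cup\{x_v\}}}$, but you will also need to check that the correct \emph{number} of off-diagonal coordinate-pairs occurs, not merely that each one is of the right $H$-orbital type.
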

\begin{proof}
Let $F$ be a flag of rank $b-2$.  By Lemma \ref{increasing1slem}, $G$ is flag-transitive on $\Gamma_b$ so we may 
assume that $F\subset K_b=\{x_1,\ldots,x_b\}$. Let $x_{u}$ and $x_{v}$ be the two elements of $K_b\backslash F$ with $u<v$. 
Since $G$ is flag-transitive on $\Gamma_b$, the residue of $F$ is $(x_{u})^{G_F}\cup (x_{v})^{G_F}$ and the set of 
elements in this residue incident with $x_{u}$ is $(x_{v})^{G_{F\cup\{x_{u}\}}}$ while the set of elements incident 
with $x_{v}$ is $(x_{u})^{G_{F\cup\{x_{v}\}}}$. We split our analysis into several cases:

\medskip
\underline{$1<u<v<b$:} 
By Lemma \ref{lem:GF}, if $(g_1,\ldots,g_n)\sigma\in G_F$ then
\begin{enumerate}
 \item[(i)] $\alpha^{g_k}=\alpha$ and $\beta^{g_k}=\beta$ for all $k$ such that $3\leq k\leq 2b$.
 \item[(ii)] for $k\in\{1,2\}$, $\alpha^{g_k}\in\{\alpha,\beta\}$  and if $\alpha^{g_k}=\beta$ then $k^\sigma\geq 2b+1$ while if $\alpha^{g_k}=\alpha$ then $k^\sigma\in\{1,2\}$.
 \item[(iii)] for all $k\geq 2b+1$, $\beta^{g_k}\in\{\alpha,\beta\}$  and if $\beta^{g_k}=\alpha$ then $k^{\sigma}\in\{ 1,2\}$ while if $\beta^{g_k}=\beta$ then $k^\sigma\geq 2b+1$.
\end{enumerate}

Suppose first that $v\neq u+1$. Then $G_F$ induces 
$$\Sym(\{1,2,2b+1,2b+2,\ldots,n\}\times\Sym(\{2u-1,2u,2u+1,2u+2\})\times $$
$$\Sym(\{2v-1,2v,2v+1,2v+2\})\times S_2^{b-5}$$ on $\{1,\ldots,n\}$. It follows that
$(x_{u})^{G_F}$ consists of all $n$-tuples with 
entries $\{1,\ldots,2(u-1)\}$ equal to $\alpha$, entries $\{2u+3,\ldots,n\}$ equal to $\beta$ and precisely two 
of the entries $\{2u-1,2u,2u+1,2u+2\}$ equal to $\alpha$ and the remaining two equal to $\beta$.  Hence $|(x_{u})^{G_F}|=6$. Similarly $|(x_{v})^{G_F}|=6$. Moreover, since $x_{u-1},x_{u+1}\in F$ and $x_u\notin F$, Lemma \ref{lem:GF}, implies that $G_{F\cup\{x_{v}\}}$ 
contains $\Sym(\{2u-1,2u,2u+1,2u+2\})$ and so $x_{v}$ is incident with each element of $(x_{u})^{G_F}$. Thus the residue of $F$ has incidence graph the complete bipartite graph $K_{6,6}$.

When $v=u+1$, $G_F$ induces $$\Sym(\{1,2,2b+1,2b+2,\ldots,n\}\times\Sym(\{2u-1,2u,2u+1,2u+2,2u+3,2u+4\})\times S_2^{b-4}$$ on $\{1,\ldots,n\}$.
Then $(x_{u})^{G_F}$ consists of all $n$-tuples with 
entries $\{1,\ldots,2(u-1)\}$ equal to $\alpha$, entries $\{2u+5,\ldots,n\}$ equal to $\beta$ and precisely two 
of the entries $\{2u-1,2u,2u+1,2u+2,2u+3,2u+4\}$ equal to $\alpha$ and the remaining four equal to $\beta$. 
Thus $|(x_{u})^{G_F}|=15$. Similarly $|(x_{v})^{G_F}|$ consist of all $n$-tuples with 
entries $\{1,\ldots,2(u-1)\}$ equal to $\alpha$, entries $\{2u+5,\ldots,n\}$ equal to $\beta$ and precisely four 
of the entries $\{2u-1,2u,2u+1,2u+2,2u+3,2u+4\}$ equal to $\alpha$ and the remaining two equal to $\beta$. Now if $(g_1,\ldots,g_n)\sigma\in G_{F\cup\{x_{v}\}}$, then by Lemma \ref{lem:GF}, $\sigma$ fixes $\{2u-1,2u,2u+1,2u+2\}$ and $G_{F\cup\{x_{v}\}}$ includes $\Sym(\{2u-1,2u,2u+1,2u+2\})$. Thus in the residue of $F$, $x_{v}$ is incident with those elements of $(x_{u})^{G_F}$ for which precisely two of the entries from $\{2u-1,2u,2u+1,2u+2\}$ are equal to $\alpha$. Similarly, the elements of the residue of $G$ incident with $x_{u}$ are those elements of $(x_{v})^{G_F}$ such that the entries $\{2u-1,2u\}$ are equal to $\alpha$. Thus the residue of $F$ is isomorphic to $U_{2,4}(6)$.

\medskip
\underline{$u=1$ and $2<v<b$:} By Lemma \ref{lem:GF}, $G_F$ induces $$\Sym(\{1,2,3,4,2b+1,2b+2,\ldots,n\})\times S_2^{b-4}\times \Sym(\{2v-1,2v,2v+1,2v+2\})$$ on $\{1,\ldots,n\}$, while
$G_{F\cup\{x_1\}}$ induces $$\Sym(\{1,2,2b+1,2b+2,\ldots,n\})\times \Sym(\{3,4\})\times S_2^{b-4}\times \Sym(\{2v-1,2v,2v+1,2v+2\}).$$ Moreover, if $g=(g_1,\ldots,g_n)\sigma$ is an element of $G_F$ such that $\sigma$ fixes $\{3,4\}$ then $\alpha^{g_k}=\alpha$ for $k=3$ and $4$. Furthermore, $g$ also fixes $x_1$ if and only if $\beta^{g_k}=\beta$ for $k=3$ and $4$. Hence 
$|x_1^{G_F}|=(|\Delta|-1)^2\binom{n-2b+4}{2}$. Note that an element of $G_F$ fixes $x_{v}$ if and only if it fixes 
$\{2v-1,2v\}$ setwise. Since both $G_F$ and $G_{F\cup\{x_{u}\}}$ fix $\{2v-1,2v,2v+1,2v+2\}$ setwise and induce $\Sym(\{2v-1,2v,2v+1,2v+2\})$, it follows that 
$|(x_{v})^{G_F}|=\binom{4}{2}=6$ and $(x_{v})^{G_{F\cup\{x_{u}\}}}=(x_v)^{G_{F}}$. Thus the residue of $F$ is the 
complete bipartite graph with bipartite halves of size $(|\Delta|-1)^2\binom{n-2b+4}{2}$ and $6$.


\medskip
\underline{$1<u<b-1$ and $v=b$:} Arguing as in the previous case yields that the residue of $F$ is the complete bipartite graph with bipartite halves of size $6$ and $(|\Delta|-1)^2\binom{n-2b+4}{2}$.

%

\medskip
\underline{$u=1$ and $v=2<b$:}
By Lemma \ref{lem:GF}, 
$$\begin{array}{cl}
(G_F)^{\{1,\ldots,n\}} &=\Sym(\{1,2,3,4,5,6,2b+1,2b+2,\ldots,n\})\times S_2^{b-3} \\ (G_{F\cup\{x_1\}})^{\{1,\ldots,n\}} &=\Sym(\{1,2,2b+1,2b+2,\ldots,n\})\times \Sym(\{3,4,5,6\})\times S_2^{b-3} \\
(G_{F\cup\{x_2\}})^{\{1,\ldots,n\}} &=\Sym(\{1,2,3,4,2b+1,2b+2,\ldots,n\})\times \Sym(\{5,6\})\times S_2^{b-3}
\end{array}$$
Moreover, if $g=(g_1,\ldots,g_n)\sigma$ is an element of $G_F$ such that $\sigma$ fixes $\{3,4,5,6\}$ setwise, then $\alpha^{g_k}=\alpha$ for each $k\in\{3,4,5,6\}$. Furthermore, $g$ also fixes $x_1$ if and only if $\beta^{g_k}=\beta$ for each such $k$. Hence 
$|x_1^{G_F}|=(|\Delta|-1)^4\binom{n-2b+6}{4}$. Note that the elements of $x_1^{G_F}$ correspond to the $n$-tuples with entries $\{7,8,\ldots,2b\}$ equal to $\beta$, all but $r$ of the entries $\{2b+1,\ldots,n\}$ equal to $\beta$ for some $r$ with $0\leq r\leq 4$, and $4-r$ of the entries $\{1,2,3,4,5,6\}$ not equal to $\alpha$. In particular, the elements of $x_1^{G_F}$ correspond to 4-subsets of $\{1,2,3,4,5,6,2b+1,2b+2,\ldots,n\}$ with elements of $\{1,\ldots,6\}$ coloured by $\Delta\backslash\{\alpha\}$ and elements of $\{2b+1,2b+2,\ldots,n\}$ coloured by $\Delta\backslash\{\beta\}$.
 Similarly, $|x_2^{G_F}|=(|\Delta|-1)^2\binom{n-2b+6}{2}$ and the elements of $x_2^{G_F}$ correspond to the 2-subsets of $\{1,2,3,4,5,6,2b+1,2b+2,\ldots,n\}$ with elements of $\{1,\ldots,6\}$ coloured by $\Delta\backslash\{\alpha\}$ and elements of $\{2b+1,2b+2,\ldots,n\}$ coloured with $\Delta\backslash\{\beta\}$.

%

Now $(g_1,\ldots,g_n)\sigma\in G_{F\cup\{x_1\}}$ fixes $x_2$ if and only if $\sigma$ fixes $\{5,6\}$ setwise. Thus $|x_2^{G_{F\cup\{x_1\}}}|=\binom{4}{2}=6$ and the elements in the residue of $F$ incident with $x_1$ are those elements with the first two entries equal to $\alpha$, the last $n-6$ entries equal to $\beta$, two of the entries $\{3,4,5,6\}$ equal to $\alpha$ and the remaining two entries equal to $\beta$.  The 4-subset corresponding to $x_1$ is $\{3,4,5,6\}$ with all elements coloured by $\beta$ and the 2-subsets corresponding to the elements of $x_2^{G_{F\cup\{x_1\}}}$ are the 2-subsets of this 4-subset with the inherited colouring.

By Lemma \ref{lem:GF}, if $(g_1,\ldots,g_n)\sigma\in G_{F\cup\{x_2\}}$ such that $\sigma$ fixes $\{3,4\}$ then $\alpha^{g_k}=\alpha$ for $k=3,4$. Such an element then fixes $x_1$ if and only if it is also the case that $\beta^{g_k}=\beta$ for $k=3,4$. Hence $|(x_1)^{G_{F\cup\{x_2\}}}|=(|\Delta|-1)^2\binom{n-2b+4}{2}$.  Moreover, $(x_1)^{G_{F\cup\{x_2\}}}$ corresponds to the coloured 4-subsets containing the coloured 2-subset corresponding to $x_2$. It follows that the residue of $F$ is isomorphic to $U_{4,2}(n-2b+6,|\Delta|-1)$.

\medskip
\underline{$1<u=b-1$ and $v=b$:} Arguing as in the previous case yields that the residue of $F$ is isomorphic to $U_{2,4}(n-2b+6,|\Delta|-1)$.



\medskip
\underline{$u=1$ and $v=b$:}
By Lemma \ref{lem:GF}, 
$$\begin{array}{cl}
   (G_F)^{\{1,2,\ldots,n\}}&=\Sym(\{1,2,3,4,2b-1,2b,\ldots,n\})\times S_2^{b-3}\\
   (G_{F\cup\{x_1\}})^{\{1,2,\ldots,n\}}&=\Sym(\{1,2,2b-1,2b,\ldots,n\})\times\Sym(\{3,4\})\times S_2^{b-3}\\
   (G_{F\cup\{x_b\}})^{\{1,2,\ldots,n\}}&=\Sym(\{1,2,3,4,\ldots,n\})\times\Sym(\{2b-1,2b\})\times S_2^{b-3}
  \end{array}$$
Moreover, if $(g_1,\ldots,g_n)\sigma\in G_F$ such that $\sigma$ fixes $\{3,4\}$ then $\alpha^{g_k}=\alpha$ for $k=3,4$. Furthermore, such an element also fixes $x_1$ if and only if $\beta^{g_k}=\beta$ for $k=3,4$. Hence $|x_1^{G_F}|=(|\Delta|-1)^2\binom{n-2b+4}{2}$ and the elements of $x_1^{G_F}$ correspond to the set of 2-subsets of $\{1,2,3,4,2b-1,2b,\ldots,n\}$ with elements of $\{1,2,3,4\}$ coloured by $\Delta\backslash\{\alpha\}$ and elements of $\{2b-1,2b,\ldots,n\}$ coloured by $\Delta\backslash\{\beta\}$. Similarly, $|x_b^{G_F}|=(|\Delta|-1)^2\binom{n-2b+4}{2}$ and $x_b^{G_F}$ also corresponds to the set of 2-subsets of $\{1,2,3,4,2b-1,2b,\ldots,n\}$ with elements of $\{1,2,3,4\}$ coloured by $\Delta\backslash\{\alpha\}$ and elements of $\{2b-1,2b,\ldots,n\}$ coloured by $\Delta\backslash\{\beta\}$. Moreover, $|x_1^{G_{F\cup\{x_b\}}}|=|x_b^{G_{F\cup\{x_1\}}}|=(|\Delta|-1)^2\binom{n-2b+2}{2}$. In particular a coloured 2-subset in $x_1^{G_F}$ is incident with a coloured 2-subset in $x_b^{G_F}$ if and only if their 2-subsets are disjoint. Thus the residue of $F$ is isomorphic to $\overline{U}_{2,2}(2-2b+6,|\Delta|-1)$.


\end{proof}

\section{HS type}\label{HStype}

In this section we give a construction of a \ourgeoms{G}{\Omega} of arbitrary rank where $G=T\times T$ acts on $\Omega=T$ in a primitive action of type HS.

\begin{constr} \label{HSconstruction}
{\em
Let $m\geq 5$ be an integer, $T=A_m$, and let $G = T\times T$ acting on $\Omega=T$ by $t^{(t_1,t_2)}=t_1^{-1}tt_2$.  Let $x_0=1_T$, and for $i\in\{1,\ldots,\lfloor m/4\rfloor\}$, let $x_i = (1,2)(3,4)...(4i-1,4i) \in \Omega$, the product of $2i$ transpositions, and let $\Sigma = \{x_i \mid 0 \leq i \leq \lfloor m/4\rfloor\}$.  Let $X_0 = \Omega$, let $\Gamma_0 = (X_0,\ast,t)$, where $\ast$ consists of the pairs $(x,x)$ for $x \in X_0$, and $t(x) = 0$ for all $x \in X_0$.  Let $K_0 = \{x_0\}$.  For $b$ with $1 \leq b \leq\lfloor m/4\rfloor$, suppose we have constructed $\Gamma_{b-1}$ and a chamber $K_{b-1} = \{x_0, \ldots, x_{b-1}\}$.  Let $\Gamma_b = \Inc(\Gamma_{b-1},G,K_{b-1},\Omega,x_b,b)$ as in Construction \ref{balloon}, and $K_b = K_{b-1} \cup \{x_b\}$.
}
\end{constr}

Note that 
\begin{align}\label{eq:stab}
 G_{x_i}&=\{(t_1,t_2)\mid   t_1^{-1}x_it_2=x_i,\ \mbox{with}\ t_1, t_2\in T\}\nonumber\\ 
        &=\{(t_1,t_2)\mid  t_2=x_it_1x_i,\ \mbox{with}\ t_1, t_2\in T\}\nonumber\\
        &=\{(t,x_itx_i)\mid t\in T\}
  \end{align}
and
\begin{equation}\label{eq:orbit}
x_j^{G_{x_i}}=\{t^{-1}x_jx_itx_i\mid t\in T\}=(x_jx_i)^Tx_i. 
\end{equation}
Since $x_jx_i$ is a product of $2|j-i|$ transpositions, and all such elements are conjugate in $T$, it follows that $(x_jx_i)^T=(x_{|j-i|})^T$.

The following lemma plays a crucial role in the analysis.
\begin{lemma}
\label{lem:containstrans}
Let $y\in x_b^{G_{x_i}}\cap  x_b^{G_{x_j}}$ with $i<j<b\leq\lfloor m/4\rfloor$. Then when written as a product of disjoint cycles, $y$ includes $(4i+1,4i+2)\ldots(4j-1,4j)=x_jx_i$.
\end{lemma}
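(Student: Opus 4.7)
The plan is to exploit the explicit form of the orbits given by equation (\ref{eq:orbit}), namely $x_b^{G_{x_i}} = (x_b x_i)^T x_i$. Writing $y$ as an element of both orbits, I obtain $y = \sigma_1 x_i = \sigma_2 x_j$ where $\sigma_1 \in (x_b x_i)^T$ and $\sigma_2 \in (x_b x_j)^T$. Since $x_b, x_i, x_j$ are involutions and since $x_b x_i = (4i+1,4i+2)(4i+3,4i+4)\cdots(4b-1,4b)$ (all the transpositions in $x_i$ cancel the first $2i$ transpositions of $x_b$), the element $\sigma_1$ is a product of $2(b-i)$ disjoint transpositions, and hence has support of size exactly $4(b-i)$. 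Similarly $\sigma_2$ is a product of $2(b-j)$ disjoint transpositions with support of size $4(b-j)$.

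The first key step is to equate the two expressions for $y$. Since $x_i$ and $x_j$ are involutions, $\sigma_1 x_i = \sigma_2 x_j$ rearranges to
\[
\sigma_1 = \sigma_2 \cdot (x_j x_i).
\]
Now $x_j x_i = (4i+1,4i+2)(4i+3,4i+4)\cdots(4j-1,4j)$ is supported on the set $S := \{4i+1, 4i+2, \ldots, 4j\}$, which has cardinality $4(j-i)$.

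The crucial observation, which I expect to be the main (though short) obstacle, is a support-counting argument. In general $\mathrm{supp}(\sigma_1) \subseteq \mathrm{supp}(\sigma_2) \cup S$, so
\[
4(b-i) = |\mathrm{supp}(\sigma_1)| \leq |\mathrm{supp}(\sigma_2)| + |S| = 4(b-j) + 4(j-i) = 4(b-i).
\]
Equality throughout forces $\mathrm{supp}(\sigma_2)$ to be disjoint from $S$. Hence $\sigma_2$ fixes $S$ pointwise.

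To finish, I use $y = \sigma_2 x_j$. On $S$, the involution $x_j$ acts as $(4i+1,4i+2)(4i+3,4i+4)\cdots(4j-1,4j)$ and then $\sigma_2$ fixes the image. Therefore $S$ is $y$-invariant, and the restriction of $y$ to $S$ equals $(4i+1,4i+2)(4i+3,4i+4)\cdots(4j-1,4j) = x_j x_i$. Since $S$ is a union of $y$-orbits, these transpositions appear verbatim in the disjoint cycle decomposition of $y$, completing the proof.
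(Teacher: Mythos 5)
Your proof is correct and takes essentially the same approach as the paper's: both express $y=\sigma_1 x_i=\sigma_2 x_j$ using (\ref{eq:orbit}), derive $\sigma_1 = \sigma_2\,(x_j x_i)$ (equivalently $\sigma_2\sigma_1=x_jx_i$), and use a support count on products of disjoint transpositions to force $\supp(\sigma_2)$ to be disjoint from $S=\supp(x_jx_i)$. The only cosmetic difference is at the end, where you read off the conclusion from $y=\sigma_2 x_j$ using that $\sigma_2$ fixes $S$ pointwise, whereas the paper first notes that $\sigma_1=y_{b-i}$ contains $x_jx_i$ in its disjoint cycle decomposition and then reads off the conclusion from $y=\sigma_1 x_i$.
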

\begin{proof}
By (\ref{eq:orbit}) and the remark following it, 
there exist $y_{b-i}\in (x_{b-i})^T$ and $y_{b-j}\in (x_{b-j})^T$ such that $y=y_{b-i}x_i=y_{b-j}x_j$. Thus $y_{b-j}y_{b-i}=x_j x_i=(4i+1,4i+2)\ldots(4j-1,4j)$, and we note that the right hand side is a product of exactly $2(j-i)$ transpositions moving $4(j-i)$ points. Since $y_{b-j}$ is a product of $2(b-j)$ transpositions and $y_{b-i}$ is a product of $2(b-i)$ transpositions and $2(b-i)-2(b-j)=2(j-i)$, it follows that
$y_{b-j}y_{b-i}$ moves at least $4(j-i)$ points. Thus the $2(b-j)$ transpositions of $y_{b-j}$ are also transpositions for $y_{b-i}$, and the disjoint cycle representation for $y_{b-i}$ includes $(4i+1,4i+2)\ldots(4j-1,4j)$. Since $y=y_{b-i}x_i$ it follows that $y$ also contains $(4i+1,4i+2)\ldots(4j-1,4j)$.
\end{proof}

\begin{lemma} \label{HSconstructionlem}
For $0 \leq b \leq \lfloor m/4\rfloor$, let $\Gamma_b$ be the pregeometry given by Construction {\em \ref{HSconstruction}}. Then
\begin{itemize}
\item[(a)] $\Gamma_b$ is a geometry of rank $b+1$ and $G$ is flag-transitive on $\Gamma_b$,
\item[(b)] $\Gamma_b$ is thick if $b > 0$, and
\item[(c)] if $b > 0$ then the rank $2$ truncations of $\Gamma_b$ are connected (and hence $\Gamma_b$ is a \ourgeoms{G}{\Omega}).
\end{itemize}
\end{lemma}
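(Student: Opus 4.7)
The plan is to prove the lemma by induction on $b$, with Part (a) doing the bulk of the work; Parts (b) and (c) then follow by direct or parallel analysis.

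For Part (a), the base case $b = 0$ is trivial since chambers of $\Gamma_0$ are singletons.  For the inductive step, I would apply Lemma \ref{balloonchambtrans}(b): given a non-empty subset $Q = \{x_{s_1}, \ldots, x_{s_r}\} \subseteq K_{b-1}$ with $s_1 < \cdots < s_r$, one must verify that $\bigcap_{x_s \in Q} G_{x_s}$ acts transitively on $\bigcap_{x_s \in Q} x_b^{G_{x_s}}$ (the case $r = 1$ being trivial).  Using (\ref{eq:stab}), the stabiliser equals $\{(t, x_{s_1} t x_{s_1}) : t \in C_T(H)\}$, where $H = \langle x_{s_1} x_{s_i} : 2 \leq i \leq r\rangle$ is an elementary abelian $2$-subgroup supported on $C := \{4s_1+1, \ldots, 4s_r\}$.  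Applying Lemma \ref{lem:containstrans} with indices $s_1, s_r$ shows that every $y$ in the target intersection contains $\eta := x_{s_1} x_{s_r}$ in its disjoint cycle decomposition; writing $y = v x_{s_1} \eta$ with $v$ supported on $A \cup B := \{1,\ldots,4s_1\} \cup \{4s_r+1,\ldots,m\}$, the orbit description (\ref{eq:orbit}), combined with short support-disjointness calculations, reduces the simultaneous conditions $y \in x_b^{G_{x_s}}$ ($s \in Q$) to the single requirement that $v$ be a product of exactly $2(b - s_r)$ disjoint transpositions.

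To establish transitivity, a short calculation --- using that $t \in C_T(H)$ commutes with $\eta \in H$, and that $x_{s_1}$ and $\eta$ have disjoint supports --- shows that $(t, x_{s_1} t x_{s_1})$ sends $v x_{s_1} \eta$ to $(t^{-1} v t) x_{s_1} \eta$, so the action on $v$ is by conjugation.  Since $\Alt(A \cup B) \leq C_T(H)$, and the $\Sym(A \cup B)$-conjugacy class of involutions of cycle type $2^{2(b - s_r)}$ does not split in $\Alt(A \cup B)$ (the cycle type has a repeated part equal to $2$), transitivity follows.  The bound $b \leq \lfloor m/4\rfloor$ ensures $|A \cup B| = m - 4(s_r - s_1) \geq 4(b - s_r)$, so the relevant conjugacy class is non-empty.

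For Part (c), the HS-type action of $G = T \times T$ on $T$ is primitive and faithful, so the stabilisers $G_{x_i}$ are distinct maximal subgroups of $G$; hence $\langle G_{x_i}, G_{x_j}\rangle = G$ for distinct $i, j$, and Lemma \ref{balloonlem}(b) yields connectivity of the rank $2$ truncations.  Part (b) reduces, by flag-transitivity from Part (a), to showing $|\bigcap_{s \neq i} x_i^{G_{x_s}}| \geq 3$ for each type $i$; a case-by-case analysis ($i \in \{0, b\}$ versus $0 < i < b$), parallel to that of Part (a), identifies valid $y$'s with involutions of a specified cycle type inside a product symmetric group and yields counts of at least $3$ whenever $b \geq 1$.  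The main obstacle is the inductive step of Part (a): identifying the factorisation $y = v x_{s_1} \eta$, showing that the simultaneous constraints collapse to a single condition on $v$, and verifying that the induced action is conjugation.  The crucial observation that makes the commutator manipulations collapse is that $\eta = x_{s_1} x_{s_r}$ itself lies in the centralised group $H$.
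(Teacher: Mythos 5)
Your proposal follows essentially the same route as the paper's proof: induction via Lemma~\ref{balloonchambtrans}(b), the factorisation of each $y$ in $\bigcap_{x_s\in Q}x_b^{G_{x_s}}$ into a piece supported on $\{1,\ldots,4s_1\}\cup\{4s_r+1,\ldots,m\}$ times $x_{s_1}\eta$ (using Lemma~\ref{lem:containstrans} for the middle block), the computation that the stabiliser acts on this factored form by conjugating the outer factor, and primitivity for part (c). The one genuine technical variation is at the very end of the transitivity argument: the paper establishes that $\bigcap_{x_i\in Q}C_T(x_kx_i)$ induces the \emph{full} symmetric group $\Sym(J)$ on $J$ (which requires noting that odd permutations of $J$ can be compensated by an odd element supported on $\{4s_1+1,\ldots,4s_r\}$ that centralises all of the $x_{s_1}x_{s_i}$, e.g.\ the transposition $(4s_1+1,4s_1+2)$), whereas you stay inside $\Alt(J)\leq C_T(H)$ and appeal to the fact that the $S_m$-class of cycle type $2^{2(b-s_r)}$ does not split in the alternating group; this is a cleaner justification of the same transitivity. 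For part (b) the paper gives a uniform ``$S_2\Wr S_4$'' argument (which, as stated, assumes $1\le\ell\le b-1$), while you propose a case-split to handle the boundary types $i\in\{0,b\}$; the paper's count of $6$ drops to $3$ in those boundary cases, but $3$ suffices for thickness, so your approach is if anything a bit more careful. Overall: correct, and the same method as the paper with only cosmetic differences.
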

\begin{proof}
Part (a): The proof is by induction on $b$.  As chambers are singletons for $\Gamma_0$, the statement is trivially true for $\Gamma_0$.  Suppose inductively that $0 < b \leq \lfloor m/4\rfloor$ and that $\Gamma_{b-1}$ is a geometry with  $G$ flag-transitive on $\Gamma_{b-1}$.  We check that the condition in Lemma \ref{balloonchambtrans}(b) holds for $\Gamma_b = \Inc(\Gamma_{b-1},G,K_{b-1},\Omega,x_{b},b)$. 

Consider $Q\subseteq \{x_0,x_1,\ldots,x_{b-1}\}$ and let $k=\min\{i:x_i\in Q\}$ and $j=\max\{i:x_i\in Q\}$. Since the condition of Lemma \ref{balloonchambtrans}(b)  holds for $|Q|=1$, by the definition of $\Gamma_b$, we assume that $|Q|\geq 2$ and so $k\neq j$.

Let $y_1,y_2\in \cap_{x_i\in Q} x_b^{G_{x_i}} \subseteq x_b^{G_{x_k}}\cap x_b^{G_{x_j}}$. Then by (\ref{eq:orbit}), there exist $z_1,z_2\in (x_{b-k})^T$ such that $y_1=z_1x_k$ and $y_2=z_2x_k$. Moreover, by Lemma \ref{lem:containstrans}, each $y_i$ contains $x':= (4k+1,4k+2)\ldots(4j-1,4j)=x_kx_j$ in its disjoint cycle representation, and since $x_k$ moves only the points $1,\dots,4k$, it follows that each $z_i$ contains $x'$ in its disjoint cycle representation. Thus, since $z_i$ is a product of $2(b-k)$ transpositions, there exists $y_i'\in \Sym(\{1,\ldots,4k,4j+1,\ldots,m\})$, a product of $2(b-k)-2(j-k)=2(b-j)$ transpositions, such that $z_i=y_i'x'$ and hence such that $y_i=y_i'x'x_k$. 

For distinct $i,\ell\in [k,j]$, we have from (\ref{eq:stab}) that 
\begin{align*}
 G_{x_i}\cap G_{x_\ell}&=\{(t,x_itx_i)\mid t\in T, x_\ell tx_\ell=x_itx_i\}\\
                    &=\{(t,x_itx_i)\mid t\in C_T(x_\ell x_i)\}.
\end{align*}
Hence 
$$
\bigcap_{x_i\in Q} G_{x_i}=\{(t,x_ktx_k)\mid t\in C_T(x_kx_i) \text{ for all } x_i\in Q\}.
$$
For each $i\in Q$, the product $x_kx_i$ fixes the set $J:=\{1,\dots,4k, 4j+1,\dots,m\}$ pointwise, and is contained in the disjoint cycle representation for $x'=x_kx_j$. It follows that the group induced on $J$ by $\cap_{x_i\in Q}C_T(x_kx_i)$ is $\Sym(J)$.
Hence, there exists $t\in \cap_{x_i\in Q}C_T(x_kx_i)$ such that
$(y_1'x')^t=y_2'x'$.  
Then $(t,x_ktx_k)\in\cap_{x_i\in Q} G_{x_i}$ and maps $y_1$ to 
\[
t^{-1}y_1x_ktx_k=t^{-1}(y_1'x'x_k)x_ktx_k = t^{-1} y_1'x'tx_k\\
                =y_2'x'x_k=y_2.
\]
Thus $\cap_{x_i\in Q}C_T(x_kx_i)$ acts transitively on $\cap _{x_i\in Q} x_b^{G_{x_i}}$. Hence the condition of  Lemma \ref{balloonchambtrans}(b) holds and so by induction, $\Gamma_b$ is a geometry of rank $b+1$ and $G$ is flag-transitive on $\Gamma_{b}$.

Part(b): Consider the chamber $K_b=\{x_0,x_1,\ldots,x_b\}$, with $b \geq 1$, and let $K'$ be any co-rank $1$ flag contained in $K_b$, with $x_\ell$ the unique point in $K_b \backslash K'$. Let $k=\min\{i:x_i\in K'\}$. Then by (\ref{eq:stab}),
$$
G_{(K')}=\{(t,x_ktx_k)\mid t\in C_T(x_kx_i) \text{ for all } x_i\in K'\}.
$$ 
Now $\cap_{x_i\in K'} C_T(x_kx_i)$ induces the wreath product $S_2\Wr S_4$ on $\{4\ell-3,4\ell-2,4\ell-1,4\ell,4\ell+1,4\ell+2,4\ell+3,4\ell+4\}$ and so $x_{\ell}x_k$ has at least 6 images under $\cap_{x_i\in K'} C_T(x_kx_i)$. Since 
$$x_\ell^{(\bigcap_{x_i\in K'}G_{x_i})}=\{(x_\ell x_k)^tx_k\mid t\in \cap_{x_i\in K'} C_T(x_kx_i)\}$$
it follows that $x_\ell$ has at least 6 images under $\cap_{x_i\in K'}G_{x_i}$ and so $K'$ is contained in at least 6 chambers. Hence $\Gamma_b$ is thick.

Part (c): Since $G$ is primitive, $\langle G_{x_i}, G_{x_j} \rangle = G$ for all distinct $x_i, x_j \in \Sigma$.  Hence by Lemma \ref{balloonlem}(b), the rank $2$ truncations are connected.
\end{proof}

\begin{remark}\label{rem:HStoSD}
Let $G$ be as in Construction \ref{HSconstruction} and let $H=\langle G,\sigma\rangle$ such that $\sigma:T\rightarrow T$, $t\mapsto t^{-1}$. Then $H$ acts primitively on $T$ with O'Nan-Scott type SD. Since each $x_i$ is an involution, $H_{x_i}=\langle G_{x_i},\sigma\rangle$. Moreover, if $y\in x_j^{G_{x_i}}=(x_jx_i)^Tx_i$ then $y=zx_i$ for some involution $z\in (x_jx_i)^T$. Now $y^{-1}=x_iz=x_izx_ix_i$ and $x_izx_i\in (x_jx_i)^T$. Hence $y^{-1}\in x_j^{G_{x_i}}$ and so $(x_j^{G_{x_i}})^\sigma=x_j^{G_{x_i}}$. It follows that $H\leqslant \Aut(\Gamma_b)$ and so $\Gamma_b$ is also an \ourgeoms{H}{\Omega} for which $H$ acts primitively of type SD on $\Omega$.
\end{remark}

\begin{remark}
\label{rem:HSdiag}
We do not determine the diagrams for the geometries yielded by  Construction \ref{HSconstruction}. Note that by \cite[Proposition 18.1]{priminc}, a primitive group with O'Nan-Scott type HS cannot contain a primitive group of any other O'Nan-Scott type, and the only way it can be contained in a primitive group of another O'Nan-Scott type (other than the full alternating or symmetric group) is as in Remark \ref{rem:HStoSD}. Thus the geometries obtained are different to those obtained from Construction \ref{increasing1s}.
\end{remark}

\section{SD type} \label{SDtype}

It follows from Remark \ref{rem:HStoSD} that for each $k$ we can construct a \ourgeoms{G}{\Omega} of rank $k$ such that $G$ acts primitively of type SD on $\Omega$ and the socle of $G$ is the direct product of 2 simple direct factors. In this section we give a construction for a group of type SD whose socle is the product of $n$ simple direct factors for any $n\geq 5$.

\begin{definition} \label{SDgroup} {\em
Let $n\geq 5$ be an integer, let $T = A_m$ for some $m \geq 5$ and let $D:=\diag(T^n) = \{(t,\ldots,t) \, | \, t \in T\} < T^n$, the `straight diagonal subgroup' of $T^n$.  Let $G = T \Wr S_n=T^n\rtimes S_n$ in its primitive simple diagonal action on the set $\Omega$ of right cosets of $D$ in $T^n$.  That is to say, elements of the `base group' $T^n$ of $G$ act by right multiplication on $\Omega$, and an element $\sigma$ in the `top group' $S_n$ acts by $\sigma : D(t_1,\ldots,t_n) \longmapsto D(t_{1\sigma^{-1}},\ldots,t_{n\sigma^{-1}})$.
}
\end{definition}

For an element $x = (t_1,\ldots,t_n) \in T^n$, the \emph{support} of $x$ is the number of entries of $x$ not equal to $1_T$.  For $\delta \in \Omega$, $\bar{\delta}$ denotes a representative of the coset $\delta$, so $\delta = D\bar{\delta}$.  We sometimes denote the coset $D\bar{\delta}$ by $[\bar{\delta}]$.

\begin{lemma} \label{uniquerep}
Each element of $\Omega$ contains at most one coset representative with support strictly less than $n/2$.
\end{lemma}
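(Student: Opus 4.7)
The plan is to parametrise the coset representatives of a fixed $\delta \in \Omega$ and show that only one of them can have more than $n/2$ trivial coordinates. Recall that $(a_1,\ldots,a_n)$ and $(b_1,\ldots,b_n)$ lie in the same right coset $D\bar{\delta}$ if and only if there exists some (uniquely determined) $t \in T$ with $b_i = t a_i$ for every $i \in \{1,\ldots,n\}$. In particular, any two distinct representatives of $\delta$ are related by a unique element $t \neq 1_T$, so to prove uniqueness among representatives of support less than $n/2$ it suffices to show that no such nontrivial $t$ can exist once both representatives have small support.

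Fix two representatives $(a_1,\ldots,a_n)$ and $(b_1,\ldots,b_n) = (ta_1,\ldots,ta_n)$ of $\delta$ with $t \neq 1_T$, and consider the coordinate sets $A = \{i : a_i = 1_T\}$ and $A' = \{i : b_i = 1_T\}$. Since $b_i = 1_T$ exactly when $a_i = t^{-1}$, and since $t \neq 1_T$ gives $t^{-1} \neq 1_T$, the sets $A$ and $A'$ are disjoint subsets of $\{1,\ldots,n\}$. Hence $|A| + |A'| \leq n$. Translating the support condition, $(a_1,\ldots,a_n)$ has support strictly less than $n/2$ if and only if $|A| > n/2$, and similarly for $(b_1,\ldots,b_n)$ with $|A'|$. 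Demanding both supports be less than $n/2$ would force $|A| + |A'| > n$, contradicting the disjointness bound. Therefore $t = 1_T$, the two representatives coincide, and the lemma follows.

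No serious obstacle is anticipated; the proof is essentially the pigeonhole observation that the coordinates where one representative is trivial and those where another is trivial cannot overlap when the two are distinct, so each coordinate of $\{1,\ldots,n\}$ can contribute a ``trivial slot'' to at most one representative.
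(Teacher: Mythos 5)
Your proof is correct and takes essentially the same approach as the paper: both hinge on the observation that if $v = (t,\ldots,t)\bar{\delta}$ with $t \neq 1_T$, then $v$ is nontrivial at every coordinate where $\bar{\delta}$ is trivial. You phrase this symmetrically as a disjointness/pigeonhole statement about the sets of trivial coordinates, while the paper argues directly that the second representative acquires support $> n/2$, but the content is identical.
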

\begin{proof}
Let $x\in\Omega$ and suppose that $x=D\overline{\delta}$ with $\overline{\delta}$ of support at most $(n-1)/2$.
Any representative of the coset $x$ is of the form $v=(t,\ldots,t)\bar{\delta}$ for some $(t,\ldots,t) \in D$.  Since the support of $\bar{\delta}$ is at most $(n-1)/2$, if $t\neq 1_T$ then $v$ has strictly more than half its entries equal to $t.1_T = t\neq 1_T$ and hence has support greater than $n/2$.
\end{proof}

\begin{constr} \label{SDincreasing1s} {\em
Let $\Omega$, $T$ and $G = T \Wr S_n$ be as in Definition \ref{SDgroup}. Let $\alpha$ be an involution in $T$, and for $1 \leq c \leq \lfloor (n-1)/4\rfloor$, let $\bar{\alpha}_{c} = (\underbrace{\alpha,\ldots,\alpha}_{2c},\underbrace{1_T,\ldots,1_T}_{n-2c})$, and let
$$x_{c} = [\bar{\alpha}_{c}] = D\bar{\alpha}_{c} \in \Omega.$$
Let $\Sigma = \{x_{c} \, | \, 1 \leq c \leq \lfloor(n-1)/4\rfloor\}$, and note that by Lemma \ref{uniquerep}, the elements of $\Sigma$ are pairwise distinct (that is, $|\Sigma| = \lfloor(n-1)/4\rfloor$).  

Let $X_1 = \Omega$, let $\Gamma_1 = (X_1,\ast,t)$, where $\ast$ consists of the pairs $(x,x)$ for $x \in X_1$, and $t(x) = 1$ for all $x \in X_1$. Let $K_1 = \{x_1\}$.  For $1 < b \leq \lfloor(n-1)/4\rfloor$, suppose we have constructed $\Gamma_{b-1}$ and chamber $K_{b-1} = \{x_1, \ldots, x_{b-1}\}$.  Let $\Gamma_b = \Inc(\Gamma_{b-1},G,K_{b-1},\Omega,x_{b},b)$ as in Construction \ref{balloon}, and $K_b = K_{b-1} \cup \{x_{b}\}$.
}
\end{constr}

Let $G_0 = \langle D, S_n \rangle$ and note that for each $x_s \in \Sigma$, the point stabiliser $G_{x_s}$ is equal to $h_s^{-1} G_0 h_s$ where
\begin{eqnarray} 
h_s & = & (\underbrace{\alpha,\ldots,\alpha}_{2s},\underbrace{1_T,\ldots,1_T}_{n-2s}) \in T^n < G. \label{hs2}
\end{eqnarray} 
(This is because $G_0$ is the stabiliser in $G$ of $x_0 := [(1_T,\ldots,1_T)]$ and $x_s = x_0^{h_s}$.)

We need to understand the orbits $x_a^{G_{x_s}}$ where $x_s$, $x_a \in \Sigma$ with $s < a \leq \lfloor(n-1)/4\rfloor$.  Observe that each $y \in x_a^{G_{x_s}}$ is equal to $x_a^{h_s^{-1} \bar{t} \sigma h_s}$ for some $\bar{t}=(t,\dots,t) \in D < G$ and $\sigma \in S_n < G$.  Recall that $x_a = [\bar{\alpha}_a]$, where $\bar{\alpha}_a = (\underbrace{\alpha,\ldots,\alpha}_{2a},\underbrace{1_T,\ldots,1_T}_{n-2a})$.  We define a special representative for the coset $x_a^{h_s^{-1} \bar{t} \sigma h_s}$, namely
$$\Rep(x_a^{h_s^{-1} \bar{t} \sigma h_s}) = (\underbrace{1_T,\ldots,1_T}_{2s},\underbrace{\alpha^t,\ldots,\alpha^t}_{2a-2s},\underbrace{1_T,\ldots,1_T}_{n-2a})^\sigma(\underbrace{\alpha,\ldots,\alpha}_{2s},1_T,\ldots,1_T),$$
(where $\alpha^t = t^{-1} \alpha t$).  It can be checked easily that $\Rep(x_a^{h_s^{-1} \bar{t} \sigma h_s})$ is an element of the coset $x_a^{h_s^{-1} \bar{t} \sigma h_s}$.

\begin{lemma} \label{repdag}
Let $x_s, x_a \in \Sigma$ with $s < a \leq \lfloor(n-1)/4\rfloor$, and let $h_s^{-1} \bar{t} \sigma h_s \in G_{x_s}$.  Then the support of $\Rep(x_a^{h_s^{-1} \bar{t} \sigma h_s})$ is at most $2a \leq (n-1)/2$.  Moreover $\Rep(x_a^{h_s^{-1} \bar{t} \sigma h_s})$ is the unique representative of $x_a^{h_s^{-1} \bar{t} \sigma h_s}$ with support at most $(n-1)/2$.
\end{lemma}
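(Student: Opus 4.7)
The plan is to establish the two assertions of the lemma separately: first the support bound on $\Rep(x_a^{h_s^{-1}\bar t\sigma h_s})$, and then to deduce uniqueness immediately from Lemma~\ref{uniquerep}. The only real computation lies in bounding the support, and that reduces to a simple subadditivity argument applied to the explicit factorisation of the representative given just before the lemma.

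For the support bound, I would write
$$\Rep(x_a^{h_s^{-1}\bar t\sigma h_s}) \;=\; v^{\sigma}\cdot h_s,$$
where $v=(\underbrace{1_T,\ldots,1_T}_{2s},\underbrace{\alpha^t,\ldots,\alpha^t}_{2a-2s},\underbrace{1_T,\ldots,1_T}_{n-2a})$ and $h_s$ is as in~(\ref{hs2}). The key observation is that the support of an $n$-tuple $xy\in T^n$ is contained in $\supp(x)\cup\supp(y)$, since a coordinate at which both $x$ and $y$ are $1_T$ contributes $1_T$ to the product. Now $\supp(h_s)=\{1,\ldots,2s\}$ has size $2s$, and $\supp(v^{\sigma})$ is the image under $\sigma$ of $\supp(v)=\{2s+1,\ldots,2a\}$, and hence has size exactly $2a-2s$. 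Therefore
$$|\supp(\Rep(x_a^{h_s^{-1}\bar t\sigma h_s}))|\;\le\;(2a-2s)+2s\;=\;2a.$$
Since $a\le\lfloor(n-1)/4\rfloor$ by hypothesis, we have $2a\le(n-1)/2$, proving the first assertion. (The paper has already noted that $\Rep(x_a^{h_s^{-1}\bar t\sigma h_s})$ does lie in the coset $x_a^{h_s^{-1}\bar t\sigma h_s}$, so that need not be re-verified.)

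For the moreover clause, observe that $2a\le(n-1)/2<n/2$, so $\Rep(x_a^{h_s^{-1}\bar t\sigma h_s})$ is a representative of the coset whose support is strictly less than $n/2$. By Lemma~\ref{uniquerep} the coset contains at most one such representative, so $\Rep(x_a^{h_s^{-1}\bar t\sigma h_s})$ is in fact the unique representative of the coset with support at most $(n-1)/2$. No serious obstacle arises: the whole argument is a one-line triangle inequality on supports together with a direct appeal to Lemma~\ref{uniquerep}, and the only point requiring care is the conventional remark that permuting the coordinates by $\sigma$ preserves the cardinality of the support.
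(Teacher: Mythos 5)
Your proof is correct and matches the paper's argument essentially line for line: both compute the support of $v^{\sigma}h_s$ by noting that $v$ has support $2a-2s$, that $\sigma$ permutes coordinates and so preserves the cardinality of the support, and that right-multiplication by $h_s$ can only enlarge the support by the $2s$ coordinates on which $h_s$ is nontrivial, giving the bound $2a$; both then invoke Lemma~\ref{uniquerep} for uniqueness. The only cosmetic difference is that you make the subadditivity of support under pointwise products explicit, whereas the paper leaves it implicit.
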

\begin{proof}
First observe that $(\underbrace{1_T,\ldots,1_T}_{2s},\underbrace{\alpha^t,\ldots,\alpha^t}_{2a-2s},\underbrace{1_T,\ldots,1_T}_{n-2a})$ has support $2a-2s$.  Now, $\sigma$ permutes the entries of this $n$-tuple, preserving its support, and then multiplying on the right by $h_s$  right-multiplies the first $2s$ entries by $\alpha$.  Thus $\Rep(x_a^{h_s^{-1} \bar{t} \sigma h_s})$ has support at most $(2a-2s)+2s = 2a \leq (n-1)/2$.  The second assertion follows from Lemma \ref{uniquerep}.
\end{proof}

For $x_s, x_a \in \Sigma$ with $s < a \leq \lfloor(n-1)/4\rfloor$, and a subset $Y$ of $x_a^{G_{x_s}}$, define
$$\RS(Y) = \{\Rep(y) \, | \, y \in Y\}.$$
Note that Lemma \ref{repdag} implies that $\RS(Y)$ is well-defined and $|\RS(Y)| = |Y|$.

\begin{lemma} \label{SDdagconditions}
Let $x_s$, $x_a \in \Sigma$ with $s < a$.  Then $\RS(x_a^{G_{x_s}})$ consists only of vectors $\Rep(y)$ such that $\nona_{[1,2s]}(\Rep(y)) + \non 1_{T[2s+1,n]}(\Rep(y)) = 2a-2s$.
\end{lemma}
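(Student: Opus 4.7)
My plan is to prove this by direct computation, closely mirroring the strategy of Lemma \ref{dagconditions} in the product action setting. The key point is that $\Rep(y)$ has an explicit formula as a product of two specific elements of $T^n$, so one simply needs to track which coordinates are $\alpha$ and which are $1_T$ after forming this product.

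First I would take an arbitrary $y\in x_a^{G_{x_s}}$ and write $y = x_a^{h_s^{-1}\bar t\sigma h_s}$ for some $\bar t=(t,\dots,t)\in D$ and $\sigma\in S_n$, so that by the definition just before the lemma,
\[
\Rep(y) \;=\; (\underbrace{1_T,\ldots,1_T}_{2s},\underbrace{\alpha^t,\ldots,\alpha^t}_{2a-2s},\underbrace{1_T,\ldots,1_T}_{n-2a})^\sigma\,(\underbrace{\alpha,\ldots,\alpha}_{2s},1_T,\ldots,1_T).
\]
I would set $v$ equal to the first factor (after applying $\sigma$) and observe that, since $\sigma$ only permutes entries and $\alpha^t\neq 1_T$, the tuple $v$ has exactly $2a-2s$ entries different from $1_T$, each of them equal to $\alpha^t$. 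Write $d$ for the number of such non-$1_T$ entries of $v$ that lie in positions $1,\dots,2s$; then the remaining $2a-2s-d$ lie in positions $2s+1,\dots,n$.

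Next I would multiply $v$ on the right by $h_s=(\alpha,\ldots,\alpha,1_T,\ldots,1_T)$ and analyse the two coordinate ranges separately. In positions $2s+1,\dots,n$ the entries are unchanged, so $\non1_{T[2s+1,n]}(\Rep(y))=2a-2s-d$. In positions $1,\dots,2s$ the $d$ entries equal to $\alpha^t$ become $\alpha^t\alpha$, and the remaining $2s-d$ entries equal to $1_T$ become $\alpha$. The only subtle check is that the $d$ entries $\alpha^t\alpha$ are never equal to $\alpha$: indeed $\alpha^t\alpha=\alpha$ would force $\alpha^t=1_T$, impossible because $\alpha\neq 1_T$. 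Hence $\nona_{[1,2s]}(\Rep(y))=d$, and adding gives
\[
\nona_{[1,2s]}(\Rep(y)) + \non1_{T[2s+1,n]}(\Rep(y)) \;=\; d+(2a-2s-d) \;=\; 2a-2s,
\]
as required. Since the argument shows this for an arbitrary representative, there is no real obstacle; the only place where one has to be a little careful is the check that multiplying $\alpha^t$ by $\alpha$ in the first $2s$ coordinates does not accidentally produce $\alpha$ (which is where the involution hypothesis on $\alpha$ plays a role, via $\alpha\neq 1_T$).
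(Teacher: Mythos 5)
Your proof is correct and follows essentially the same line as the paper's: express $y=x_a^{h_s^{-1}\bar t\sigma h_s}$, use the explicit formula for $\Rep(y)$, count the $d$ non-identity entries of $v^\sigma$ falling in coordinates $1,\dots,2s$, and track separately what right-multiplication by $h_s$ does in the two coordinate ranges, including the key check that $\alpha^t\alpha\neq\alpha$. The only (minor) difference is that you spell out why $\alpha^t\alpha\neq\alpha$, whereas the paper asserts it without comment.
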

\begin{proof}
Let $y \in x_a^{G_{x_s}}$.  Then $y = x_a^{h_s^{-1} \bar{t} \sigma h_s}$ for some $\bar{t}=(t,\dots,t) \in \diag(T^n) < G$ and $\sigma \in S_n < G$, and $\Rep(y)=v^{\sigma}h_s$ where
$$v = (\underbrace{1_T,\ldots,1_T}_{2s},\underbrace{\alpha^t,\ldots,\alpha^t}_{2a-2s},\underbrace{1_T,\ldots,1_T}_{n-2a}).$$
Since $\sigma$ only permutes entries, the $n$-tuple $v^{\sigma}$ has exactly $2a-2s$ entries equal to $\alpha^t$, and the remaining entries all equal to $1_T$.  Let $d$ be the number of entries equal to $\alpha^t$ in coordinates $1$ to $2s$ of $v^\sigma$.  Then $d \leq 2a-2s$, and exactly $2a-2s-d$ entries from $2s+1$ to $n$ are equal to $\alpha^t$; so $\non 1_{T[2s+1,n]}(v^{\sigma}) = 2a-2s-d$.  When we right-multiply by $h_s$ (to obtain $\Rep(y)$), the entries of $v^{\sigma}$ from $1$ to $2s$ equal to $1_T$ become $\alpha$, and the remaining $d$ entries from $1$ to $2s$ (all equal to $\alpha^t$), all become $\alpha^t\alpha \neq \alpha$.  Hence $\nona_{[1,2s]}(\Rep(y)) = d$.  The entries $2s+1$ to $n$ of $v^\sigma$ are unchanged by $h_s$, so $\non 1_{T[2s+1,n]}(\Rep(y)) = \non 1_{T[2s+1,n]}(v^{\sigma}) = 2a-2s-d$.  It follows that $\nona_{[1,2s]}(\Rep(y)) + \non 1_{T[2s+1,n]}(\Rep(y)) = 2a-2s$.
\end{proof}

\begin{corollary} \label{SDdifficultorbitscor}
Let $x_i, x_j$ and $x_a$ be in $\Sigma$ with $1 \leq i < j < a \leq (n-1)/4$, and let $y \in x_a^{G_{x_i}} \cap x_a^{G_{x_j}}$.  Then entries $2i+1$ to $2j$ of $\Rep(y)$ are all $\alpha$.
\end{corollary}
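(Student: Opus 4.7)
The plan is to run the same argument as Corollary~\ref{difficultorbitscor}, with the role of $\beta$ played by $1_T$ and the $n$-tuple $y$ replaced everywhere by its canonical representative $\Rep(y)$. Since $y \in x_a^{G_{x_i}} \cap x_a^{G_{x_j}}$, Lemma~\ref{repdag} says that $\Rep(y)$ is a single well-defined element of $T^n$ (the unique representative of the coset $y$ with support at most $(n-1)/2$), so it makes sense to speak of its individual entries.

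First I would apply Lemma~\ref{SDdagconditions} twice. Taking $s = i$ gives
$$\nona_{[1,2i]}(\Rep(y)) + \non 1_{T\,[2i+1,n]}(\Rep(y)) = 2a - 2i,$$
and taking $s = j$ gives
$$\nona_{[1,2j]}(\Rep(y)) + \non 1_{T\,[2j+1,n]}(\Rep(y)) = 2a - 2j.$$
These are exactly the two defining conditions of the sets $\Lambda_i, \Lambda_j$ in Lemma~\ref{difficultorbits2} once one substitutes $\alpha \leftrightarrow \alpha$, $\beta \leftrightarrow 1_T$, and re-indexes by $(i,j,a) \mapsto (2i, 2j, 2a)$. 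The hypothesis $a > j$ of Lemma~\ref{difficultorbits2} translates to $2a > 2j$, which holds because $j < a$ by assumption, while the range restriction $2j \le n$ follows from $a \leq (n-1)/4$, i.e.\ $2j < 2a \le (n-1)/2 < n$.

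Then I would invoke Lemma~\ref{difficultorbits2} directly: its conclusion, in the re-indexed setting, is precisely that entries $2i+1$ through $2j$ of $\Rep(y)$ are all equal to $\alpha$, which is what the corollary asserts. There is no real obstacle here; the only subtlety worth flagging is that one must pass to $\Rep(y)$ before applying Lemma~\ref{difficultorbits2}, since that lemma is a combinatorial statement about fixed $n$-tuples and not about cosets. Lemma~\ref{uniquerep} (via Lemma~\ref{repdag}) is what justifies this passage.
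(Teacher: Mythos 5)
Your proposal is correct and follows exactly the paper's route: apply Lemma~\ref{SDdagconditions} for $s=i$ and $s=j$, observe that $\Rep(y)$ is a well-defined $n$-tuple in $T^n$, and then invoke Lemma~\ref{difficultorbits2} with $\beta = 1_T$ and $(i,j,a)$ replaced by $(2i,2j,2a)$. The only difference is that you spell out the hypothesis checks (range and $a>j$) and the well-definedness of $\Rep(y)$ more explicitly than the paper's one-line version, which is a welcome clarification but not a different argument.
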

\begin{proof}
By Lemma \ref{SDdagconditions} $\nona_{[1,2s]} (\Rep(y)) + \non 1_{T[2s+1,n]}(\Rep(y)) = 2a-2s$ for $s \in \{i,j\}$. Note that $\Rep(y) \in T^n$. Thus the conditions of Lemma \ref{difficultorbits2} hold with $\Delta=T$, $\beta=1_T$ and parameters $(i,j,n)$ being $(2i,2j,2a)$. The result now follows from Lemma \ref{difficultorbits2}.
\end{proof}

For an $n$-tuple $v$ we write $v_r$ to denote the $r^{\mathrm{th}}$-entry of $v$.

\begin{lemma} \label{SDincreasing1slem}
For $1 \leq b \leq (n-1)/4$, let $\Gamma_b$ be the pregeometry given by Construction {\em \ref{SDincreasing1s}}.  Then
\begin{itemize}
\item[(a)] $\Gamma_b$ is a geometry of rank $b$ and $G$ is flag-transitive on $\Gamma_b$,
\item[(b)] $\Gamma_b$ is thick if $b > 1$, and
\item[(c)] if $b>1$ then the rank $2$ truncations of $\Gamma_b$ are connected (and hence $\Gamma_b$ is a \ourgeoms{G}{\Omega}).
\end{itemize}
\end{lemma}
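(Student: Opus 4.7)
The proof structure parallels that of Lemma \ref{increasing1slem}, with all calculations performed at the level of coset representatives via the $\Rep$ function from Section \ref{SDtype}.

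For part (a), the plan is to induct on $b$; the base case $b=1$ is trivial. For the inductive step, I verify the hypothesis of Lemma \ref{balloonchambtrans}(b): given a nonempty $Q\subseteq K_{b-1}$ with smallest index $i$ and largest index $j$, I must show that $\bigcap_{x_s\in Q}G_{x_s}$ acts transitively on $\bigcap_{x_s\in Q}x_b^{G_{x_s}}$. The case $i=j$ is immediate. When $i<j$, Corollary \ref{SDdifficultorbitscor} implies that every $y$ in the intersection satisfies $\Rep(y)_k=\alpha$ for all $k\in\{2i+1,\ldots,2j\}$, while Lemma \ref{repdag} bounds the support of $\Rep(y)$ by $2b$. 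Given $u,u'$ in the intersection, I write $u=x_b^{h_j^{-1}\bar{t}\sigma h_j}$ and $u'=x_b^{h_j^{-1}\bar{t}'\sigma' h_j}$ with $\bar{t},\bar{t}'\in D$ and $\sigma,\sigma'\in S_n$, and seek $z=\bar{c}\tau\in G_0=D\rtimes S_n$ such that $h_j^{-1}zh_j$ lies in $\bigcap_{x_s\in Q}G_{x_s}$ and carries $u$ to $u'$. A direct calculation, using $h_s^{-1}=h_s$ (since $\alpha$ is an involution), shows that $h_j^{-1}zh_j$ stabilises $x_s$ precisely when $z$ stabilises the coset represented by $(\underbrace{1_T,\ldots,1_T}_{2s},\underbrace{\alpha,\ldots,\alpha}_{2j-2s},\underbrace{1_T,\ldots,1_T}_{n-2j})$; this condition translates into $\tau$ preserving the set $\{2s+1,\ldots,2j\}$ setwise together with $c\in C_T(\alpha)$. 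Imposing this simultaneously for every $s\in\{i,\ldots,j-1\}$ collapses to the requirement that $\tau$ preserve each pair $\{2r-1,2r\}$ setwise for $r\in\{i+1,\ldots,j\}$ while acting freely on $\{1,\ldots,2i\}\cup\{2j+1,\ldots,n\}$. With this freedom, I construct $z$ by matching up the positions of the non-trivial entries of $\Rep(u)$ and $\Rep(u')$ outside $[2i+1,2j]$ via $\tau$, and by choosing $c\in C_T(\alpha)$ to reconcile the $T$-components arising from $\bar{t}$ and $\bar{t}'$.

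For part (b), fix a corank-one flag $K'=K_b\setminus\{x_i\}$ and observe that every $x_\ell\in K'$ has constant value on the coordinates $\{2i-1,2i,2i+1,2i+2\}$ (all $\alpha$ when $\ell\geq i+1$, all $1_T$ when $\ell\leq i-1$), so $G_{(K')}$ contains $\Sym(\{2i-1,2i,2i+1,2i+2\})\leq S_n$. The six placements of two $\alpha$'s and two $1_T$'s among these four coordinates yield six distinct cosets in the orbit of $x_i$ under $G_{(K')}$, distinctness being guaranteed by Lemma \ref{uniquerep} since each image has support at most $2b\leq(n-1)/2$. Hence $K'$ is contained in at least six chambers and $\Gamma_b$ is thick. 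Part (c) is immediate: $G$ is primitive of type SD, so $\langle G_{x_i},G_{x_j}\rangle=G$ for distinct $x_i,x_j\in\Sigma$, and Lemma \ref{balloonlem}(b) delivers the result.

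The main anticipated obstacle is the transitivity step in part (a): because $G_0=D\rtimes S_n$ is far more rigid than the wreath product $H_\beta\Wr S_n$ appearing in the analogous product action construction (only a single $T$-element $c$ is available across all coordinates), constructing $z$ requires careful coset-level bookkeeping. The involution property of $\alpha$ and the telescoping of the set-preservation conditions on $\tau$ down to the single pair-preservation requirement on $\{2i+1,\ldots,2j\}$ are both essential to making this go through.
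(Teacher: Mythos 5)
Your overall scaffolding matches the paper's proof: induction via Lemma \ref{balloonchambtrans}(b), restriction of the intersection via Corollary \ref{SDdifficultorbitscor} so that entries $2i+1$ to $2j$ of representatives are $\alpha$, an element $z=h_j^{-1}(\bar c\tau)h_j$ with $\tau$ constrained to preserve the middle block, and the thickness and connectivity arguments in parts (b) and (c) are both correct as you state them. However, there is a genuine gap in the transitivity step of part (a).

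You write $u=x_b^{h_j^{-1}\bar t\sigma h_j}$ and $u'=x_b^{h_j^{-1}\bar t'\sigma'h_j}$ and plan to ``choose $c\in C_T(\alpha)$ to reconcile the $T$-components arising from $\bar t$ and $\bar t'$.'' Concretely, $\Rep(u)=v\,h_j$ with $v=(1_T,\ldots,1_T,\alpha^t,\ldots,\alpha^t,1_T,\ldots,1_T)^\sigma$, and similarly $\Rep(u')$ involves $\alpha^{t'}$; for $z=h_j^{-1}\bar c\tau h_j$ to send $u$ to $u'$ one needs $(\alpha^t)^c=\alpha^{t'}$. But the stabiliser condition forces $c\in C_T(\alpha)$, and there is no a priori reason that $\alpha^t$ and $\alpha^{t'}$, though both $T$-conjugates of $\alpha$, should be conjugate under the subgroup $C_T(\alpha)$. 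What actually happens is stronger and is not free: using the $\ell=i$ representation of $u$ together with Corollary \ref{SDdifficultorbitscor}, one can show that $\alpha^t=\alpha^{t'}=\alpha$ (this is Claim 1 in the paper's proof). Once that is established, the reconciliation becomes trivial with $c=1_T$ and indeed $z$ may be taken to be $h_j^{-1}\tau h_j$ with $\tau\in S_n$ alone (no diagonal factor at all); the rigidity of $D\rtimes S_n$ you identify as the obstacle is thereby resolved, not by a clever choice of $c$, but by proving $c$ is unnecessary. Without this claim the construction of $z$ is not justified. A secondary, minor inaccuracy: your ``telescoping'' of the set-preservation constraints to pair-preservation is only valid when $Q$ contains $x_s$ for every $s$ in $[i,j]$; for general $Q\subseteq K_{b-1}$ the constraints are weaker, and it is cleaner (and what the paper does) simply to require $\tau$ to fix the block $\{2i+1,\ldots,2j\}$ pointwise, which satisfies all the constraints simultaneously.
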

\begin{proof}
Part (a): The proof is by induction on $b$.  As $\ast$ comprises $(x,x)$ for $x \in X_1$, the statement is trivially true for $\Gamma_1$.  Suppose that $1 < b \leq \lfloor(n-1)/4\rfloor$ and that $\Gamma_{b-1}$ is a geometry with $G$ flag-transitive on $\Gamma_{b-1}$.  We check that the condition in Lemma \ref{balloonchambtrans}(b) holds for $\Gamma_b = \Inc(\Gamma_{b-1},G,K_{b-1},\Omega,x_{b},b)$. Let $Q$ be a non-empty subset of $K_{b-1}= \{x_1,\ldots,x_{b-1}\}$.  We need to show that $\bigcap_{x_s \in Q} G_{x_s}$ is transitive on $\bigcap_{x_s \in Q} x_b^{G_{x_s}}$.

Let $i$ be the smallest subscript in $Q$ and $j$ the largest.  If $i = j$ then $G_{x_i} = \bigcap_{x_s \in Q} G_{x_s}$ is transitive on $x_b^{G_{x_i}} = \bigcap_{x_s \in Q} x_b^{G_{x_s}}$.  So we may assume that $i < j$.  It follows from Corollary \ref{SDdifficultorbitscor} that:

\begin{eqnarray}
& & \text{all elements of $\RS(\bigcap_{x_s \in Q} x_b^{G_{x_s}})$ have entries $2i+1$ to $2j$ equal to $\alpha$.} \label{i+1toj}
\end{eqnarray}

Let $u$ and $u'$ be elements of $\bigcap_{x_s \in Q} x_b^{G_{x_s}}$.  Recall that for each $x_s \in Q$, $G_{x_s} = h_s^{-1} G_0 h_s$ where $G_0 = \langle D,S_n \rangle$ and $h_s$ is as in (\ref{hs2}) (just after Construction \ref{SDincreasing1s}).  Thus for each $y \in \bigcap_{x_s \in Q} x_b^{G_{x_s}}$ and $x_s \in Q$, there exists $\bar{t} \in D$ and $\sigma \in S_n$ such that $y = x_b^{h_s^{-1} \bar{t} \sigma h_s}$.

\nl
\textbf{Claim 1}: {\em Let $\bar{t}\sigma \in G_0$ be such that $u = x_b^{h_\ell^{-1} \bar{t}\sigma h_\ell}$ for some $x_\ell \in Q$, where $\bar{t} = (t,\ldots,t) \in D$ and $\sigma \in S_n$.  Then $t^{-1}\alpha t = \alpha$.}

\nl
Proof of Claim 1: Since $u \in x_b^{G_{x_i}}$ (where, as mentioned earlier, $i$ denotes the smallest subscript in $Q$) there exists $\bar{t}'\sigma' \in G_0$ such that $u = x_b^{h_i^{-1} \bar{t}'\sigma' h_i}$ with $\bar{t}'=(t',\ldots,t')\in D$ and $\sigma'\in S_n$.  Now $$\Rep(u) = \Rep(x_b^{h_i^{-1} \bar{t}'\sigma' h_i}) = (\underbrace{1_T,\ldots,1_T}_{2i},\underbrace{\alpha^{t'},\ldots,\alpha^{t'}}_{2a-2i},\underbrace{1_T,\ldots,1_T}_{n-2a})^{\sigma'}(\underbrace{\alpha,\ldots,\alpha}_{2i},\underbrace{1_T,\ldots,1_T}_{n-2i}).$$

By (\ref{i+1toj}), entries $2i+1$ to $2j$ of $\Rep(u)$ are equal to $\alpha$.  Since $\sigma'$ only permutes entries, and since right-multiplication by $(\underbrace{\alpha,\ldots,\alpha}_{2i},\underbrace{1_T,\ldots,1_T}_{n-2i})$ alters only the entries $1$ to $2i$, it follows from the above equation that the entries $2i+1$ to $2j$ of $\Rep(u)$ must all equal $\alpha^{t'}$, and this shows that $\alpha^{t'} = \alpha$.  Moreover, since $2a-2i > 2j-2i$ (and using the fact that $\alpha^{t'} = \alpha$), the vector $ (\underbrace{1_T,\ldots,1_T}_{2i},\underbrace{\alpha^{t'},\ldots,\alpha^{t'}}_{2a-2i},\underbrace{1_T,\ldots,1_T}_{n-2a})^{\sigma'}$ has at least one entry outside coordinates $2i+1$ to $2j$ which is equal to $\alpha$.  From this we get

\begin{eqnarray}
\label{abconds} & & \text{\parbox[t]{5in}{(a) There exists $r \in [1,n]$ such that either $r > 2j$ and $\Rep(u)_r = \alpha$, or $r \leq 2i$ and $\Rep(u)_r = \alpha^2$.}}\\
\nonumber & & \text{\parbox[t]{5in}{(b) If $r \leq 2i$ then $\Rep(u)_r \in \{\alpha,\alpha^2\}$ while if $r > 2j$ then $\Rep(u)_r \in \{1_T,\alpha\}$.}}  
\end{eqnarray}

If $\ell=i$, then the argument above with $\bar{t}'\sigma'=\bar{t}\sigma$ proves that $\alpha^t=\alpha$.  Suppose then that $\ell \neq i$, so $i < \ell \leq j$ (since $x_\ell \in Q$), and that $u = x_b^{h_\ell^{-1} \bar{t}\sigma h_\ell}$. Then $\Rep(u) = \Rep(x_b^{h_\ell^{-1} \bar{t}\sigma h_\ell}) = v.h_\ell$ where 
$$
v = (\underbrace{1_T,\ldots,1_T}_{2\ell},\underbrace{\alpha^{t},\ldots,\alpha^{t}}_{2a-2\ell},\underbrace{1_T,\ldots,1_T}_{n-2a})^{\sigma}\quad \mbox{and}\quad h_\ell = (\underbrace{\alpha,\ldots,\alpha}_{2\ell},\underbrace{1_T,\ldots,1_T}_{n-2\ell}).
$$

If $\ell < j$ then $\alpha = \Rep(u)_{2j} = v_{2j}.(h_\ell)_{2j} = v_{2j} \in \{1_T,\alpha^t\}$ and hence by (\ref{i+1toj}), $\alpha = v_{2j} = \alpha^t$ and Claim 1 is proved.  This leaves $\ell = j$.  Now for $r > 2j$, $\Rep(u)_r = v_r \in \{1_T,\alpha^t\}$, so if there exists $r > 2j$ with $\Rep(u)_r = \alpha$ then again we have $\alpha = \alpha^t$ proving Claim 1.  Suppose this is not the case.  Then by (\ref{abconds}), for some $r \leq 2i$, $\Rep(u)_r = \alpha^2$. However, we also have $\Rep(u)_r = v_r (h_\ell)_r = v_r \alpha \in \{\alpha,\alpha^t\alpha\}$. Since $\alpha \neq \alpha^2$ it follows that $\alpha^t\alpha = \alpha^2$, whence again $\alpha^t = \alpha$.

\nl
\textbf{Claim 2}: {\em There exists $z \in \bigcap_{x_s \in Q} G_{x_s}$ mapping $u$ to $u'$}.

\nl
Proof of Claim 2: Both $u$ and $u'$ are in $x_b^{G_{x_j}}$, and so $u = x_b^{h_j^{-1} {\bar{t}} \sigma h_j}$ and $u' = x_b^{h_j^{-1} {\bar{t}'} {\sigma'} h_j}$ for some $\bar{t} = (t,\ldots,t), \bar{t}' = (t',\ldots,t') \in D$ and $\sigma$, $\sigma' \in S_n$.

Recalling from Claim 1 that $\alpha^t = \alpha^{t'} = \alpha$, we have 
\begin{eqnarray} \label{Repuconds}
\text{(a)} & & \Rep(u) = \Rep(x_b^{h_j^{-1} \bar{t}\sigma h_j}) = v.h_j\\ 
\nonumber \text{(b)} & & \Rep(u') = \Rep(x_b^{h_j^{-1} \bar{t}'\sigma' h_j}) = v'.h_j
\end{eqnarray}
where $v = (\underbrace{1_T,\ldots,1_T}_{2j},\underbrace{\alpha,\ldots,\alpha}_{2a-2j},\underbrace{1_T,\ldots,1_T}_{n-2a})^{\sigma}$, $v' = (\underbrace{1_T,\ldots,1_T}_{2j},\underbrace{\alpha,\ldots,\alpha}_{2a-2j},\underbrace{1_T,\ldots,1_T}_{n-2a})^{\sigma'}$, and $h_j = (\underbrace{\alpha,\ldots,\alpha}_{2j},\underbrace{1_T,\ldots,1_T}_{n-2j})$.  Observe that $v$ and $v'$ both have the following properties:
\begin{itemize}
\item[(i)] entries $2i+1$ to $2j$ are all equal to $1_T$ (this follows from (\ref{i+1toj}) and (\ref{Repuconds}) and the fact that $\alpha^2=1_T$);
\item[(ii)] exactly $2a-2j$ entries are $\alpha$ and $n-2a+2j$ entries are $1_T$.
\end{itemize}

Hence there exists $\tau \in S_n < G_0$ such that $\tau$ fixes pointwise every coordinate from $2i+1$ to $2j$, and such that $\tau$ maps 
$$
(\underbrace{1_T,\ldots,1_T}_{2j},\underbrace{\alpha,\ldots,\alpha}_{2a-2j},\underbrace{1_T,\ldots,1_T}_{n-2a})^{\sigma}\quad \mbox{to}\quad (\underbrace{1_T,\ldots,1_T}_{2j},\underbrace{\alpha,\ldots,\alpha}_{2a-2j},\underbrace{1_T,\ldots,1_T}_{n-2a})^{\sigma'}.$$

Let $z$ be the element $h_j^{-1} \tau h_j$ of $G_{x_j}$.
Then 
\begin{eqnarray*}
\Rep(u^{z}) & = & \Rep(x_b^{h_j^{-1} \bar{t}\sigma h_jh_j^{-1} \tau h_j})= \Rep(x_b^{h_j^{-1} \bar{t}\sigma\tau h_j})\\
& = & (\underbrace{1_T,\ldots,1_T}_{2j},\underbrace{\alpha,\ldots,\alpha}_{2a-2j},\underbrace{1_T,\ldots,1_T}_{n-2a})^{\sigma\tau}(\underbrace{\alpha,\ldots,\alpha}_{2j},\underbrace{1_T,\ldots,1_T}_{n-2j})\\
& = & (\underbrace{1_T,\ldots,1_T}_{2j},\underbrace{\alpha,\ldots,\alpha}_{2a-2j},\underbrace{1_T,\ldots,1_T}_{n-2a})^{\sigma'}(\underbrace{\alpha,\ldots,\alpha}_{2j},\underbrace{1_T,\ldots,1_T}_{n-2j})\\
& = & \Rep(u').
\end{eqnarray*}
It follows from Lemma \ref{repdag} that $u^z = u'$.  Finally we prove that $z \in \bigcap_{x_s \in Q} G_{x_s}$.

Let $x_s \in Q$, and observe that since $i \leq s \leq j$ and $\tau$ fixes each coordinate from $2i+1$ to $2j$, the element $\tau$ fixes the vector $(\underbrace{1_T,\ldots,1_T}_{2s},\underbrace{\alpha^{-1},\ldots,\alpha^{-1}}_{2j-2s},\underbrace{1_T,\ldots,1_T}_{n-2j})$.  Thus we have the following. 
\begin{eqnarray*}
x_s^z & = & [(\underbrace{\alpha,\ldots,\alpha}_{2s},\underbrace{1_T,\ldots,1_T}_{n-2s})]^{h_j^{-1}\tau h_j}\\
& = & [(\underbrace{1_T,\ldots,1_T}_{2s},\underbrace{\alpha^{-1},\ldots,\alpha^{-1}}_{2j-2s},\underbrace{1_T,\ldots,1_T}_{n-2j})^{\tau}(\underbrace{\alpha,\ldots,\alpha}_{2j},\underbrace{1_T,\ldots,1_T}_{n-2j})]\\
& = & [(\underbrace{\alpha,\ldots,\alpha}_{2s},\underbrace{1_T,\ldots,1_T}_{n-2s})].
\end{eqnarray*}
Hence $z$ fixes all the $x_s$ in $Q$, and so $\bigcap_{x_s \in Q} G_{x_s}$ is transitive on $\bigcap_{x_s \in Q} x_b^{G_{x_s}}$.  Hence by Lemma \ref{balloonchambtrans}, $\Gamma_b$ is a geometry and $G$ is flag-transitive on $\Gamma_b$.  By induction, the result holds for each $b \leq (n-1)/4$.

\nl
Part (b):  Let $i$ be a type in $\Gamma_b$, so $1 \leq i \leq b$, and suppose that $b > 1$.  Let $K_b = \{x_1,\ldots,x_{b}\}$ be the chamber of $\Gamma_b$ given by Construction \ref{increasing1s}, and write $K' = K_b \backslash \{x_{i}\}$.  Then all elements $x_\ell$ of $K'$ are such that in $\Rep(x_\ell)$ entries $2i-1$ to $2i+2$ are either all equal to $\alpha$ or all equal to $1_T$.  Hence $G_{(K')}$ contains the subgroup $\Sym(\{2i-1,\ldots,2i+2\}) \leq S_n$ acting on the coordinates.  The orbit of
$$\Rep(x_{i}) = (\alpha,\ldots,\alpha,\underbrace{\alpha,\alpha,1_T,1_T,}_{\mathrm{entries} \; 2i-1 \; \mathrm{to} \; 2i+2}1_T,\ldots,1_T)$$
under the subgroup $\Sym(\{2i-1,\ldots,2i+2\})$ contains six distinct $n$-tuples, which (by Lemma \ref{uniquerep}) move $x_i$ to six distinct elements of $\Omega$.  Hence $|x_{i}^{G_{(K')}}| \geq 6$, and so the co-rank $1$ flag $K'$ is contained in at least 6 chambers.  Since $G$ is flag-transitive on $\Gamma_b$, it follows that $\Gamma_b$ is thick.

\nl
Part (c): Let $i,j$ be distinct types. As $G$ is primitive, we have $\langle G_{x_i}, G_{x_j} \rangle = G$, and it follows from Lemma \ref{balloonlem}(b) that the $\{i,j\}$-truncation is connected.
\end{proof}

\begin{remark}
\label{rem:SDdiag}
We do not determine the diagrams for the geometries yielded by  Construction \ref{SDincreasing1s}. Note that by \cite[Proposition 18.1]{priminc}, a primitive group with O'Nan-Scott type SD and whose minimal normal subgroup is isomorphic to $T^k$ with $k\geq 3$ cannot contain a primitive group of any other O'Nan-Scott type, and is not contained in a primitive group of another O'Nan-Scott type (other than the full alternating or symmetric group). Thus the geometries obtained are different to those obtained from Construction \ref{increasing1s} and Construction \ref{HSconstruction}.
\end{remark}

\section{Proof of Theorem \ref{maintheorem}} \label{mainthmproof}
As mentioned in the introduction we are concerned with the question of whether the rank of a thick flag-transitive geometry can be unboundedly large for a given O'Nan-Scott type of primitive group.   In this section we outline how the various constructions given in Sections \ref{genericconstr} to \ref{SDtype} enable us to prove Theorem \ref{maintheorem} for each of the O'Nan-Scott types.

\medskip\noindent
\textsc{Type AS}:  The projective space $\PG(k,q)$ is a thick geometry of rank $k$ upon which $G=\PGammaL(k+1,q)$ is flag-transitive. The group $G$ acts primitively of O'Nan-Scott type AS on each set of elements of a given type. However, for Theorem \ref{maintheorem} we require all the actions of our group $G$ to be permutationally isomorphic, which is not the case here.

Let $m = k+2$, and let $G = S_m$ acting on $\Omega = \{1,2,\ldots,m\}$.  Taking $b = m-2$, we use Construction \ref{ASconstruction} to construct a pregeometry $\Gamma$ of rank $b = k$.  By Lemma \ref{ASconstructionlem}, $\Gamma_b$ is a \ourgeoms{G}{\Omega} with $G^{\Omega}$ primitive of type AS.

\medskip\noindent
\textsc{Type PA}:
Theorem \ref{maintheorem} for the PA case follows from the following lemma.

\begin{lemma}
For each positive integer $k\geq 1$ and nonabelian simple group $T$, there exist a positive integer $n\geq 2$, a primitive group $G$ of O'Nan-Scott type \textrm{PA} with socle $T^n$ on a set $\Omega$ and a \ourgeoms{G}{\Omega} of rank $k$.
\end{lemma}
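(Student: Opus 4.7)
My plan is to instantiate Construction \ref{increasing1s} with a component $H$ of O'Nan-Scott type AS and socle $T$, and then invoke Table \ref{PAexpansions} together with Lemma \ref{increasing1slem} to conclude. The proof is essentially an assembly of results already proved in the paper: the only freedom to exercise is the choice of component $H$ (which determines the type PA via the table) and of the wreath parameter $n$ (which must be large enough that $k \leq \lfloor n/2\rfloor - 1$).

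First I would choose a proper maximal subgroup $M$ of $T$, which exists because $T$ is nonabelian simple, and let $H = T$ act on the set $\Delta$ of right cosets of $M$ in $T$ by right multiplication. Since $M$ is maximal and non-trivial, $H$ is primitive but not regular on $\Delta$, and $H$ is of O'Nan-Scott type AS with socle $T$. Next I would take $n = 2k+2$, so that $n \geq 2$ and $\lfloor n/2\rfloor - 1 = k$, and form the wreath product $G = H \Wr S_n = T \Wr S_n$ acting on $\Omega = \Delta^n$ in its product action. By \cite[Theorem 4.5]{CameronBook}, $G$ is primitive on $\Omega$, and by Table \ref{PAexpansions}, $G$ has O'Nan-Scott type PA with socle $T^n$, as required.

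Finally I would apply Construction \ref{increasing1s} with these $H$, $n$, $G$ and $\Omega$ to produce the pregeometry $\Gamma_k$, and then invoke Lemma \ref{increasing1slem} to conclude that $\Gamma_k$ is a \ourgeoms{G}{\Omega} of rank $k$. For $k \geq 2$ all three parts of the lemma apply directly. For the degenerate base case $k = 1$, Lemma \ref{increasing1slem}(a) still yields that $\Gamma_1$ is a rank $1$ geometry on which $G$ acts flag-transitively; thickness follows immediately from $|\Omega| = |\Delta|^n \geq 2^{2k+2} \geq 4$, and the connected rank $2$ truncation condition is vacuous in rank $1$. Since all the structural work has been done in Sections \ref{genericconstr} and \ref{PAtype}, no substantial obstacle remains; the only subtlety is the separate treatment of $k = 1$ just described.
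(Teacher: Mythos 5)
Your proposal is correct and follows essentially the same route as the paper: take $n=2k+2$, let $T$ act primitively (hence non-regularly, since $T$ is nonabelian simple) on a set $\Delta$, form $G=T\Wr S_n$ in product action on $\Omega=\Delta^n$, read off type PA from Table~\ref{PAexpansions}, and invoke Construction~\ref{increasing1s} with Lemma~\ref{increasing1slem}. Your explicit handling of the degenerate $k=1$ case (where Lemma~\ref{increasing1slem}(b),(c) are stated only for $b>1$) is a small but legitimate extra precaution that the paper's proof glosses over; apart from that the two arguments are the same.
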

\begin{proof}
Let $n=2k+2$, let $T$ act primitively on a set $\Delta$ and let $G = T \Wr S_n$ act on $\Omega=\Delta^n$ in product action. As seen in Table \ref{PAexpansions}, $G$ is primitive of O'Nan-Scott type PA on $\Omega$ and $\soc(G)=T^n$. Taking $b = n/2-1=k$, we use Construction \ref{increasing1s} to construct a pregeometry $\Gamma$ of rank $b = k$.  By Lemma \ref{increasing1slem}, $\Gamma_b$ is a \ourgeoms{G}{\Omega}.
\end{proof}

Instead of varying the parameter $n$ to achieve arbitrary rank, we may also fix $n$ and vary $T$ to achieve the same outcome.

\begin{lemma}
Let $k\geq 1$ and $n\geq 2$ be positive integers. Then there exist a simple group $T$, a primitive group $G$ of O'Nan-Scott type \textrm{PA} with socle $T^n$ on a set $\Omega$ and a \ourgeoms{G}{\Omega} of rank $k$.
\end{lemma}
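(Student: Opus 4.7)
The plan is to fix $n$ and vary the simple group $T$ by letting $T = A_m$ for suitably large $m$. Concretely, I would choose $m = \max(k+2,\, 5)$ and let $H = S_m$ act naturally on $\Delta = \{1,\ldots,m\}$. Then $H$ is primitive on $\Delta$ of O'Nan-Scott type AS with socle $T := A_m$. Set $G = H \Wr S_n$ acting on $\Omega := \Delta^n$ in product action; by Table \ref{PAexpansions}, $G$ is primitive on $\Omega$ of type PA with socle $T^n$, as required.

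To construct the geometry, the strategy is to build a small $H^\Delta$-uniform geometry of rank $k$ and then lift it to a $G^\Omega$-uniform geometry of the same rank by taking its $n$th power. First, apply Construction \ref{ASconstruction} to the pair $(H, \Delta)$ with $b = k$ (permissible since $k \leq m-2$) to obtain a pregeometry $\Gamma$ of rank $k$. Assuming $k \geq 2$, Lemma \ref{ASconstructionlem} ensures that $\Gamma$ is a \ourgeoms{H}{\Delta}: it is a thick geometry with connected rank $2$ truncations, $H$ is flag-transitive on it, and by inspection of the construction each set of type $i$ elements is a copy of $\Delta$ on which $H$ acts in its natural way. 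Next, apply Construction \ref{con:prod1} to $\Gamma$ to produce $\Gamma^n$ of rank $k$, whose set of type $i$ elements is $\Delta^n = \Omega$. By Lemma \ref{lem:productup}, $\Gamma^n$ is a thick geometry with connected rank $2$ truncations on which $H \Wr S_n = G$ is flag-transitive; moreover, by the definition of Construction \ref{con:prod1}, the induced $G$-action on each set of type $i$ elements is the product action of $G$ on $\Omega$. Hence $\Gamma^n$ is the required \ourgeoms{G}{\Omega} of rank $k$.

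The case $k = 1$ can be handled directly: the rank $1$ pregeometry with element set $\Omega$ (and trivial incidence relation $x \ast x$) is a \ourgeoms{G}{\Omega} since $|\Omega| = m^n \geq 3$ yields thickness and the connected rank $2$ truncation condition is vacuous. The only real point to check in the general argument is that the permutational isomorphism between the $G$-action on each type and the $G$-action on $\Omega$ is preserved through both constructions; this is transparent because Construction \ref{con:prod1} builds each new type-$i$ element set as the Cartesian power $X_i^n$ equipped with the natural $H \Wr S_n$ product action, which agrees with the action on $\Omega = \Delta^n$ under the identification $X_i = \Delta$.
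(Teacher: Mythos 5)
Your proof is correct and follows essentially the same route as the paper: build an $H^\Delta$-uniform geometry of rank $k$ with $H=S_m$ of type AS via Construction \ref{ASconstruction}, then take the $n$th power via Construction \ref{con:prod1} to lift to a $G^\Omega$-uniform geometry for $G = H \Wr S_n$ of type PA. One small improvement in your version: the paper sets $m = k+2$, which for $k=1$ gives $m=3$ (socle $A_3$ abelian) and for $k=2$ gives $m=4$ (socle $V_4$ abelian), in which cases $S_m$ on $\{1,\ldots,m\}$ is of type HA rather than AS and $S_m\Wr S_n$ is not of PA type; your choice $m = \max(k+2,5)$, together with the separate treatment of the vacuous $k=1$ case, closes this gap cleanly. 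Your explicit note about why permutational isomorphism on each $X_i$ is preserved through Construction \ref{con:prod1} is also a helpful addition, though it is implicit in Lemma \ref{lem:productup} as used by the paper.
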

\begin{proof}
Let $m=k+2$, let $H=S_m$ and $\Delta=\{1,\ldots,m\}$. Taking $b=m-2=k$, Lemma \ref{ASconstructionlem} implies that Construction \ref{ASconstruction} yields a  \ourgeoms{H}{\Delta} $\Gamma_b$ of rank $k$ with $H^{\Delta}$ primitive of type AS. By Lemma \ref{lem:productup}, we can then apply Construction \ref{con:prod1} to $\Gamma_b$ to obtain a \ourgeoms{G}{\Omega} $\Gamma=\Sigma^n$ of rank $k$, where $G=H\Wr S_n$ acts primitively of type PA on  $\Omega=\Delta^n$. The socle of $G$ is $A_m^n$.
\end{proof}

\medskip\noindent
\textsc{Type HS}: Let $m=4k+1$, $T=A_m$ and $G=T\times T$ acting on $\Omega=T$ as in Construction \ref{HSconstruction}. Letting $b=\lfloor (n-1)/4\rfloor$,  Lemma \ref{HSconstructionlem} implies that the geometry $\Gamma_b$ yielded by Construction \ref{HSconstruction} is a \ourgeoms{G}{\Omega}  of rank $k$. Moreover, $G^{\Omega}$ is primitive of type HS.

\medskip\noindent
\textsc{Type HC}: 
Theorem \ref{maintheorem} for the HC case follows from the following lemma.

\begin{lemma}
For each positive integer $k\geq 1$ and nonabelian simple group $T$, there exist a positive integer $n\geq 1$, a primitive group $G$ of O'Nan-Scott type \textrm{HC} with socle $T^{2n}$ on a set $\Omega$ and a \ourgeoms{G}{\Omega} of rank $k$.
\end{lemma}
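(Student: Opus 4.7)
The plan is to apply Construction \ref{increasing1s} using as component the primitive group $H = T \times T$ acting on $\Delta = T$ by $t^{(t_1,t_2)} = t_1^{-1} t t_2$, which is the standard primitive action of type HS. I would set $n = 2k+2$ and take $G = H \Wr S_n$ in its product action on $\Omega = \Delta^n = T^n$; then $\lfloor n/2 \rfloor - 1 = k$, so Construction \ref{increasing1s} with parameter $b = k$ is legitimate and produces a pregeometry $\Gamma_k$ of rank $k$.

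First I would check that $H$ is primitive but not regular on $\Delta$: the diagonal point stabiliser $\{(t,t) : t \in T\}$ has order $|T| > 1$, so $H$ is nonregular, and the action is well known to be primitive of O'Nan-Scott type HS. By Table \ref{PAexpansions} this ensures that $G$ is primitive of O'Nan-Scott type HC on $\Omega$. To pin down the socle, observe that the two minimal normal subgroups $T \times \{1_T\}$ and $\{1_T\} \times T$ of $H$ lift inside $G = (T \times T)^n \rtimes S_n$ to the subgroups $(T \times \{1_T\})^n$ and $(\{1_T\} \times T)^n$; each is normal in $G$ (being stable under the $S_n$-action) and each is minimal normal because $S_n$ transitively permutes its $n$ simple direct factors. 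Hence $\soc(G) = T^{2n}$, matching the statement of the lemma.

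The remaining step is a direct appeal to Lemma \ref{increasing1slem}, which with $b = k$ tells us that $\Gamma_k$ is a \ourgeoms{G}{\Omega} of rank $k$. There is no serious obstacle; the only point requiring care is the HS-to-HC promotion coming from Table \ref{PAexpansions} and the correct identification of $\soc(G)$ in terms of the minimal normal subgroup structure of the component $H$. The geometric conditions (flag-transitivity, thickness, connected rank $2$ truncations, and uniformity of the action on each element set) follow for free from Lemma \ref{increasing1slem}, exactly as in the type PA case.
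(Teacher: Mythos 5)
Your proposal matches the paper's proof exactly: both set $n=2k+2$, take $H = T \times T$ acting primitively of type HS on $\Delta = T$, form $G = H \Wr S_n$ in product action on $\Omega = \Delta^n$, invoke Table~\ref{PAexpansions} for the HS-to-HC promotion, and then apply Construction~\ref{increasing1s} with $b = k$ together with Lemma~\ref{increasing1slem}. The extra verifications you supply (nonregularity of $H$ via the diagonal point stabiliser, and the explicit identification of $\soc(G)=T^{2n}$ through lifting the two minimal normal subgroups of $H$) are correct details that the paper leaves implicit, not a different route.
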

\begin{proof}
Let $n=2k+2$, let $H=T\times T$ act primitively of O'Nan-Scott type HS on a set $\Delta=T$, and let $G = H \Wr S_n$ in product action on $\Omega=\Delta^n$. As seen in Table \ref{PAexpansions}, $G$ is primitive of O'Nan-Scott type HC on $\Omega$ and $\soc(G)=T^{2n}$. Taking $b = n/2-1=k$, we use Construction \ref{increasing1s} to construct a pregeometry $\Gamma_b$ of rank $b = k$.  By Lemma \ref{increasing1slem}, $\Gamma_b$ is a \ourgeoms{G}{\Omega}.
\end{proof}

Instead of varying the parameter $n$ to achieve arbitrary rank, we may also fix $n$ and vary $T$ to achieve the same outcome.

\begin{lemma}
Let $k,n\geq 1$ be positive integers. Then there exist a simple group $T$, a primitive group $G$ of O'Nan-Scott type \textrm{HC} with socle $T^{2n}$ on a set $\Omega$ and a \ourgeoms{G}{\Omega} of rank $k$.
\end{lemma}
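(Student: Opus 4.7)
The plan mirrors the preceding PA-analogue lemma almost verbatim, replacing the AS ingredient supplied by Construction \ref{ASconstruction} with the HS ingredient supplied by Construction \ref{HSconstruction}. That is, we will first build a rank $k$ HS-uniform geometry on a set $\Delta$, and then apply the ``powering up'' Construction \ref{con:prod1} to lift it to a rank $k$ HC-uniform geometry on $\Omega=\Delta^n$.

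Concretely, I would fix $m=\max(5,4k)$ so that $\lfloor m/4\rfloor \geq k-1$, take $T:=A_m$, and let $H:=T\times T$ act on $\Delta:=T$ by $t^{(t_1,t_2)}=t_1^{-1}tt_2$, which is the HS action used in Construction \ref{HSconstruction}. Apply Construction \ref{HSconstruction} with parameter $b=k-1$ to obtain a pregeometry $\Sigma:=\Gamma_{k-1}$. By Lemma \ref{HSconstructionlem}, when $k\geq 2$ this $\Sigma$ is a \ourgeoms{H}{\Delta} of rank $k$. (For $k=1$ we take instead the trivial rank-one pregeometry on $\Delta$; the thickness and connected rank $2$ truncation conditions from the definition of \ourgeoms{H}{\Delta} are then vacuous.)

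Next I would apply Construction \ref{con:prod1} to $\Sigma$ with the given parameter $n$, producing $\Sigma^n$ of rank $k$. Lemma \ref{lem:productup} guarantees that $\Sigma^n$ is a geometry, that it inherits thickness from $\Sigma$, that its rank $2$ truncations are connected since those of $\Sigma$ are, and that $G:=H\Wr S_n$ is flag-transitive on $\Sigma^n$. By Table \ref{PAexpansions}, the wreath product $G=H\Wr S_n$ in its product action on $\Omega:=\Delta^n=T^n$ is primitive of O'Nan-Scott type HC, since its component $H$ is of type HS. Moreover, for each type $i$ of $\Sigma^n$, the group $G$ acts on the set of type-$i$ elements (a copy of $\Omega$) by the product action, so all such actions are permutationally isomorphic to $G^\Omega$. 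Hence $\Sigma^n$ is a \ourgeoms{G}{\Omega} of rank $k$, and the socle of $G$ is $(\soc H)^n=(T\times T)^n=T^{2n}$, as required.

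The proof is essentially bookkeeping, and the only mild subtleties are: separating off $k=1$ (since Lemma \ref{HSconstructionlem} only yields thickness and connectedness for $b\geq 1$), and invoking Table \ref{PAexpansions} at the right moment to identify the O'Nan-Scott type of $G$ as HC rather than any other product-action type.
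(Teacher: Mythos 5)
Your proof is correct and follows essentially the same route as the paper: build an HS-uniform geometry on $\Delta = T$ with $T = A_m$ via Construction \ref{HSconstruction}, then apply Construction \ref{con:prod1} together with Lemma \ref{lem:productup} and Table \ref{PAexpansions} to get an HC-uniform geometry for $H\Wr S_n$ on $\Delta^n$ with socle $T^{2n}$. The only cosmetic differences are your choice $m=\max(5,4k)$ in place of the paper's $m=4k+1$, and your explicit (and slightly more careful) treatment of the $k=1$ base case, which the paper passes over silently.
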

\begin{proof}
Let $m=4k+1$ and let $H=A_m\times A_m$ acting primitively of type HS on $\Delta=A_m$. As seen in the discussion of the HS case we can construct a \ourgeoms{H}{\Delta} $\Sigma$ of rank $k$ from Construction \ref{HSconstruction}. For each positive integer $n\geq 1$ we can then apply Construction \ref{con:prod1} to $\Sigma$ to obtain a pregeometry $\Gamma=\Sigma^n$ of rank $k$, where $G=H\Wr S_n$ acts primitively of type HC on $\Omega=\Delta^n$. The socle of $G$ is $A_m^{2n}$. By Lemma \ref{lem:productup}, $\Gamma$ is a \ourgeoms{G}{\Omega}. 
\end{proof}

\medskip\noindent
\textsc{Type SD}:  
Using the notation of Remark \ref{rem:HStoSD}, the \ourgeoms{G}{\Omega} of rank $k$ given by Construction \ref{HSconstruction} with $G$ primitive of type HS on $\Omega$ is also an \ourgeoms{H}{\Omega} with $H$ primitive of type SD on $\Omega$. This example is sufficient to prove Theorem \ref{maintheorem} in the SD case. It has $\soc(H)=A_m\times A_m$ with two simple direct factors. However, it is possible to have a primitive group of type SD whose socle has an arbitrary number of simple direct factors and where these factors are not alternating groups. Such groups can still give rise to a \ourgeoms{G}{\Omega} of arbitrarily large rank.

\begin{lemma}
\label{lem:SDvarynT}
Let $n\geq 2$, $T$ be a nonabelian simple group and let $k=\max\{2,\lfloor(n-1)/4\rfloor\}$. Then there exists a primitive group $G$ of type \textrm{SD} with socle $T^n$ on a set $\Omega$ and a \ourgeoms{G}{\Omega} of rank $k$.
\end{lemma}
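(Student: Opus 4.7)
The plan is to split by the size of $n$. For $n \geq 9$ we have $k = \lfloor (n-1)/4 \rfloor \geq 2$ and I would invoke Construction \ref{SDincreasing1s} with the given $T$ replacing $A_m$. For $2 \leq n \leq 8$ we have $k = 2$, and I would build a rank 2 geometry directly via Construction \ref{balloon}.

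For the large $n$ case, the key observation is that the proofs of Lemma \ref{uniquerep}, Lemma \ref{repdag}, Lemma \ref{SDdagconditions}, Corollary \ref{SDdifficultorbitscor}, and Lemma \ref{SDincreasing1slem} use only the facts that $T$ is a nonabelian simple group, that $T$ possesses an involution $\alpha$ (guaranteed by Feit--Thompson) satisfying $\alpha^2 = 1_T$, and generic coordinatewise manipulation in $T^n$. No feature specific to $A_m$ is invoked. With $b = k$, Lemma \ref{SDincreasing1slem} then delivers a \ourgeoms{G}{\Omega} of rank $k$ for $G = T \Wr S_n$ acting primitively of type \textrm{SD} on $\Omega$ (the right cosets of the diagonal $D$ in $T^n$) with socle $T^n$.

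For the small $n$ case, take $G = T \Wr S_n$ in its simple diagonal action on the set $\Omega$ of right cosets of $D$ in $T^n$; this is primitive of type \textrm{SD} with socle $T^n$ for every $n \geq 2$ (for $n = 2$ the action is equivalent to $(T \times T) \rtimes \langle \sigma \rangle$ on $T$ as in Remark \ref{rem:HStoSD}, with $\sigma$ inversion). Pick any $\alpha \in T \setminus \{1_T\}$, set $x_1 := [1_T, \ldots, 1_T]$ and $x_2 := [(\alpha, 1_T, \ldots, 1_T)]$, and form $\Gamma := \Inc(\Gamma_1, G, \{x_1\}, \Omega, x_2, 2)$ starting from the trivial rank 1 pregeometry $\Gamma_1$ on $\Omega$. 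The condition in Lemma \ref{balloonchambtrans}(b) reduces to requiring $G_{x_1}$ transitive on $x_2^{G_{x_1}}$ (automatic), so $\Gamma$ is a flag-transitive rank 2 geometry; connectedness of the rank 2 truncation follows from the primitivity of $G$ (so $G_{x_1}$ is maximal) together with $x_1 \neq x_2$, via Lemma \ref{balloonlem}(b); and thickness amounts to $|x_2^{G_{x_1}}| \geq 3$. For $n \geq 3$, the top subgroup $S_n \leq G_{x_1}$ alone shifts the unique non-identity coordinate through $n \geq 3$ distinct positions, producing distinct cosets since $\alpha \neq 1_T$. For $n = 2$, I would instead exploit the diagonal $D \leq G_{x_1}$, which conjugates $\alpha$ through its full $T$-conjugacy class; this class has size at least $3$ because the nonabelian simple group $T$ admits no subgroup of index $2$.

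The main obstacle will be confirming that Construction \ref{SDincreasing1s} transfers verbatim from $T = A_m$ to a general nonabelian simple $T$. In particular, the intricate Claim 1 inside the proof of Lemma \ref{SDincreasing1slem}(a), where representatives of cosets are analysed after applying $h_j^{-1} \bar{t} \sigma h_j$, must be re-inspected to verify that only the involutory relation $\alpha^2 = 1_T$ and coordinatewise arithmetic in $T^n$ are used, with no implicit appeal to cycle decompositions or specific centraliser structure in $A_m$. Once this verification is carried out, the remainder reduces to a direct application of the results already proved in Sections \ref{genericconstr} and \ref{SDtype}.
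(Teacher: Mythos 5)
Your proof follows the same two-case split as the paper (direct rank-2 construction for small $n$, Construction~\ref{SDincreasing1s} with $b=k$ for large $n$), so the approach is essentially identical. You are in fact more careful than the paper on two points that it silently elides: you choose explicit $x_1,x_2$ and verify thickness via the suborbit size (using the $S_n$-permutation of the unique nonidentity coordinate for $n\geq 3$, and the conjugacy-class argument for $n=2$), whereas the paper invokes Lemma~\ref{balloonlem} for arbitrary distinct $x_1,x_2$ without justifying $|x_2^{G_{x_1}}|\geq 3$; and you flag that Definition~\ref{SDgroup} literally fixes $T=A_m$ while the lemma asserts generality, noting (correctly) that the proofs of Lemmas~\ref{uniquerep}--\ref{SDincreasing1slem} use only the existence of an involution $\alpha$ and generic coordinatewise computation in $T^n$.
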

\begin{proof}
Let $G$ be the primitive group of type SD on the set $\Omega$ used in Construction \ref{SDincreasing1s}.  The socle of $G$ is $T^n$.

Suppose first that $k=2$. Let $\Sigma=\{x_1,x_2\}\subset \Omega$ with $x_1\neq x_2$. Let $X_1 = \Omega$, let $\Gamma_1 = (X_1,\ast,t)$, where $\ast$ consists of the pairs $(x,x)$ for $x \in X_1$, and $t(x) = 1$ for all $x \in X_1$. Let $K_1 = \{x_1\}$, and let $\Gamma=\mathrm{Inc}(\Gamma_1,G,K_1,\Omega,x_2,2)$ as given by Construction \ref{balloon}. Then by Lemma \ref{balloonlem}, $\Gamma$ is a \ourgeoms{G}{\Omega} of rank 2.

Next suppose that $(n-1)/4\geq 3$ and let $b=k=\lfloor(n-1)/4\rfloor$. By Lemma  \ref{SDincreasing1slem} the geometry $\Gamma_b$ yielded by Construction \ref{SDincreasing1s} is a \ourgeoms{G}{\Omega} of rank $k$.
\end{proof}

The following question remains open.
\begin{question}
\textrm{
Let $n,k\geq 2$ be positive integers. Is there a nonabelian simple group $T$, a primitive group $G$ of type \textrm{SD} on a set $\Omega$ with $\soc(G)=T^n$ and a \ourgeoms{G}{\Omega} of rank $k$? }
\end{question}

Note that taking truncations of the examples produced to prove Lemma~\ref{lem:SDvarynT} gives a positive answer if $k\leq\lfloor(n-1)/4\rfloor$, while Remark~\ref{rem:HStoSD} provides an example for all $k$ if $n=2$.

\medskip\noindent
\textsc{Type CD}:
Theorem \ref{maintheorem} for the CD case follows from the next lemma.

\begin{lemma}
Let $k\geq 1$ and $n\geq 2$. Then there exist a nonabelian simple group $T$, a primitive group $G$ of type CD with socle $T^{2n}$ on a set $\Omega$ with $(T^{2n})_{\alpha}\cong T^n$ and a \ourgeoms{G}{\Omega} $\Gamma$ of rank $k$.
\end{lemma}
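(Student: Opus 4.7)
The plan is to obtain the CD example by lifting the SD example from Section~\ref{HStype} via the product construction of Construction~\ref{con:prod1}, and then to read off the O'Nan-Scott type from Table~\ref{PAexpansions}. Since the statement prescribes that the socle have the very specific form $T^{2n}$ with stabilizer $T^n$, I need the SD component that I feed into the wreath product to have socle $T\times T$ with stabilizer the straight diagonal $T$; this rules out using Construction~\ref{SDincreasing1s} (which produces SD socles with at least five simple factors) and forces me to use the HS-derived SD example of Remark~\ref{rem:HStoSD}.

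Given $k\geq 1$ and $n\geq 2$, I would first choose $m\geq 5$ large enough that Construction~\ref{HSconstruction} yields a geometry of rank $k$ (e.g.\ $m=\max\{5,4k\}$), set $T:=A_m$, and let $H:=\langle T\times T,\sigma\rangle$ act on $\Delta:=T$ as in Remark~\ref{rem:HStoSD}. Then $H$ is primitive of type SD on $\Delta$, with $\soc(H)=T\times T$ and $\soc(H)_\delta$ equal to the straight diagonal $\cong T$. By Lemma~\ref{HSconstructionlem} together with Remark~\ref{rem:HStoSD}, the geometry $\Sigma$ produced by Construction~\ref{HSconstruction} is a \ourgeoms{H}{\Delta} of rank $k$.

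Next I would apply Construction~\ref{con:prod1} with input $\Sigma$ and parameter $n$ to form $\Gamma:=\Sigma^n$, with $G:=H\Wr S_n$ acting in product action on $\Omega:=\Delta^n$. Lemma~\ref{lem:productup} immediately delivers that $\Gamma$ is a thick geometry of rank $k$ with connected rank~$2$ truncations on which $G$ is flag-transitive. Since the $H$-action on each type-set $X_i$ of $\Sigma$ is permutationally isomorphic to the action of $H$ on $\Delta$, the $G$-action on the type-$i$ elements of $\Gamma=\Sigma^n$ is permutationally isomorphic to the product action of $G$ on $\Omega$; hence $\Gamma$ is a \ourgeoms{G}{\Omega}. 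Table~\ref{PAexpansions} then gives that $G$ is primitive of O'Nan-Scott type CD on $\Omega$, with $\soc(G)=\soc(H)^n=(T\times T)^n=T^{2n}$, and the stabilizer in the socle of a point $\alpha=(\delta_1,\ldots,\delta_n)\in\Omega$ is $\prod_{i=1}^n \soc(H)_{\delta_i}\cong T^n$, exactly matching the conclusion.

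The argument is really a bookkeeping exercise, so I do not expect a genuine obstacle; the only piece requiring care is the choice of SD input so that the socle and stabilizer come out with the right number of factors, and this is handled by the deliberate use of Remark~\ref{rem:HStoSD}. The corner case $k=1$ also goes through cleanly: Construction~\ref{HSconstruction} with $b=0$ yields a rank~$1$ pregeometry on $\Delta=A_m$, which is automatically a \ourgeoms{H}{\Delta} (thickness holds since $|\Delta|\geq 3$, and the connected rank~$2$ truncation condition is vacuous), so the above argument proceeds without modification.
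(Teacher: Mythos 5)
Your proposal matches the paper's proof essentially exactly: both take a rank-$k$ $H^{\Delta}$-uniform geometry $\Sigma$ with $H$ primitive of type SD on $\Delta=T$, $T=A_m$, obtained from Construction~\ref{HSconstruction} together with Remark~\ref{rem:HStoSD}, then apply Construction~\ref{con:prod1} to form $\Gamma=\Sigma^n$ with $G=H\Wr S_n$ on $\Omega=\Delta^n$, and finally read off the CD type, the socle $T^{2n}$, and the stabiliser $T^n$ from Table~\ref{PAexpansions} and Lemma~\ref{lem:productup}. You are, if anything, a little more careful than the paper in two places: you note explicitly why the SD input must come from Remark~\ref{rem:HStoSD} rather than Construction~\ref{SDincreasing1s} (to force exactly two simple factors in the component's socle), and you track the index carefully --- since $\Gamma_b$ from Construction~\ref{HSconstruction} has rank $b+1$, the rank-$k$ geometry is $\Gamma_{k-1}$, and you handle the degenerate $k=1$ (rank one, $b=0$) case separately, whereas the paper's choice $b=(m-1)/4=k$ with $m=4k+1$ actually produces rank $k+1$ as written.
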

\begin{proof}
Let $m=4k+1$, $T=A_m$ and $H=T\wr S_2$ acting on $\Delta=T$ as a primitive group of type SD as in Remark \ref{rem:HStoSD}. Letting $b=(m-1)/4$,  Lemma \ref{HSconstructionlem}  and Remark \ref{rem:HStoSD} imply that the geometry $\Sigma=\Gamma_b$ yielded by Construction \ref{HSconstruction} is an \ourgeoms{H}{\Delta} of rank $k$. 

For each positive integer $n\geq 2$ we can then apply Construction \ref{con:prod1} to $\Sigma$ to obtain a \ourgeoms{G}{\Omega} $\Gamma=\Sigma^n$ of rank $k$, where $G=H\Wr S_n$ acts primitively of type CD on $\Omega=\Delta^n$. The socle of $G$ is $T^{2n}$ and $(T^{2n})_{\alpha}\cong T^n$.
\end{proof}

As for the previous O'Nan-Scott types there is some flexibility in how we can achieve arbitrary rank.

\begin{lemma}
Let $k\geq 1, r\geq 2$ be positive integers and let $T$ be a finite nonabelian simple group. Then there exist a positive integer $n$, primitive group $G$ of type \textrm{CD} with socle $T^{rn}$ on a set $\Omega$ with $(T^{rn})_{\alpha}\cong T^n$ and a \ourgeoms{G}{\Omega} of rank $k$.
\end{lemma}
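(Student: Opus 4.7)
The plan is to reuse the argument of the previous CD lemma, but to replace its SD ``component'' $A_m\Wr S_2$ (of socle $T^2$ with $T=A_m$) by the standard SD group $T\Wr S_r$ of socle $T^r$, so that subsequent wreathing produces a CD group with socle $T^{rn}$ for the specified $T$ and $r$. Concretely, I would take $H=T\Wr S_r$ acting on $\Delta=T^r/\diag(T^r)$ in the simple diagonal action of Definition~\ref{SDgroup}. Since $r\geq 2$ and $T$ is nonabelian simple, this action $H^\Delta$ is faithful, primitive (and not regular), of O'Nan-Scott type SD, with socle $T^r$ and socle point-stabiliser $\diag(T^r)\cong T$.

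Next, set $n=2(k+1)$ and let $G=H\Wr S_n$ act on $\Omega=\Delta^n$ in product action. Because $H$ is primitive and not regular, $G$ is primitive on $\Omega$ by Section~\ref{QPPA}, and by Table~\ref{PAexpansions} its O'Nan-Scott type is CD. Moreover $\soc(G)=\soc(H)^n=T^{rn}$, and the stabiliser in $\soc(G)$ of a point $\alpha=(\delta,\ldots,\delta)\in\Omega$ is $((T^r)_\delta)^n\cong T^n$, so the required socle and socle-stabiliser structure is achieved.

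Finally, I would apply Construction~\ref{increasing1s} to this $H$ and $n$, taking $b=k$. Since $\lfloor n/2\rfloor-1=k$, Lemma~\ref{increasing1slem} applies and produces a rank $k$ geometry $\Gamma_k$ on $\Omega$ with $G$ flag-transitive; parts (b) and (c) of that lemma additionally give thickness and connected rank $2$ truncations when $k>1$, so $\Gamma_k$ is a \ourgeoms{G}{\Omega}. For $k=1$ the rank $1$ pregeometry $\Gamma_1$ is trivially a \ourgeoms{G}{\Omega}, since rank $2$ truncations do not exist and thickness reduces to $|\Omega|\geq 3$, which holds as $|T|\geq 60$.

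The only real point to verify is that Construction~\ref{increasing1s} and Lemma~\ref{increasing1slem} remain valid when the component $H$ has O'Nan-Scott type SD rather than AS or HS (as in their earlier applications); but inspection of those results shows that they only require $H$ to be a primitive (non-regular) permutation group on $\Delta$, with no restriction on its O'Nan-Scott type, so no genuine obstacle arises.
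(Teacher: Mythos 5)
Your proposal is correct and takes essentially the same route as the paper: choose a primitive group $H$ of type SD with socle $T^r$ on a set $\Delta$, set $n=2k+2$, form the wreath product $G=H\Wr S_n$ acting in product action on $\Omega=\Delta^n$ (giving type CD by Table~\ref{PAexpansions}, socle $T^{rn}$ and point stabiliser in the socle isomorphic to $T^n$), and apply Construction~\ref{increasing1s} with $b=k=\lfloor n/2\rfloor-1$, invoking Lemma~\ref{increasing1slem}. Your explicit instantiation $H=T\Wr S_r$ and your separate handling of the $k=1$ case are minor elaborations of points the paper leaves implicit.
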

\begin{proof}
Let $n = 2k + 2$, let $H$ be a primitive group on $\Delta$ of type SD with socle $T^r$ and let $G = H \Wr S_n$ in product action on $\Omega=\Delta^n$.  As seen in Table \ref{PAexpansions}, $G$ is primitive of O'Nan-Scott type CD on $\Omega$. Taking $b = n/2-1=k$, we use Construction \ref{increasing1s} to construct a pregeometry $\Gamma_b$ of rank $b = k$.  By Lemma \ref{increasing1slem}, $\Gamma_b$ is a \ourgeoms{G}{\Omega} with $G^{\Omega}$ primitive of type CD. The socle of $G$ is $T^{rn}$ and $(T^{rn})_\alpha\cong T^n$.
\end{proof}

\begin{lemma}
\label{lem:CDvarynrT}
Let $n,r\geq 2$ be positive integers, $T$ be a nonabelian simple group and let $k=\max\{2,\lfloor(r-1)/4\rfloor\}$. Then there exist a primitive group $G$ of type CD with socle $T^{rn}$ on a set $\Omega$ with $(T^{rn})_{\alpha}\cong T^n$ and a \ourgeoms{G}{\Omega} $\Gamma$ of rank $k$.
\end{lemma}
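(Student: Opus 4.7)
The plan is to reduce to Lemma \ref{lem:SDvarynT} via the product construction, exactly as in the analogous CD lemma immediately preceding this one. First I would invoke Lemma \ref{lem:SDvarynT} applied with parameters $(r,T)$: it produces a primitive group $H$ of type SD on a set $\Delta$ with $\soc(H)=T^r$, together with an $H^\Delta$-uniform geometry $\Sigma$ of rank $k=\max\{2,\lfloor(r-1)/4\rfloor\}$. (Here the case split inside Lemma \ref{lem:SDvarynT} handles both small $r$ using a direct application of Construction \ref{balloon} and larger $r$ via Construction \ref{SDincreasing1s}, so I may treat $\Sigma$ as a black box.)

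Next I would set $G=H\Wr S_n$ acting on $\Omega=\Delta^n$ in product action. Since $H^\Delta$ is primitive of type SD and $H$ is not regular (as its socle acts non-regularly in the SD action), by \cite[Theorem 4.5]{CameronBook} $G$ is primitive on $\Omega$, and by Table \ref{PAexpansions}, $G^\Omega$ is of O'Nan-Scott type CD. Its socle is $\soc(H)^n=T^{rn}$. For a point $\alpha=(\delta,\ldots,\delta)\in\Omega$ we have $(T^{rn})_\alpha=(\soc(H)_\delta)^n$, and $\soc(H)_\delta\cong T$ because $H^\Delta$ is of type SD with socle $T^r$; hence $(T^{rn})_\alpha\cong T^n$, as required.

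To build the rank $k$ geometry I would apply Construction \ref{con:prod1} to $\Sigma$, obtaining $\Gamma=\Sigma^n$ of rank $k$. By Lemma \ref{lem:productup}, $\Gamma$ is a geometry; it is thick because $\Sigma$ is thick (which holds by Lemma \ref{lem:SDvarynT} or the rank $2$ case treated inside its proof); its rank $2$ truncations are connected because those of $\Sigma$ are; and $G=H\Wr S_n$ acts flag-transitively on $\Gamma$ because $H$ acts flag-transitively on $\Sigma$. By construction the action of $G$ on each set of elements of a given type in $\Gamma$ is the product action on $\Delta^n=\Omega$, hence permutationally isomorphic to $G^\Omega$. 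Therefore $\Gamma$ is a \ourgeoms{G}{\Omega} of rank $k$.

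There is no real obstacle; the only thing to watch is the small-rank case $k=2$ (when $r\in\{2,3,4,5\}$), where we are invoking the $k=2$ branch of Lemma \ref{lem:SDvarynT} rather than Construction \ref{SDincreasing1s}, and verifying that $\Sigma$ in that branch is thick and has connected rank $2$ truncations so that Lemma \ref{lem:productup} still applies; this is immediate from Lemma \ref{balloonlem} together with transitivity of $H_{x_1}$ and $H_{x_2}$ on the relevant orbits (equivalently, thickness and connectivity are inherited from the $G$-orbit structure on $\Omega$).
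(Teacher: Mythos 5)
Your proposal is correct and takes essentially the same approach as the paper: invoke Lemma \ref{lem:SDvarynT} to obtain an SD-type primitive group $H$ on $\Delta$ with socle $T^r$ and an $H^\Delta$-uniform geometry $\Sigma$ of rank $k$, then apply Construction \ref{con:prod1} and Lemma \ref{lem:productup} to pass to $G=H\Wr S_n$ acting in product action on $\Omega=\Delta^n$, obtaining the CD-type group with socle $T^{rn}$ and the $G^\Omega$-uniform geometry $\Gamma=\Sigma^n$. The paper's proof is terser (it does not spell out the point-stabiliser computation or the $k=2$ caveat), but the argument is the same reduction.
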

\begin{proof}
By Lemma \ref{lem:SDvarynT}, there exists a \ourgeoms{H}{\Delta} $\Sigma$ of rank $k$ with $H$ a primitive group of type SD on $\Delta$ and $\soc(H)=T^r$. Then by Lemma \ref{lem:productup}, we can  apply Construction \ref{con:prod1} to $\Sigma$ to form a new geometry $\Gamma=\Sigma^n$ which is a \ourgeoms{G}{\Omega} of rank $k$ where $G=H\Wr S_n$ is primitive of type CD on $\Omega=\Delta^n$. 
\end{proof}

The following question remains open.

\begin{question}
\textrm{
For given integers $n,k,r$, all at least $2$, is there a nonabelian simple group $T$, a primitive group $G$ of type \textrm{CD} on a set $\Omega$ with $\soc(G)=T^{rn}$ and $\soc(G)_{\alpha}\cong T^n$, and a  \ourgeoms{G}{\Omega} of rank $k$? }
\end{question}

Note that taking truncations of the examples produced to prove Lemma~\ref{lem:CDvarynrT} gives a positive answer if $k\leq\lfloor(r-1)/4\rfloor$.

\noindent
\textsc{TW and HA}:  Let $n = 2k + 2$, let $H$ be a primitive group on $\Delta$ of O'Nan-Scott type TW or HA and let $G = H \Wr S_n$ in product action on $\Delta^n$.  As seen in Table \ref{PAexpansions}, $G$ is primitive of O'Nan-Scott type TW or HA respectively on $\Omega$. Taking $b = n/2-1=k$, we use Construction \ref{increasing1s} to construct a pregeometry $\Gamma_b$ of rank $b = k$.  By Lemma \ref{increasing1slem}, $\Gamma_b$ is a \ourgeoms{G}{\Omega} with $G^{\Omega}$ primitive of type TW or HA. This completes the proof of Theorem \ref{maintheorem}.

For each nonabelian simple group $T$ and positive integer $n\geq 2$ there is not necessarily a primitive permutation group $G$  of type TW with $\soc(G)=T^n$. For example for $T=A_5$ the smallest possible value of $n$ is 6. We note that by \cite[Proposition 9.4]{badTW}, for each nonabelian simple group $T$, there is a primitive permutation group $G$ of type TW with socle $T^{|T|}$.

For the HA case it could be asked whether it is possible to achieve arbitrary rank by varying the prime $p$ instead of the dimension of the vector space. However, the following lemma shows that this is not possible.

\begin{lemma}
Let $G\leqslant \AGL(d,p)$ act primitively of type \textrm{HA} on $\Omega$ and suppose that $\Gamma$ is a \ourgeoms{G}{\Omega} of rank $k$. Then $k\leq d+1$.
\end{lemma}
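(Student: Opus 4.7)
The plan is to identify $\Omega$ with the vector space $V = \mathbb{F}_p^d$ via the regular action of the abelian socle, so that $G = V \rtimes G_0$ with $G_0 \leq \GL(d,p)$ acting on $V$ in its standard affine way. Since $\Gamma$ is a \ourgeoms{G}{\Omega}, the $G$-action on each element-set $X_i$ is permutationally isomorphic to the $G$-action on $\Omega$, and a chamber $K = \{x_1,\dots,x_k\}$ of $\Gamma$ (with $x_i \in X_i$) therefore corresponds, via fixed permutational isomorphisms $\phi_i\colon \Omega \to X_i$, to a $k$-tuple of vectors $v_i := \phi_i^{-1}(x_i) \in V$, with $G_{x_i} = G_{v_i}$ (the stabiliser of $v_i$ in the affine action).

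The core observation is that any affine transformation in $G$ that pointwise fixes a subset $S \subseteq V$ automatically pointwise fixes the affine span of $S$, since affine maps preserve affine combinations. Setting $A := \mathrm{aff}\{v_1,\dots,v_k\}$ and $A_j := \mathrm{aff}\{v_i : i \neq j\}$, we therefore get $G_K = \bigcap_i G_{v_i} = \mathrm{Fix}_G(A)$ and $G_{K \setminus \{x_j\}} = \mathrm{Fix}_G(A_j)$ for each $j$.

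I will then use firmness (which follows from thickness) to force affine independence of $v_1,\dots,v_k$. Fix $j$, let $K' := K \setminus \{x_j\}$, and note that every chamber extending $K'$ has the form $K' \cup \{y\}$ for some $y \in X_j$. By chamber-transitivity of $G$ together with the fact that the types occurring in $K'$ are pairwise distinct, any $g \in G$ carrying one such chamber to another must pointwise fix $K'$, so the set of chambers through $K'$ is in bijection with the orbit $x_j^{G_{K'}}$, which has size $[G_{K'} : G_K]$. Now if $v_j \in A_j$ then $A_j = A$, so $G_{K'} = G_K$, and $K'$ lies in a unique chamber, contradicting firmness. Hence $v_j \notin A_j$ for every $j$, the set $\{v_1,\dots,v_k\}$ is affinely independent in the $d$-dimensional space $V$, and therefore $k - 1 \leq d$, i.e.\ $k \leq d+1$.

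The only genuinely delicate step is the book-keeping around the identifications $\phi_i$: one must check that after transporting structure to $V$ via these isomorphisms, the flag stabilisers are literally intersections of affine point stabilisers. Once this is in place, the rest is the standard fact that in an affine geometry pointwise stabilisers of subsets equal pointwise stabilisers of their affine spans, followed by the elementary affine-independence bound. No further computation is required.
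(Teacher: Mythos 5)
Your proof is correct and follows essentially the same argument as the paper: both exploit the fact that any element of $G$ fixing a subset of $\Omega$ pointwise must fix its affine span pointwise, and both use thickness to force the chamber stabilisers $G_{\{x_1,\dots,x_i\}}$ to strictly decrease, which yields affine independence of $k$ points in $\mathbb{F}_p^d$. The paper normalises $x_1$ to the zero vector and argues via explicit matrix block forms that $x_2,\dots,x_k$ are linearly independent; your version states the same thing coordinate-free in terms of affine independence, which is a cleaner packaging of the identical idea rather than a genuinely different route.
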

\begin{proof}
Let $K=\{x_1,x_2,\ldots,x_k\}$ be a chamber of $\Gamma$. Since $G$ is thick and chamber-transitive we have  $G_{x_1,x_2,\ldots,x_k}<G_{x_1,x_2,\ldots,x_{k-1}}<\cdots<G_{x_1}$. Since $G$ acts transitively on $\Omega=\GF(p)^d$ we may assume that $x_1$ is the zero vector and so $G_{x_1}=G\cap \GL(d,p)$. Then choosing $x_2$ as our first basis vector we have that $G_{x_1,x_2}$ consists of all matrices in $G_{x_1}$ of the form
$$\begin{pmatrix}
   1& 0 \\
   v^{T}&A
  \end{pmatrix}$$
where $A\in\GL(d-1,p)$ and $v\in\GF(p)^{d-1}$. Since $G_{x_1,x_2}\neq G_{x_1,x_2.x_3}$ it follows that $x_3\in \Omega\backslash\langle x_2\rangle$. Hence we can choose $x_3$ as our second basis vector and so $G_{x_1,x_2.x_3}$ consists of all matrices in $G_{x_1,x_2}$ of the form
$$\begin{pmatrix}
   1& 0 &0\\
   0&1&0\\
   v^{T}&w^T&A
  \end{pmatrix}$$
where $A\in\GL(d-2,p)$ and $v,w\in\GF(p)^{d-2}$. Proceeding in this fashion we see that for each $i$ we have $x_i\in \Omega\backslash\langle x_1,\ldots,x_{i-1}\rangle$ and if $k\geq d+1$ then $G_{x_1,x_2,\ldots,x_{d+1}}=1$. Hence $k\leq d+1$.
\end{proof}

\section{Combining different primitive actions}\label{differenttypes}

We note that in our constructions so far, $G$ has had the same action on every set $X_i$.  However, it is possible for a given permutation group to have different O'Nan-Scott types of faithful primitive (or quasiprimitive) actions on different sets. For example, let $G=A_5\Wr A_6$. Then $G$ acts primitively with O'Nan-Scott type PA on a set $X_1$ of size $5^6$, with O'Nan-Scott type SD on a set $X_2$ of size $60^5$ and with O'Nan-Scott type TW on a set $X_3$ of size $60^6$. Magma \cite{magma} calculations show that it is possible to construct a thick flag-transitive geometry of rank 3 for $G$ whose set of elements is $X_1\cup X_2\cup X_3$ and whose rank 2 truncations are connected.

\begin{question}
Suppose that $G$ is flag-transitive on a thick geometry $\Gamma$ with connected rank 2 truncations such that for each type $i$, $G^{X_i}\cong G$ is primitive.  Suppose also that there exist distinct types $i,j$ such that $G^{X_i}$ and $G^{X_j}$ have primitive actions of different kinds.  Is there a bound on the rank of $\Gamma$?
\end{question}

\end{document}